\numberwithin{equation}{section}
\newtheorem{theorem}{Theorem}[section]
\newtheorem{proposition}[theorem]{Proposition}
\newtheorem{corollary}[theorem]{Corollary}
\theoremstyle{definition}
\newtheorem{remark}[theorem]{Remark}
\newtheorem{example}[theorem]{Example}
\newtheorem{definition}[theorem]{Definition}
\newcommand{\PP}{\mathbb{P}}
\newcommand{\QQ}{\mathbb{Q}}
\newcommand{\TT}{\mathbb{T}}
\newcommand{\ZZ}{\mathbb{Z}}
\newcommand{\bfb}{\mathbf{b}}
\newcommand{\bfc}{\mathbf{c}}
\newcommand{\bfd}{\mathbf{d}}
\newcommand{\bfe}{\mathbf{e}}
\newcommand{\bfg}{\mathbf{g}}
\newcommand{\bfp}{\mathbf{p}}
\newcommand{\bfx}{\mathbf{x}}
\newcommand{\Lbfx}{{}^L\mathbf{x}}
\newcommand{\Rbfx}{{}^R\mathbf{x}}
\newcommand{\bfy}{\mathbf{y}}
\newcommand{\Lbfy}{{}^L\bfy}
\newcommand{\Rbfy}{{}^R\bfy}
\newcommand{\bfz}{\mathbf{z}}
\newcommand{\bfZ}{\mathbf{Z}}
\newcommand{\LB}{{}^L\!B}
\newcommand{\RB}{{}^R\!B}
\newcommand{\cA}{\mathcal{A}}
\newcommand{\LcA}{{}^L\!\!\mathcal{A}}
\newcommand{\RcA}{{}^R\!\!\mathcal{A}}
\newcommand{\cF}{\mathcal{F}}
\newcommand{\bin}{{\operatorname{bin}}}
\renewcommand{\sf}{{\operatorname{sf}}}
\newcommand{\Trop}{\operatorname{Trop}}
\newcommand{\erase}[1]{{}}
\title{Companion cluster algebras to a generalized cluster algebra}
\author{Tomoki Nakanishi}
\author{Dylan Rupel}
\begin{document}
\begin{abstract}
 We study the $c$-vectors, $g$-vectors, and $F$-polynomials for generalized cluster algebras satisfying a normalization condition and a power condition recovering classical recursions and separation of additions formulas.  We establish a relationship between the $c$-vectors, $g$-vectors, and $F$-polynomials of such a generalized cluster algebra and its (left- and right-) companion cluster algebras.  Our main result states that the cluster variables and coefficients of the (left- and right-) companion cluster algebras can be recovered via a specialization of the $F$-polynomials.
\end{abstract}
\maketitle

\section{Introduction}
Cluster algebras have risen to prominence as the correct algebraic/combinatorial language for describing a certain class of recursive calculations.  These recursions appear in many forms across various disciplines including Poisson geometry \cite{gekhtman-shapiro-vainshtein}, combinatorics \cite{musiker-propp}, hyperbolic geometry \cite{fock-goncharov,fomin-shapiro-thurston,musiker-schiffler-williams}, representation theory of associative algebras \cite{caldero-chapoton,caldero-keller,bmrrt,rupel1,qin,rupel2}, mathematical physics \cite{eager-franco}, and quantum groups \cite{kimura,geiss-leclerc-schroer,kimura-qin,berenstein-rupel}.  In the current standard theory a product of cluster variables, one known and one unknown, is equal to a binomial in other known quantities.  Recently examples have emerged in the context of hyperbolic orbifolds \cite{chekhov-shapiro}, exact WKB analysis \cite{iwaki-nakanishi}, and quantum groups \cite{gleitz,berenstein-greenstein-rupel} that require a more general setup: these \emph{binomial} exchange relations should be replaced by \emph{polynomial} exchange relations.

The general study of such \emph{generalized} cluster algebras was initiated by Chekhov and Shapiro \cite{chekhov-shapiro} where an analogue of the classical Laurent Phenomenon was established.  Following these developments, the first author \cite{nakanishi} studied the analogues of $c$-vectors, $g$-vectors, and $F$-polynomials for a class of generalized cluster algebras satisfying a \emph{normalization condition} and a \emph{reciprocity condition}.  In that work, relationships between these $c$- and $g$-vectors with the corresponding quantities for certain \emph{companion} cluster algebras were established.  Our goal in the present paper is to extend these results to the case when the reciprocity condition is replaced by a weaker \emph{power condition} and to clarify the corresponding relationships between $F$-polynomials, $x$-variables, and $y$-variables.  The main message of this note, continuing from \cite{nakanishi}, is as follows: the generalized cluster algebras are as good and natural as ordinary cluster algebras.  Also in this direction, analogues of the classical greedy bases from \cite{lee-li-zelevinsky} have been constructed for rank 2 generalized cluster algebras by the second author \cite{rupel3}.

In order to state our main theorem we will need to fix some notation.  A cluster algebra $\cA(\bfx,\bfy,B)\subset\cF$ is defined recursively from the initial data of a seed $(\bfx,\bfy,B)$ where $\bfy=(y_1,\ldots,y_n)$ is a collection of elements from a semifield $\PP$, $\bfx=(x_1,\ldots,x_n)$ is a collection of algebraically independent elements in a degree $n$ purely transcendental extension $\cF$ of $\QQ\PP$ (in particular, we may identify $\cF$ with the rational function field $\QQ\PP(\bfx)$) where $\QQ\PP$ is the field of fractions of the group ring $\ZZ\PP$, and $B=(b_{ij})$ is a skew-symmetrizable $n\times n$ matrix.  A generalized cluster algebra $\cA=\cA(\bfx,\bfy,B,\bfZ)\subset\cF$ requires the additional data of a collection of \emph{exchange polynomials} $\bfZ=(Z_1,\ldots,Z_n)$ where
\[Z_i(u)=z_{i,0}+z_{i,1}u+\cdots+z_{i,d_i-1}u^{d_i-1}+z_{i,d_i}u^{d_i}\]
with each $z_{i,s}\in\PP$ and $z_{i,0}=z_{i,d_i}=1$.  Write $\bfz=(z_{i,s})$ ($1\le i\le n$, $0\le s\le d_i$).

Write $D=(d_i\delta_{ij})$ for the diagonal $n\times n$ matrix. Denote by $\bfx^{1/\bfd}$ the collection $(x_1^{1/d_1},\ldots,x_n^{1/d_n})$ in the extension field $\QQ\PP(\bfx^{1/\bfd})$ of $\cF$.  Define the \emph{left-companion cluster algebra} $\LcA$ of $\cA$ to be the cluster algebra $\cA(\bfx^{1/\bfd},\bfy,DB)\subset\QQ\PP(\bfx^{1/\bfd})$.  Write $(\Lbfx^t,\Lbfy^t,\LB^t)$ for the seed associated to vertex $t\in\TT_n$ in the construction of $\LcA$ and denote by ${}^L\!\bfc^t_j$, ${}^L\!\bfg^t_j$, and ${}^L\!F^t_j$ the $c$-vectors, $g$-vectors, and $F$-polynomials of $\LcA$.

Let $\bfz^\bin=(z_{i,s}^\bin)$ where $z_{i,s}^\bin={d_i\choose s}$.  Then we write $x_i^t\big|_{\bfz=\bfz^\bin}\in\cF$ and $y_j^t\big|_{\bfz=\bfz^\bin}\in\PP$ for the variables obtained by applying equations \eqref{th:GCA x-variables} and \eqref{th:GCA y-variables} respectively using the specialized $F$-polynomials $F^t_j(\bfy,\bfz^{\bin})$ in place of the generic $F$-polynomials $F^t_j(\bfy,\bfz)$.  Our first main result is the following.
\begin{theorem}\label{th:left companions}
 We have $x_i^t\big|_{\bfz=\bfz^\bin}=\big({}^L\!x^t_i\big)^{d_i}$ and $y_j^t\big|_{\bfz=\bfz^\bin}={}^L\!y^t_i$.
\end{theorem}

Denote by $\bfy^\bfd$ for the collection $(y_1^{d_1},\ldots,y_n^{d_n})$ in $\PP$.  Define the \emph{right-companion cluster algebra} $\RcA$ of $\cA$ to be the cluster algebra $\cA(\bfx,\bfy^\bfd,BD)\subset\QQ\PP(\bfx)$.  Write $(\Rbfx^t,\Rbfy^t,\RB^t)$ for the seed associated to vertex $t\in\TT_n$ in the construction of $\RcA$ (see Section~\ref{sec:CA} for details).

Write $x_i^t\big|_{\bfz=\boldsymbol{0}}\in\cF$ and $y_j^t\big|_{\bfz=\boldsymbol{0}}\in\PP$ for the variables obtained by applying equations \eqref{th:GCA x-variables} and \eqref{th:GCA y-variables} respectively using the specialized $F$-polynomials $F^t_j(\bfy,\boldsymbol{0})$ in place of the generic $F$-polynomials $F^t_j(\bfy,\bfz)$.  Our second main result is the following.
\begin{theorem}\label{th:right companions}
 We have $x_i^t\big|_{\bfz=\boldsymbol{0}}={}^Rx^t_i$ and $\big(y_j^t\big|_{\bfz=\boldsymbol{0}}\big)^{d_j}={}^Ry^t_j$.
\end{theorem}

\section{Cluster Algebras}\label{sec:CA}
A \emph{semifield} is a multiplicative abelian group $(\PP,\cdot)$ together with an auxiliary addition $\oplus:\PP\times\PP\to\PP$ which is associative, commutative and satisfies the usual distributivity with the multiplication of $\PP$.  Write $\ZZ\PP$ for the group ring of $\PP$.  Since $\PP$ is necessarily torsion-free (see e.g. \cite[Sec. 5]{fomin-zelevinsky1}), $\ZZ\PP$ is a domain \cite[Sec. 2]{fomin-zelevinsky1} and we write $\QQ\PP$ for its field of fractions.  There are two main examples of semifields that will be most relevant for our purposes.

\begin{example}\mbox{}
 \begin{enumerate}
  \item The \emph{universal semifield} $\QQ_\sf(y_1,\ldots,y_n)$ is the set of rational functions in the variables $y_1,\ldots,y_n$ which can be written in a subtraction-free form.  Addition and multiplication in the universal semifield are the ordinary operations on rational functions.  The semifield $\QQ_\sf(y_1,\ldots,y_n)$ is universal in the following sense.  Each element of $\QQ_\sf(y_1,\ldots,y_n)$ can be written as a ratio of positive polynomials in $\ZZ_{\ge0}[y_1,\ldots,y_n]$ so that for any other semifield $\PP$ there is a specialization homomorphism $\QQ_\sf(y_1,\ldots,y_n)\to\PP$, given by $y_i\mapsto p_i$ and $1\mapsto1$, which respects the semifield structure for any choice of $p_1,\ldots,p_n\in\PP$.
  \item The \emph{tropical semifield} $\Trop(y_1,\ldots,y_n)$ is the free (multiplicative) abelian group generated by $y_1,\ldots,y_n$ with auxiliary addition $\oplus$ defined by 
  \[\stackrel[j=1]{n}{\prod} y_j^{a_j}\oplus\stackrel[j=1]{n}{\prod} y_j^{b_j}=\stackrel[j=1]{n}{\prod} y_j^{\min(a_j,b_j)}.\]
  The group ring of $\PP=\Trop(y_1,\ldots,y_n)$ is the Laurent polynomial ring $\ZZ[y_1^{\pm1},\ldots,y_n^{\pm1}]$ while $\QQ\PP=\QQ(y_1,\ldots,y_n)$.
 \end{enumerate}
\end{example}
Fix a semifield $\PP$ and write $\cF=\QQ\PP(w_1,\ldots,w_n)$ for the field of rational functions in algebraically independent variables $w_1,\ldots,w_n$.  A \emph{(labeled) seed $(\bfx,\bfy,B)$ over $\PP$} consists of the following data:
\begin{itemize}
 \item an algebraically independent collection $\bfx=(x_1,\ldots,x_n)$, called a \emph{cluster}, consisting of elements from $\cF$ called \emph{cluster variables} or \emph{$x$-variables};
 \item a collection $\bfy=(y_1,\ldots,y_n)$ of elements from $\PP$ called \emph{coefficients} or \emph{$y$-variables};
 \item an $n\times n$ skew-symmetrizable matrix $B=(b_{ij})$ called the \emph{exchange matrix}.
\end{itemize}

The main ingredient in the definition of a cluster algebra is the notion of mutation for seeds.  For notational convenience we abbreviate $[b]_+=\max(b,0)$.
\begin{definition}\label{def:CA mutation}
 For $1\le k\le n$ we define the \emph{seed mutation in direction $k$} by $\mu_k(\bfx,\bfy,B)=(\bfx',\bfy',B')$ where
 \begin{itemize}
  \item the cluster $\bfx'=(x'_1,\ldots,x'_n)$ is given by $x'_i=x_i$ for $i\ne k$ and $x'_k$ is determined using the \emph{exchange relation}:
  \begin{equation}\label{eq:CA exchange}
   x'_kx_k=\bigg(\prod_{i=1}^n x_i^{[-b_{ik}]_+}\bigg)\frac{1+\hat y_k}{1\oplus y_k},\quad \hat y_k=y_k\prod_{i=1}^n x_i^{b_{ik}};
  \end{equation}
  \item the coefficient tuple $\bfy'=(y'_1,\ldots,y'_n)$ is given by $y'_k=y_k^{-1}$ and for $j\ne k$ we set
  \begin{equation}\label{eq:y mutation}
   y'_j=y_j y_k^{[b_{kj}]_+}\big(1\oplus y_k\big)^{-b_{kj}};
  \end{equation}
  \item the matrix $B'=(b'_{ij})$ is given by
  \begin{equation}\label{eq:CA matrix mutation}
   b'_{ij}=\begin{cases}
           -b_{ij} & \text{ if $i=k$ or $j=k$;}\\
           b_{ij}+[b_{ik}]_+b_{kj}+b_{ik}[-b_{kj}]_+ & \text{ otherwise.}\\
          \end{cases}
  \end{equation}
 \end{itemize}
\end{definition}

Write $\TT_n$ for the $n$-regular tree with edges labeled by the set $\{1,\ldots,n\}$ so that the $n$ edges emanating from each vertex receive different labels.  We write $t\stackrel{k}{\text{---}}t'$ to denote two vertices $t$ and $t'$ of $\TT_n$ connected by an edge labeled by $k$.  A \emph{cluster pattern $\Sigma$ over $\PP$} is an assignment of a seed $\Sigma^t$ to each vertex $t\in\TT_n$ such that whenever $t\stackrel{k}{\text{---}}t'$ we have $\mu_k\Sigma^t=\Sigma^{t'}$, that is $\Sigma^t$ and $\Sigma^{t'}$ are related by the seed mutation in direction $k$ whenever $t$ and $t'$ are adjoined by an edge labeled by $k$.  Fix a choice of initial vertex $t_0$, we will write $\Sigma^{t_0}=(\bfx,\bfy,B)$ while for an arbitrary vertex $t\in\TT_n$ we write $\Sigma^t=(\bfx^t,\bfy^t,B^t)$ where
\[\bfx^t=(x^t_1,\ldots,x^t_n),\quad\quad \bfy^t=(y^t_1,\ldots,y^t_n),\quad\quad B^t=(b^t_{ij}).\]
Note that every seed $\Sigma^t$ for $t\in\TT_n$ is uniquely determined once we have specified $\Sigma^{t_0}$.  Moreover, it is important to note that the exchange matrices $B^t$ are independent of the initial choice of $\bfx$ and $\bfy$.
\begin{definition}
 The \emph{cluster algebra} $\cA(\bfx,\bfy,B)$ is the $\ZZ\PP$-subalgebra of $\cF$ generated by all cluster variables from seeds appearing in the cluster pattern $\Sigma$, more precisely 
 \[\cA(\bfx,\bfy,B)=\ZZ\PP[x^t_i:t\in\TT_n,1\le i\le n]\subset\cF.\]
\end{definition}

A priori the most one can say about these constructions is that the cluster variables $x^t_i$ admit a description as subtraction-free rational expressions in the cluster variables of $\bfx$ with coefficients in $\ZZ\PP$ and that the coefficients $y^t_j$ admit a description as subtraction-free rational expressions in $\QQ_\sf(\bfy)$.  More precisely, to see this claim for $x^t_i$ we may, for each initial seed $(\bfx,\bfy,B)$,
\begin{itemize}
 \item replace the $x$- and $y$-variables by formal indeterminates (which by abuse of notation we denote by the same symbols);
 \item replace the semifield $\PP$ by the tropical semifield $\Trop(\bfy)$;
 \item replace $\cF$ by $\QQ(\bfx,\bfy)$ and opt to perform all calculations here.
\end{itemize}
Since no subtraction occurs in the recursions \eqref{eq:CA exchange}, we obtain in this way \emph{$X$-functions} $X^t_i\in\QQ_\sf(\bfx,\bfy)$.  Alternatively performing the $y$-mutations \eqref{eq:y mutation} inside $\QQ_\sf(\bfy)$ we obtain \emph{$Y$-functions} $Y^t_j\in\QQ_\sf(\bfy)$.  By the universality of the semifield $\QQ_\sf(\bfy)$ we may recover the original coefficient $y^t_j$ by the specialization $Y^t_j\big|_\PP$.  Taking this specialization where $\PP=\Trop(\bfy)$ we obtain monomials $Y^t_j\big|_{\Trop(\bfy)}=\stackrel[i=1]{n}{\prod} y_i^{c^t_{ij}}$ where we write $C^t$ for the resulting matrix whose columns $\bfc^t_j\in\ZZ^n$ are called \emph{$c$-vectors}.  Note that the $c$-vectors only depend on the initial exchange matrix $B$ and not on the choice of initial cluster $\bfx$.
\begin{proposition}\cite[Eq. 5.9]{fomin-zelevinsky4}\label{prop:CA c-vectors}
 The $c$-vectors satisfy the following recurrence relation for $t\stackrel{k}{\text{---}}t'$:
 \begin{equation}\label{eq:CA c-vector recursion}
  c^{t'}_{ij}=\begin{cases}-c^t_{ik} & \text{if $j=k$;}\\ c^t_{ij}+c^t_{ik}[b^t_{kj}]_++[-c^t_{ik}]_+b^t_{kj} & \text{if $j\ne k$.}\end{cases}
 \end{equation}
\end{proposition}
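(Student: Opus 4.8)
The plan is to unwind the definition of the $c$-vectors and reduce the claimed recurrence to the coefficient mutation rule \eqref{eq:y mutation} evaluated in the tropical semifield. Recall that the matrix $C^t$ is defined by $Y^t_j\big|_{\Trop(\bfy)}=\prod_{i=1}^n y_i^{c^t_{ij}}$, where the $Y$-functions $Y^t_j\in\QQ_\sf(\bfy)$ are produced by iterating the mutations \eqref{eq:y mutation} inside the universal semifield starting from $Y^{t_0}_j=y_j$. Because the specialization map $\QQ_\sf(\bfy)\to\Trop(\bfy)$ is a homomorphism of semifields, it commutes with $\cdot$ and $\oplus$, hence with the mutation operation \eqref{eq:y mutation}; therefore $Y^t_j\big|_{\Trop(\bfy)}$ is exactly the element of $\Trop(\bfy)$ obtained by performing the $y$-mutations directly in $\Trop(\bfy)$ with initial data $y_1,\ldots,y_n$. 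I would argue by induction on the distance from $t$ to $t_0$ in $\TT_n$ that this element is the monomial $\prod_i y_i^{c^t_{ij}}$ with exponents governed by \eqref{eq:CA c-vector recursion}; the base case $C^{t_0}=I$ is immediate, and along the way the inductive computation shows that each such specialization is indeed a (Laurent) monomial, so that the $c$-vectors are well defined.

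For the inductive step fix $t\stackrel{k}{\text{---}}t'$ and abbreviate $Y_j=y^t_j\big|_{\Trop(\bfy)}=\prod_i y_i^{c^t_{ij}}$. Applying \eqref{eq:y mutation} in $\Trop(\bfy)$ using the entries of $B^t$, the direction-$k$ coefficient becomes $Y_k^{-1}=\prod_i y_i^{-c^t_{ik}}$, giving $c^{t'}_{ik}=-c^t_{ik}$. For $j\ne k$ we have $y^{t'}_j\big|_{\Trop(\bfy)}=Y_j\,Y_k^{[b^t_{kj}]_+}\,(1\oplus Y_k)^{-b^t_{kj}}$, so the only ingredient that is not visibly a monomial is $1\oplus Y_k$. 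By the definition of $\oplus$ in the tropical semifield together with the elementary identity $\min(0,a)=-\max(0,-a)=-[-a]_+$, we get
\[
 1\oplus Y_k=\prod_{i=1}^n y_i^{\min(0,\,c^t_{ik})}=\prod_{i=1}^n y_i^{-[-c^t_{ik}]_+}.
\]
Substituting this and collecting the exponent of each $y_i$ yields
\[
 c^{t'}_{ij}=c^t_{ij}+c^t_{ik}[b^t_{kj}]_++[-c^t_{ik}]_+\,b^t_{kj},
\]
which is precisely \eqref{eq:CA c-vector recursion}. The independence of the $C^t$ from the initial cluster $\bfx$ is then automatic, since \eqref{eq:y mutation} involves only the coefficients and the matrices $B^t$, and the latter do not depend on $\bfx$.

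I do not anticipate a real obstacle: the whole content is the compatibility of tropical specialization with coefficient mutation, and the computation above is short. The only place demanding care is the sign and convention bookkeeping — in particular that \eqref{eq:y mutation} must be applied with the entries of the seed $B^t$ being mutated (not $B^{t'}$), and that evaluating $1\oplus Y_k$ tropically produces the term $[-c^t_{ik}]_+$ rather than $[c^t_{ik}]_+$ because of the minimum. Keeping the roles of $t$, $t'$, $i$, $j$, $k$ straight is essentially all there is to it.
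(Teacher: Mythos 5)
Your argument is correct and is essentially the standard derivation: the paper itself offers no proof, citing \cite[Eq.~5.9]{fomin-zelevinsky4}, and the computation there amounts to exactly your reduction of \eqref{eq:y mutation} to the tropical semifield, where $1\oplus Y_k=\prod_i y_i^{\min(0,c^t_{ik})}=\prod_i y_i^{-[-c^t_{ik}]_+}$ gives the term $[-c^t_{ik}]_+b^t_{kj}$. The only remark worth adding is that your worry about the specialization being a monomial is vacuous: every element of $\Trop(\bfy)$ is by definition a Laurent monomial, so well-definedness of the $c$-vectors is automatic.
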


Obtaining the cluster variable $x^t_i$ from $X^t_i$ is more interesting and will be discussed further below.  As a first step toward this goal, we note that the cluster algebra $\cA$ admits the following remarkable ``Laurent Phenomenon''.  
\begin{theorem}\cite[Th. 3.1]{fomin-zelevinsky1}\label{th:CA laurent}
 Fix an initial seed $(\bfx,\bfy,B)$ over a semifield $\PP$.  For any vertex $t\in\TT_n$ each cluster variable $x^t_i$ can be expressed as a Laurent polynomial in $\bfx$ with coefficients in $\ZZ\PP$.
\end{theorem}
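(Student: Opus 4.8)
The plan is to prove the sharper statement that every cluster variable $x^t_i$ lies in the Laurent ring $L=\ZZ\PP[x_1^{\pm1},\ldots,x_n^{\pm1}]$, arguing by induction on the distance $d(t_0,t)$ in $\TT_n$. The cases $d(t_0,t)\le 1$ are immediate: for $t=t_0$ there is nothing to check, and if $t_0\stackrel{k}{\text{---}}t$ then $x^t_i=x_i$ for $i\ne k$, while \eqref{eq:CA exchange} exhibits $x^t_k=\big(\prod_i x_i^{[-b_{ik}]_+}\big)(1+\hat y_k)(1\oplus y_k)^{-1}$ as a Laurent polynomial in $\bfx$ whose coefficients lie in $\ZZ\PP$ (note that $1\oplus y_k\in\PP$ is a unit of $\ZZ\PP$, and $\hat y_k$ is itself a Laurent monomial in $\bfx$ with coefficient in $\PP$).

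For the inductive step I would take the geodesic $t_0\stackrel{i_1}{\text{---}}t_1\stackrel{i_2}{\text{---}}\cdots\stackrel{i_d}{\text{---}}t_d=t$ and assume the claim at every vertex of distance $<d$. Only $x^t_{i_d}$ needs attention, since $x^t_j=x^{t_{d-1}}_j\in L$ for $j\ne i_d$ by the inductive hypothesis. The exchange relation at $t_{d-1}$ in direction $i_d$ writes $x^t_{i_d}=Q/x^{t_{d-1}}_{i_d}$, where (exchange matrices have zero diagonal, so $x^{t_{d-1}}_{i_d}$ occurs with exponent $0$ throughout) $Q$ is a polynomial over $\ZZ\PP$ in the variables $x^{t_{d-1}}_l$ with $l\ne i_d$; by hypothesis $Q\in L$ and $x^{t_{d-1}}_{i_d}\in L$, so $x^t_{i_d}$ is a ratio of elements of $L$, and the entire difficulty is to show that this ratio again lies in $L$.

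To control the denominator I would, following Fomin--Zelevinsky, first reduce to the situation in which $L$ is a unique factorization domain --- e.g. by passing to universal coefficients, realizing the $X$-functions inside $\QQ(\bfx,\bfy)$ where the ambient Laurent ring $\ZZ[x_i^{\pm1},y_j^{\pm1}]$ is a UFD, and recovering the general case via the specialization homomorphism $\QQ_\sf(\bfy)\to\PP$ --- and then strengthen the induction hypothesis to carry, alongside Laurentness, a \emph{coprimality} statement propagated along the geodesic. This pair of statements is the content of the ``Caterpillar Lemma'' of \cite{fomin-zelevinsky1}, whose proof rests on a careful local analysis of three successive mutations. Granting it, every irreducible factor of $Q$ either lies in a Laurent subring of $L$ not involving a certain distinguished cluster variable or is a power of that variable; the coprimality hypothesis says $x^{t_{d-1}}_{i_d}$ shares no factor with that variable; and since each exchange binomial $1+\hat y$ has constant term $1$, reducing the relevant polynomials modulo that variable cannot annihilate them. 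Together these force $x^{t_{d-1}}_{i_d}\mid Q$ in $L$, completing the induction.

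The main obstacle is precisely this last step: the coprimality bookkeeping --- identifying which Laurent subring of $L$ each factor of $Q$ lives in, verifying that the pertinent reductions modulo a cluster variable are nonzero, and organizing the induction so that coprimality is transported correctly along the tree. That is the one genuinely non-formal ingredient and is the heart of the argument; the reduction to a UFD, the base case, and the explicit shape of $Q$ are routine.
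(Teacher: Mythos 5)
The paper offers no proof of this statement: it is quoted directly from \cite[Th.~3.1]{fomin-zelevinsky1}, so there is no internal argument to compare yours against. Measured against the cited source, your outline faithfully reproduces the Fomin--Zelevinsky strategy: induction along a path in $\TT_n$, the observation that only the newly exchanged variable needs attention because $b_{kk}=0$, the reduction to a Laurent ring that is a UFD by working with formal $X$-functions in $\ZZ[\bfx^{\pm1},\bfy^{\pm1}]$ and recovering the general semifield case by specialization (with $1\oplus y_k\in\PP$ invertible in $\ZZ\PP$, so the specialized coefficients do land in $\ZZ\PP$), and a coprimality statement transported along the geodesic. The base case and the shape of $Q$ are handled correctly.

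Nevertheless, as a proof this has a genuine gap, which you yourself identify: the entire argument is made to rest on the Caterpillar Lemma, which you grant rather than establish. That lemma \emph{is} the theorem --- everything else in your write-up is formal. What is missing concretely is (i) the precise pair of coprimality conditions (adjacent clusters share no nontrivial common divisors in the ambient Laurent UFD, and the two exchange polynomials seen at distance two along the path are coprime), (ii) the verification that these conditions are preserved under mutation, which requires explicitly composing two consecutive exchange relations and using the coefficient dynamics \eqref{eq:y mutation} --- this is the ``local analysis of three successive mutations'' you allude to but do not perform --- and (iii) the deduction, from coprimality plus the fact that the exchange binomial $1+\hat y$ does not vanish under the relevant reduction, that $x^{t_{d-1}}_{i_d}$ divides $Q$. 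None of (i)--(iii) is routine, and (ii) in particular involves a nontrivial computation that can fail if the coefficient mutation rule is not exactly right. So your proposal is an accurate roadmap of the known proof, but it is not itself a proof; to complete it you would need to state and prove the Caterpillar Lemma rather than cite it.
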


For a seed $(\bfx,\bfy,B)$ over $\PP=\Trop(\bfy)$ we may apply Theorem~\ref{th:CA laurent} to write each $X$-function as an element of $\ZZ[\bfx^{\pm1},\bfy^{\pm1}]$.  Moreover, $y$-variables never appear in the denominators of the $X$-functions.
\begin{proposition}\cite[Prop. 3.6]{fomin-zelevinsky4}\label{prop:CA polynomial coefficients}
 Each $X$-function $X^t_i$ is contained in $\ZZ[\bfx^{\pm1},\bfy]$.
\end{proposition}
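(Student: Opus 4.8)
The plan is to argue by induction on the distance $d(t_0,t)$ in $\TT_n$, proving the stated claim together with an auxiliary hypothesis: (i) $X^t_l\in R:=\ZZ[\bfx^{\pm1},\bfy]$ for every $l$, and (ii) no $y_j$ divides $X^t_l$ in $R$ — equivalently, $X^t_l\big|_{y_j=0}\ne 0$ in the domain $R/(y_j)$ — for all $l$ and $j$. Both hold at $t_0$, where $X^{t_0}_l=x_l$. In the inductive step across an edge $t\stackrel{k}{\text{---}}t'$ only $X^{t'}_k$ is new, and I would first rewrite the exchange relation \eqref{eq:CA exchange}, carried out here over $\PP=\Trop(\bfy)$. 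Since $y^t_k=\prod_{i=1}^n y_i^{c^t_{ik}}$ by the definition of the $c$-vectors, we have $1\oplus y^t_k=\prod_{i=1}^n y_i^{\min(0,c^t_{ik})}$, so $(1\oplus y^t_k)^{-1}=\prod_{i=1}^n y_i^{[-c^t_{ik}]_+}$ is a genuine monomial; substituting this and $\hat y^t_k=\big(\prod_i y_i^{c^t_{ik}}\big)\prod_i (X^t_i)^{b^t_{ik}}$ into \eqref{eq:CA exchange} and collecting terms with $[-a]_++a=[a]_+$ turns it into
\[
X^{t'}_kX^t_k=M_1+M_2,\qquad M_1=\prod_{i=1}^n y_i^{[-c^t_{ik}]_+}(X^t_i)^{[-b^t_{ik}]_+},\qquad M_2=\prod_{i=1}^n y_i^{[c^t_{ik}]_+}(X^t_i)^{[b^t_{ik}]_+}.
\]
Every $y_i$- and $X^t_i$-exponent here is nonnegative, so $M_1,M_2\in R$ by (i) at $t$, and $M_1,M_2$ are subtraction-free, being products of $\bfy$-monomials with nonnegative powers of the $X^t_i\in\QQ_\sf(\bfx,\bfy)$. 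Put $G:=M_1+M_2\in R$.

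From this I would extract (i) for $X^{t'}_k$. The ring $R$ is a UFD in which each $y_j$ is prime, and $\ZZ[\bfx^{\pm1},\bfy^{\pm1}]$ — which contains $X^{t'}_k$ by Theorem~\ref{th:CA laurent} — is the localization of $R$ at the $y_j$. Write $X^{t'}_k=N\big/\prod_j y_j^{m_j}$ with $N\in R$, $m_j\ge 0$, and $y_j\nmid N$ whenever $m_j>0$; clearing denominators in $X^{t'}_kX^t_k=G$ gives $NX^t_k=\big(\prod_j y_j^{m_j}\big)G$, so $m_j>0$ would force $y_j\mid NX^t_k$ and hence $y_j\mid X^t_k$, contradicting (ii) at $t$. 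Therefore all $m_j=0$ and $X^{t'}_k=N\in R$.

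It remains to propagate (ii), and this is the only point where more than bookkeeping is needed. Passing to $R/(y_j)$ in $X^{t'}_kX^t_k=G$ and using that $R/(y_j)$ is a domain in which $X^t_k\big|_{y_j=0}\ne 0$ by (ii) at $t$, it suffices to show $G\big|_{y_j=0}\ne 0$. Now $y_j\mid M_1$ would, by primality and the hypothesis $y_j\nmid X^t_i$ (for all $i$), force $y_j\mid\prod_i y_i^{[-c^t_{ik}]_+}$, i.e. $c^t_{jk}<0$; symmetrically $y_j\mid M_2$ forces $c^t_{jk}>0$. Hence if $c^t_{jk}\ne 0$, exactly one of $M_1,M_2$ is divisible by $y_j$ while $G=M_1+M_2$ is not, so $G\big|_{y_j=0}\ne 0$. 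If $c^t_{jk}=0$, then $y_j$ divides neither $M_1$ nor $M_2$, so the nonzero specializations $M_1\big|_{y_j=0}$ and $M_2\big|_{y_j=0}$ both take only nonnegative values on the positive orthant (being limits, as $y_j\downarrow 0$, of the subtraction-free functions $M_1,M_2$), whence their sum $G\big|_{y_j=0}$ is again nonzero. The hard part is exactly this last case: it is the sole step that uses positivity (subtraction-freeness) of the $X$-functions, here to preclude a cancellation in $G$ modulo $y_j$; everything else reduces to manipulating \eqref{eq:CA exchange} and citing Theorem~\ref{th:CA laurent}. (The statement is \cite[Prop. 3.6]{fomin-zelevinsky4}.)
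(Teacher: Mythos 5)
The paper does not actually prove this proposition---it is imported verbatim as \cite[Prop.~3.6]{fomin-zelevinsky4}---so there is no internal proof to compare against; I am judging your argument on its own terms, and it is correct. Your route is a self-contained induction on $d(t_0,t)$: the rewriting of \eqref{eq:CA exchange} over $\Trop(\bfy)$ into $X^{t'}_kX^t_k=M_1+M_2$ with all $\bfy$-exponents nonnegative is exactly right (it is the $x$-decorated form of \eqref{eq:CA F-polynomial recursion}), the UFD/localization step correctly converts the Laurent phenomenon $X^{t'}_k\in\ZZ[\bfx^{\pm1},\bfy^{\pm1}]$ into membership in $\ZZ[\bfx^{\pm1},\bfy]$ given your auxiliary hypothesis (ii), and you have correctly isolated the one genuinely nontrivial point: when $c^t_{jk}=0$ one must rule out cancellation between $M_1\big|_{y_j=0}$ and $M_2\big|_{y_j=0}$, which you do via subtraction-freeness of the $X$-functions (a nonzero element of $\ZZ[\bfx^{\pm1},\bfy]$ cannot vanish identically on the positive orthant, and both summands are nonnegative there). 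Strengthening the induction with hypothesis (ii)---that no $y_j$ divides any $X^t_l$, which is ultimately the statement that the $F$-polynomials have nonzero constant term---is the key idea, and it is essentially the same mechanism that underlies the treatment in \cite{fomin-zelevinsky4}, where polynomiality in the coefficients and the constant-term property of $F$-polynomials are likewise intertwined; your version has the virtue of making the single use of positivity explicit. One could tighten the exposition slightly by noting that $M_2/M_1=\hat y^t_k$ is subtraction-free, so the no-cancellation claim amounts to $\hat y^t_k\big|_{y_j=0}\ne -1$, but this is cosmetic. No gaps.
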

In fact, the $X$-functions are homogeneous with repect to a certain $\ZZ^n$-grading on $\ZZ[\bfx^{\pm1},\bfy]$.  Write $\bfb_j\in\ZZ^n$ for the $j^{th}$ column of $B$.
\begin{proposition}\cite[Prop. 6.1, Prop. 6.6]{fomin-zelevinsky4}\label{prop:CA g-vectors}
 Under the $\ZZ^n$-grading 
 \[\deg(x_i)=\bfe_i\quad\quad\text{and}\quad\quad \deg(y_j)=-\bfb_j,\]
 each $X$-function is homogeneous and we write $\deg\big(X^t_j\big)=\bfg^t_j=\sum\limits_{i=1}^n g^t_{ij}\bfe_i$.  Moreover, these \emph{$g$-vectors} satisfy the following recurrence relation for $t\stackrel{k}{\text{---}}t'$:
 \begin{equation}\label{eq:CA g-vector recursion}
  g^{t'}_{ij}=\begin{cases}g^t_{ij} & \text{if $j\ne k$;}\\ -g^t_{ik}+\sum\limits_{\ell=1}^n g_{i\ell}^t[-b_{\ell k}^t]_+-\sum\limits_{\ell=1}^n b_{i\ell}^t[-c_{\ell k}^t]_+ & \text{if $j=k$.}\end{cases}
 \end{equation}
\end{proposition}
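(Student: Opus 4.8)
To establish both assertions the plan is to carry out the computation over $\PP=\Trop(\bfy)$, the tropical semifield in which the $X$-functions are defined, and to prove homogeneity together with the recurrence by a single induction on the distance $d(t_0,t)$ in $\TT_n$. The one ingredient that is not pure bookkeeping is control of the hatted coefficients $\hat y^t_j:=y^t_j\prod_{\ell=1}^n(X^t_\ell)^{b^t_{\ell j}}\in\cF$. First I would recall, or verify by a direct computation from the three mutation rules of Definition~\ref{def:CA mutation}, that for $t\stackrel{k}{\text{---}}t'$ these satisfy the $Y$-pattern mutation rules in $\cF$: $\hat y^{t'}_k=(\hat y^t_k)^{-1}$, and $\hat y^{t'}_j=\hat y^t_j\,(\hat y^t_k)^{[b^t_{kj}]_+}(1+\hat y^t_k)^{-b^t_{kj}}$ for $j\ne k$ --- that is, \eqref{eq:y mutation} with $\oplus$ replaced by the ordinary addition of $\cF$.

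Equip the ring $\ZZ[\bfx^{\pm1},\bfy]$, which contains every $X$-function by Proposition~\ref{prop:CA polynomial coefficients}, with the $\ZZ^n$-grading $\deg(x_i)=\bfe_i$, $\deg(y_j)=-\bfb_j$. The second step is to prove that each $\hat y^t_j$ is homogeneous of degree $\boldsymbol{0}$. For the base seed this is the identity $\deg(\hat y_i)=\deg(y_i)+\sum_\ell b_{\ell i}\deg(x_\ell)=-\bfb_i+\bfb_i=\boldsymbol{0}$. For the inductive step, $1$ is homogeneous of degree $\boldsymbol{0}$, so $1+h$ is homogeneous of degree $\boldsymbol{0}$ whenever $h$ is, and products, reciprocals and integer powers preserve degree $\boldsymbol{0}$; as the $Y$-pattern rules above are assembled solely from these operations, degree $\boldsymbol{0}$ passes from the $\hat y^t_j$ to the $\hat y^{t'}_j$.

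The main induction now runs as follows. The base case $X^{t_0}_i=x_i$ is homogeneous of degree $\bfe_i=\bfg^{t_0}_i$; for $j\ne k$ one has $X^{t'}_j=X^t_j$, so $\bfg^{t'}_j=\bfg^t_j$. For $j=k$, the exchange relation \eqref{eq:CA exchange} at the seed $\Sigma^t$ reads
\[X^{t'}_k\,X^t_k=\Bigl(\prod_{\ell=1}^n(X^t_\ell)^{[-b^t_{\ell k}]_+}\Bigr)\frac{1+\hat y^t_k}{1\oplus y^t_k}.\]
Over $\Trop(\bfy)$ we have $y^t_k=\prod_i y_i^{c^t_{ik}}$, hence $1\oplus y^t_k=\prod_i y_i^{\min(0,c^t_{ik})}=\prod_i y_i^{-[-c^t_{ik}]_+}$ is a monomial; with the inductive hypothesis on the $X^t_\ell$ and the degree-$\boldsymbol{0}$ property of $\hat y^t_k$, the entire right-hand side is homogeneous, and dividing by the homogeneous $X^t_k$ shows $X^{t'}_k$ is homogeneous. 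Taking degrees of both sides, and using $\deg(1+\hat y^t_k)=\boldsymbol{0}$ together with $\deg(1\oplus y^t_k)=\sum_i[-c^t_{ik}]_+\bfb_i$, yields
\[\bfg^{t'}_k=-\bfg^t_k+\sum_{\ell=1}^n[-b^t_{\ell k}]_+\,\bfg^t_\ell-\sum_{\ell=1}^n[-c^t_{\ell k}]_+\,\bfb_\ell,\]
which, read coordinate-wise, is the recurrence \eqref{eq:CA g-vector recursion} (here $\bfb_\ell$ is the $\ell$-th column of the initial matrix $B$, as in the grading convention above).

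I expect the only genuinely non-formal point to be the first step --- the verification that the $\hat y^t_j$ obey the $Y$-pattern mutation rules inside $\cF$; once this identity is available, homogeneity and the recurrence reduce to tracking the $\ZZ^n$-grading through \eqref{eq:CA exchange}. A minor but essential subtlety in the degree count is that $1\oplus y^t_k$ must be evaluated over $\Trop(\bfy)$, where it is the $c$-vector monomial $\prod_i y_i^{\min(0,c^t_{ik})}$; this is exactly what contributes the $\sum_\ell[-c^t_{\ell k}]_+\bfb_\ell$ term to the recurrence.
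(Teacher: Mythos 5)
The paper gives no proof of this proposition---it is quoted from Fomin--Zelevinsky \cite{fomin-zelevinsky4}---so there is no internal argument to compare against; your proof is essentially the standard one from that source and it is sound. The two load-bearing inputs are exactly the ones you isolate: that the elements $\hat y^t_j=y^t_j\prod_\ell (X^t_\ell)^{b^t_{\ell j}}$ obey the $Y$-pattern recursion in $\cF$ with ordinary addition (the generalized-seed analogue of which the paper itself asserts without proof in \eqref{eq:GCA hat coefficient exchange}), and the consequent degree-$\boldsymbol{0}$ homogeneity of $1+\hat y^t_k$. Given these, reading off degrees from \eqref{eq:CA exchange} over $\Trop(\bfy)$ is correct, including the observation that $1\oplus y^t_k=\prod_i y_i^{-[-c^t_{ik}]_+}$ is the monomial contributing the $c$-vector term.

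One discrepancy deserves emphasis. The recurrence you derive has last term $\sum_\ell b_{i\ell}[-c^t_{\ell k}]_+$ with $b_{i\ell}$ an entry of the \emph{initial} exchange matrix $B$ (it enters only through the grading $\deg(y_\ell)=-\bfb_\ell$), whereas \eqref{eq:CA g-vector recursion} as printed carries a superscript $t$ on this factor. Your version is the correct one: it agrees with \cite{fomin-zelevinsky4} and can be checked against the paper's own data. For instance, in Table 3 the step from ${}^L\!G(4)$ to ${}^L\!G(5)$ by $\mu_2$ has vanishing middle term and requires the last term to equal $3\cdot(\text{column }1)+1\cdot(\text{column }2)=(-3,3)^{T}$, which is what the columns of the initial matrix $DB$ give, while the columns of $\LB(4)=-DB$ give $(3,-3)^{T}$ and produce the wrong answer. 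So the superscript $t$ in the last term of \eqref{eq:CA g-vector recursion} (and likewise in Proposition~\ref{prop:GCA g-vectors}) is a typo, and your argument proves the intended statement rather than the one literally displayed.
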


Following Proposition~\ref{prop:CA polynomial coefficients} we may define \emph{$F$-polynomials} $F^t_i(\bfy)\in\ZZ[\bfy]$ via the specialization $F^t_i(\bfy)=X^t_i(\boldsymbol{1},\bfy)$, i.e. by setting all initial cluster variables $x_j$ to 1.  The $F$-polynomials satisfy a recurrence relation analogous to \eqref{eq:CA exchange}.
\begin{proposition}\cite[Prop. 5.1]{fomin-zelevinsky4}\label{prop:CA F-polynomials}
 The $F$-polynomials satisfy the following recurrence relation for $t\stackrel{k}{\text{---}}t'$:
 \begin{equation}\label{eq:CA F-polynomial recursion}
  F_j^{t'}=\begin{cases}F_j^t & \text{if $j\ne k$;}\\ \displaystyle\big(F_k^t\big)^{-1}\bigg(\stackrel[i=1]{n}{\prod} y_i^{[-c_{ik}^t]_+}\big(F_i^t\big)^{[-b_{ik}^t]_+}\bigg)\bigg(1+\stackrel[i=1]{n}{\prod} y_i^{c_{ik}^t}\big(F_i^t\big)^{b_{ik}^t}\bigg) & \text{if $j=k$.}\end{cases}
 \end{equation}
\end{proposition}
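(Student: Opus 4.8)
The plan is to derive the recursion directly from the definition $F^t_j(\bfy)=X^t_j(\boldsymbol{1},\bfy)$, working throughout over the tropical semifield $\PP=\Trop(\bfy)$ exactly as in the discussion preceding Proposition~\ref{prop:CA polynomial coefficients}, so that the $X$-functions lie in $\ZZ[\bfx^{\pm1},\bfy]$ and the initial $y_i$ are the free generators of $\Trop(\bfy)$. The recursion \eqref{eq:CA F-polynomial recursion} should then fall out of the exchange relation \eqref{eq:CA exchange}, applied to the $X$-functions, by specializing the initial cluster to $\boldsymbol{1}$. The case $j\ne k$ is immediate: seed mutation in direction $k$ fixes the $j$-th cluster variable, hence $X^{t'}_j=X^t_j$ and therefore $F^{t'}_j=F^t_j$ after setting $\bfx=\boldsymbol{1}$.

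For $j=k$ I would write \eqref{eq:CA exchange} for the $X$-functions at the vertex $t$,
\[ X^{t'}_k\,X^t_k=\Big(\prod_{i=1}^n (X^t_i)^{[-b^t_{ik}]_+}\Big)\frac{1+\hat y^t_k}{1\oplus y^t_k},\qquad \hat y^t_k=y^t_k\prod_{i=1}^n(X^t_i)^{b^t_{ik}}, \]
and then evaluate the two coefficient-semifield quantities explicitly in $\Trop(\bfy)$. By the definition of the $c$-vectors one has $y^t_k=\prod_i y_i^{c^t_{ik}}$, and since the auxiliary addition is the coordinatewise minimum, $1\oplus y^t_k=\prod_i y_i^{\min(0,c^t_{ik})}=\prod_i y_i^{-[-c^t_{ik}]_+}$, using the identity $\min(0,c)=-[-c]_+$. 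Substituting these, dividing by $X^t_k$, and then specializing $\bfx\mapsto\boldsymbol{1}$ — so that each $X^t_i\mapsto F^t_i$ and $\hat y^t_k\mapsto\prod_i y_i^{c^t_{ik}}(F^t_i)^{b^t_{ik}}$ — turns this identity into precisely \eqref{eq:CA F-polynomial recursion}. That the resulting right-hand side is actually a polynomial, i.e. that $F^t_k$ divides it, is guaranteed a posteriori by Proposition~\ref{prop:CA polynomial coefficients} applied at the vertex $t'$.

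The only delicate point is the bookkeeping between the two additions in play: the ordinary addition of $\cF=\QQ\PP(\bfx)$ occurring in the numerator $1+\hat y^t_k$ and the auxiliary (tropical) addition of $\PP$ occurring in the denominator $1\oplus y^t_k$. One must check that the $X$-function recursion is literally the specialization of \eqref{eq:CA exchange} to $\PP=\Trop(\bfy)$ with the initial $x$- and $y$-variables kept as formal indeterminates, and in particular that the tropical monomial $1\oplus y^t_k$ is not cancelled or altered before passing to $\bfx=\boldsymbol{1}$; both are transparent once one has Proposition~\ref{prop:CA polynomial coefficients} and the fact that $y^t_k$ is the monomial in $\bfy$ with exponent vector $\bfc^t_k$. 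No other step requires more than rearranging monomials.
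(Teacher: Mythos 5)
Your argument is correct and is essentially the standard proof of this statement: the paper itself gives no proof, only the citation to Fomin--Zelevinsky, and their argument is exactly yours --- write the exchange relation for the $X$-functions over $\Trop(\bfy)$, use $y^t_k=\prod_i y_i^{c^t_{ik}}$ and $1\oplus y^t_k=\prod_i y_i^{-[-c^t_{ik}]_+}$, and specialize $\bfx=\boldsymbol{1}$, with polynomiality of the result supplied by Proposition~\ref{prop:CA polynomial coefficients}. Nothing further is needed.
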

Notice that each $F$-polynomial admits an expression as a subtraction-free rational expression and thus may be considered as an element of $\QQ_\sf(\bfy)$, in particular the specialization $F^t_i\big|_\PP$ makes sense for any semifield $\PP$.  With this we may obtain a description of the $y$-variables in terms of the $c$-vectors and the specializations of the $F$-polynomials.
\begin{theorem}\cite[Prop. 3.13]{fomin-zelevinsky4}\label{th:CA y-variables}
 Fix an initial seed $(\bfx,\bfy,B)$ over a semifield $\PP$.  For any vertex $t\in\TT_n$ each coefficient $y^t_j$ of $\cA(\bfx,\bfy,B)$ can be computed as
 \[y^t_j=\bigg(\stackrel[i=1]{n}{\prod} y_i^{c^t_{ij}}\bigg)\stackrel[i=1]{n}{\prod}F^t_i\big|_\PP(\bfy)^{b^t_{ij}} .\]
\end{theorem}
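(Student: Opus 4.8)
The plan is to prove the formula by induction on the distance $d(t_0,t)$ in $\TT_n$, run in parallel with the recursions of Propositions~\ref{prop:CA c-vectors} and~\ref{prop:CA F-polynomials}. In the base case $t=t_0$ one has $c^{t_0}_{ij}=\delta_{ij}$ (since $Y^{t_0}_j=y_j$) and $F^{t_0}_i=X^{t_0}_i(\boldsymbol{1},\bfy)=1$ (since $X^{t_0}_i=x_i$), so the right-hand side collapses to $y_j=y^{t_0}_j$. For the inductive step, assume the formula holds at $t$ and let $t\stackrel{k}{\text{---}}t'$; I treat the cases $j=k$ and $j\ne k$ separately.

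The case $j=k$ is immediate: Proposition~\ref{prop:CA c-vectors} gives $c^{t'}_{ik}=-c^t_{ik}$, equation \eqref{eq:CA matrix mutation} gives $b^{t'}_{ik}=-b^t_{ik}$ (in particular $b^{t'}_{kk}=0$, so $F^{t'}_k$ does not enter the product), and $F^{t'}_i=F^t_i$ for $i\ne k$; combining these with the coefficient mutation $y^{t'}_k=(y^t_k)^{-1}$ from \eqref{eq:y mutation} shows that the right-hand side at $t'$ is the inverse of the right-hand side at $t$, which equals $(y^t_k)^{-1}$ by the inductive hypothesis.

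For $j\ne k$, the key step is to establish the identity in $\PP$
\[1\oplus y^t_k=\bigl(F^{t'}_k\big|_\PP\bigr)\bigl(F^t_k\big|_\PP\bigr)\prod_{i=1}^n y_i^{-[-c^t_{ik}]_+}\bigl(F^t_i\big|_\PP\bigr)^{-[-b^t_{ik}]_+},\]
obtained by applying the specialization semifield homomorphism $\QQ_\sf(\bfy)\to\PP$ to the recursion for $F^{t'}_k$ in Proposition~\ref{prop:CA F-polynomials} and recognizing, via the inductive hypothesis, that its factor $1+\prod_{i=1}^n y_i^{c^t_{ik}}(F^t_i)^{b^t_{ik}}$ specializes to $1\oplus y^t_k$. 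Then I substitute this identity, together with the inductive hypothesis for $y^t_j$ and $y^t_k$, into the coefficient mutation rule \eqref{eq:y mutation}, namely $y^{t'}_j=y^t_j(y^t_k)^{[b^t_{kj}]_+}(1\oplus y^t_k)^{-b^t_{kj}}$, and collect exponents. The $y_i$-exponents sum to $c^t_{ij}+c^t_{ik}[b^t_{kj}]_+ +[-c^t_{ik}]_+ b^t_{kj}=c^{t'}_{ij}$ by Proposition~\ref{prop:CA c-vectors}; the $F^t_i\big|_\PP$-exponents for $i\ne k$ sum to $b^t_{ij}+b^t_{ik}[b^t_{kj}]_+ +[-b^t_{ik}]_+ b^t_{kj}$, which equals $b^{t'}_{ij}$ because, after using $[b]_+-[-b]_+=b$, it differs from \eqref{eq:CA matrix mutation} only by $b^t_{ik}b^t_{kj}-b^t_{kj}b^t_{ik}=0$; the total $F^t_k\big|_\PP$-exponent is $b^t_{kj}-b^t_{kj}=0$ (here $b^t_{kk}=0$ is used so that $F^t_k$ is not counted twice in the product above); and the $F^{t'}_k\big|_\PP$-exponent is $-b^t_{kj}=b^{t'}_{kj}$. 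Since $F^{t'}_i\big|_\PP=F^t_i\big|_\PP$ for $i\ne k$, this is precisely the right-hand side at $t'$, closing the induction.

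I expect the exponent bookkeeping in the case $j\ne k$ to be the main obstacle: one must carefully separate products over all indices from products that omit $i=k$, invoke $b^t_{kk}=0$ at several points (valid since skew-symmetrizability, hence a vanishing diagonal, is preserved under mutation by \eqref{eq:CA matrix mutation}), and use the $[\,\cdot\,]_+$ identities to collapse the three distinct sources of $F^t_i\big|_\PP$-exponents onto the single mutated entry $b^{t'}_{ij}$. A subsidiary point is that the $F$-polynomial recursion must be read as an honest identity in $\QQ_\sf(\bfy)$ (i.e. the division by $F^t_k$ is exact, which is part of Proposition~\ref{prop:CA F-polynomials}), so that the specialization homomorphism may legitimately be applied to both sides to produce the displayed expression for $1\oplus y^t_k$.
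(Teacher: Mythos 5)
Your proof is correct: the base case, the $j=k$ case, the derivation of the expression for $1\oplus y^t_k$ from the $F$-polynomial recursion, and the exponent bookkeeping (including the cancellation of the $F^t_k\big|_\PP$-factor via $b^t_{kk}=0$ and the $[\,\cdot\,]_+$ identity matching \eqref{eq:CA matrix mutation}) all check out. The paper itself gives no proof of this statement, quoting it from \cite[Prop.~3.13]{fomin-zelevinsky4}, and your induction on the distance from $t_0$ using Propositions~\ref{prop:CA c-vectors} and~\ref{prop:CA F-polynomials} is essentially the standard argument given there.
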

Finally, we obtain a ``separation of additions'' formula for the cluster variables $x^t_i$ in terms of the $g$-vectors and the $F$-polynomials.
\begin{theorem}\cite[Cor. 6.3]{fomin-zelevinsky4}\label{th:CA x-variables}
 Fix an initial seed $(\bfx,\bfy,B)$ over a semifield $\PP$.  For any vertex $t\in\TT_n$ each cluster variable $x^t_j$ of $\cA(\bfx,\bfy,B)$ can be computed as
 \[x^t_j=\bigg(\stackrel[i=1]{n}{\prod} x_i^{g^t_{ij}}\bigg)\frac{F^t_j\big|_\cF(\hat\bfy)}{F^t_j\big|_\PP(\bfy)}.\]
\end{theorem}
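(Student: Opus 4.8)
The plan is to reduce to the universal case of \emph{principal coefficients}, where one has the $X$-functions $X^t_j\in\ZZ[\bfx^{\pm1},\bfy]$, prove the formula there by a grading argument, and then transport it to an arbitrary semifield $\PP$ by an induction along $\TT_n$. Throughout, $X^t_j$ denotes the $X$-function (so $F^t_j(\bfy)=X^t_j(\boldsymbol1,\bfy)$) and $\hat y_i=y_i\prod_\ell x_\ell^{b_{\ell i}}$ are the initial hatted coefficients of \eqref{eq:CA exchange}.

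\emph{Step 1 (a formal identity from homogeneity).} By Proposition~\ref{prop:CA polynomial coefficients} each $X^t_j$ is a Laurent polynomial in $\bfx$ with coefficients in $\ZZ[\bfy]$, and by Proposition~\ref{prop:CA g-vectors} it is homogeneous of degree $\bfg^t_j$ for the grading $\deg x_i=\bfe_i$, $\deg y_i=-\bfb_i$. Hence a monomial $\bfx^{\mathbf a}\bfy^{\mathbf m}$ of $X^t_j$ satisfies $\mathbf a=\bfg^t_j+B\mathbf m$, so $\bfx^{\mathbf a}\bfy^{\mathbf m}=\bigl(\prod_i x_i^{g^t_{ij}}\bigr)\prod_i\hat y_i^{m_i}$ because $\hat y_i/y_i=\prod_\ell x_\ell^{b_{\ell i}}$; summing over monomials yields
\[X^t_j(\bfx,\bfy)=\Bigl(\prod_{i=1}^n x_i^{g^t_{ij}}\Bigr)F^t_j\big|_\cF(\hat\bfy).\]

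\emph{Step 2 (descent to $\cA(\bfx,\bfy,B)$).} I would show, by induction on the distance from $t_0$ in $\TT_n$, that in $\cF$
\[x^t_j=X^t_j(\bfx,\bfy)\big/F^t_j\big|_\PP(\bfy),\]
with $X^t_j$ evaluated at the initial cluster and the actual coefficients. The base case $t=t_0$ is immediate. For $t\stackrel{k}{\text{---}}t'$ only $j=k$ requires work, where I would compare the exchange relation \eqref{eq:CA exchange} for $x^{t'}_k$ with the same relation for $X^{t'}_k$ over $\Trop(\bfy)$; there $1\oplus Y^t_k=\prod_iy_i^{-[-c^t_{ik}]_+}$ and $\hat Y^t_k=\bigl(\prod_iy_i^{c^t_{ik}}\bigr)\prod_i(X^t_i)^{b^t_{ik}}$. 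Dividing the relation for $X^{t'}_k$ by $F^{t'}_k\big|_\PP(\bfy)$ written out via \eqref{eq:CA F-polynomial recursion}, the tropical powers of $y_i$ cancel between the two sides; Theorem~\ref{th:CA y-variables} with $j=k$ converts the subtraction-free factor $1\oplus\prod_iy_i^{c^t_{ik}}(F^t_i|_\PP)^{b^t_{ik}}$ of the $F$-recursion into $1\oplus y^t_k$; and the induction hypothesis turns $(X^t_k)^{-1}\prod_i(X^t_i)^{[-b^t_{ik}]_+}$, divided by the analogous product of $F^t_i|_\PP$'s, into $(x^t_k)^{-1}\prod_i(x^t_i)^{[-b^t_{ik}]_+}$. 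Matching against \eqref{eq:CA exchange} for $x^{t'}_k$, what remains is the identity $\hat Y^t_k(\bfx,\bfy)=\hat y^t_k$; substituting $X^t_i(\bfx,\bfy)=x^t_i\,F^t_i|_\PP(\bfy)$ and using Theorem~\ref{th:CA y-variables} once more gives $\hat Y^t_k(\bfx,\bfy)=\bigl(\prod_iy_i^{c^t_{ik}}(F^t_i|_\PP(\bfy))^{b^t_{ik}}\bigr)\prod_i(x^t_i)^{b^t_{ik}}=y^t_k\prod_i(x^t_i)^{b^t_{ik}}=\hat y^t_k$. Combining the two displays gives the assertion.

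The genuinely load-bearing point — and where I expect the real work to lie — is the inductive step of Step 2: one must reconcile the computation carried out over $\Trop(\bfy)$ (the natural home of the $X$-functions) with the one carried out over the target semifield $\PP$ (the home of the cluster variables of $\cA(\bfx,\bfy,B)$). Theorem~\ref{th:CA y-variables} is precisely the device enabling both halves of that reconciliation — the cancellation of the $1\oplus$ factors and the identification of $\hat Y^t_k$ with $\hat y^t_k$ — so a self-contained account should establish the coefficient separation formula of Theorem~\ref{th:CA y-variables} first, by a parallel but simpler induction that never mentions the cluster variables. The remaining ingredients, the monomial bookkeeping of Step 1 and the manipulation of exponents and semifield operations in Step 2, are routine.
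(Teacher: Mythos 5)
Your proposal is correct, and it is essentially the standard argument: the paper itself gives no proof of this statement, quoting it from \cite[Cor.~6.3]{fomin-zelevinsky4}, and your two steps reconstruct exactly the route taken there (the homogeneity identity $X^t_j(\bfx,\bfy)=\bigl(\prod_i x_i^{g^t_{ij}}\bigr)F^t_j|_\cF(\hat\bfy)$ from Proposition~\ref{prop:CA g-vectors}, combined with the separation formula $x^t_j=X^t_j|_\cF(\bfx,\bfy)/F^t_j|_\PP(\bfy)$, which is \cite[Th.~3.7]{fomin-zelevinsky4} and is proved by the induction you describe, with Theorem~\ref{th:CA y-variables} established first by a parallel induction). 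The only point worth making explicit is that dividing by $F^{t'}_k|_\PP$ ``written out via \eqref{eq:CA F-polynomial recursion}'' uses that this recursion is an identity of subtraction-free expressions in $\QQ_\sf(\bfy)$ and hence specializes to $\PP$ with $+$ replaced by $\oplus$; you rely on this implicitly and it is exactly the role of the remark following Proposition~\ref{prop:CA F-polynomials}.
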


\section{Generalized Cluster Algebras}\label{sec:GCA}
Let $(\bfx,\bfy,B)$ be a seed over the semifield $\PP$.  Fix a collection $\bfZ=(Z_1,\ldots,Z_n)$ of positive degree \emph{exchange polynomials} 
\[Z_i(u)=z_{i,0}+z_{i,1}u+\cdots+z_{i,d_i-1}u^{d_i-1}+z_{i,d_i}u^{d_i}\in\ZZ\PP[u]\]
such that $z_{i,s}\in\PP$ for $0\le s\le d_i$ and $z_{i,0}=z_{i,d_i}=1$.  It will often be convenient to write $\bfz=(z_{i,s})$ with $1\le i\le n$ and $0\le s\le d_i$ for the coefficients of the polynomials $Z_i$.  Write $\overline{Z_i}(u)=u^{d_i}Z_i(u^{-1})$ for the exchange polynomial with coefficients reversed.  Together we call $\Sigma=(\bfx,\bfy,B,\bfZ)$ a \emph{generalized seed over $\PP$}.  The additional data of the polynomials $\bfZ$ allows to generalize the notion of seed mutation in such a way that all nice properties and constructions related to cluster algebras in section~\ref{sec:CA} carry over to the new setting.

\begin{definition}\label{def:GCA mutation}
 For $1\le k\le n$ we define the \emph{generalized seed mutation in direction $k$} by $\mu_k(\bfx,\bfy,B,\bfZ)=(\bfx',\bfy',B',\bfZ')$ where
 \begin{itemize}
  \item the cluster $\bfx'=(x'_1,\ldots,x'_n)$ is given by $x'_i=x_i$ for $i\ne k$ and $x'_k$ is determined using the \emph{exchange relation}:
  \begin{equation}\label{eq:GCA exchange}
   x'_k x_k=\bigg(\prod_{i=1}^n x_i^{[-b_{ik}]_+}\bigg)^{d_k}\frac{Z_k\big(\hat{y}_k\big)}{Z_k\big|_\PP(y_k)},\quad \hat{y}_k=y_k\prod_{i=1}^n x_i^{b_{ik}};
  \end{equation}
  \item the coefficient tuple $\bfy'=(y'_1,\ldots,y'_n)$ is given by $y'_k=y_k^{-1}$ and for $j\ne k$ we set
  \begin{equation}\label{eq:GCA coefficient exchange}
   y'_j=y_j \big(y_k^{d_k}\big)^{[b_{kj}]_+}Z_k\big|_\PP(y_k)^{-b_{kj}};
  \end{equation}
  \item the matrix $B'=( b'_{ij})$ is given by
  \begin{equation}\label{eq:GCA matrix mutation}
   b'_{ij}=\begin{cases}
           -b_{ij} & \text{ if $i=k$ or $j=k$;}\\
           b_{ij}+[b_{ik}]_+d_k b_{kj}+b_{ik}d_k[-b_{kj}]_+ & \text{ otherwise.}\\
          \end{cases}
  \end{equation}
  \item the exchange polynomials $\bfZ'=(Z'_1,\ldots,Z'_n)$ are given by $Z'_i=Z_i$ for $i\ne k$ and $Z'_k=\overline{Z}_k$, writing this relation purely in terms of coefficients gives $z'_{i,s}=z_{i,s}$ for $i\ne k$ and $z'_{k,s}=z_{k,d_k-s}$.
 \end{itemize}
\end{definition}

One may easily check that the $\hat y$-variables mutate in the same way as the $y$-variables, namely $\hat{y}'_k=\hat{y}_k^{-1}$ and for $j\ne k$ we have
\begin{equation}\label{eq:GCA hat coefficient exchange}
 \hat{y}'_j=\hat{y}_j \big(\hat{y}_k^{d_k}\big)^{[b_{kj}]_+}Z_k(\hat{y}_k)^{-b_{kj}}.
\end{equation}

As a first indication that this definition is correct we verify that $\mu_k^2\Sigma=\Sigma$.
\begin{proposition}
 The generalized seed mutation $\mu_k$ is involutive.
\end{proposition}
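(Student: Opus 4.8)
The plan is to check that applying $\mu_k$ twice returns each of the four pieces of data $(\bfx,\bfy,B,\bfZ)$ to itself, treating the components one at a time and in a convenient order. The components $B$ and $\bfZ$ are easiest, so I would dispatch them first. For the exchange polynomials, $Z_k'=\overline{Z}_k$ and $Z_i'=Z_i$ for $i\ne k$, so $Z_k''=\overline{\overline{Z}_k}=Z_k$ since reversing coefficients twice is the identity (using $z_{k,0}=z_{k,d_k}=1$ so the degree is preserved), and $Z_i''=Z_i$ trivially; in particular the $d_i$ are unchanged, which is needed before dealing with $\bfx$ and $\bfy$. For the exchange matrix, I would note that \eqref{eq:GCA matrix mutation} is formally identical to the ordinary matrix mutation \eqref{eq:CA matrix mutation} after the substitution $b_{kj}\mapsto d_k b_{kj}$ in the off-diagonal terms, or simply observe directly that the sign changes in the $i=k$ or $j=k$ rows/columns cancel and that the remaining computation $b_{ij}\mapsto b_{ij}'\mapsto b_{ij}''$ telescopes exactly as in the classical case; this is a routine sign-bookkeeping argument of the type $[b]_+-[-b]_+=b$ that I would not grind through.

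Next I would handle the coefficient tuple $\bfy$. Since $y_k'=y_k^{-1}$ we get $y_k''=(y_k^{-1})^{-1}=y_k$ immediately. For $j\ne k$, applying \eqref{eq:GCA coefficient exchange} twice gives
\[
 y_j''=y_j'\,\big((y_k')^{d_k}\big)^{[b_{kj}']_+}\,Z_k'\big|_\PP(y_k')^{-b_{kj}'}
      =y_j\,\big(y_k^{d_k}\big)^{[b_{kj}]_+}Z_k\big|_\PP(y_k)^{-b_{kj}}\cdot\big(y_k^{-d_k}\big)^{[-b_{kj}]_+}\overline{Z}_k\big|_\PP(y_k^{-1})^{b_{kj}},
\]
using $b_{kj}'=-b_{kj}$, $y_k'=y_k^{-1}$, and $Z_k'=\overline{Z}_k$. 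The key identity to invoke here is $\overline{Z}_k(u^{-1})=u^{-d_k}Z_k(u)$ (this follows directly from $\overline{Z_k}(u)=u^{d_k}Z_k(u^{-1})$ by substituting $u\mapsto u^{-1}$), so $\overline{Z}_k\big|_\PP(y_k^{-1})=y_k^{-d_k}Z_k\big|_\PP(y_k)$ in $\PP$. Substituting this in, the two $Z_k|_\PP(y_k)$ factors combine to an exponent $-b_{kj}+b_{kj}=0$, and the powers of $y_k^{d_k}$ collect to exponent $[b_{kj}]_+-[-b_{kj}]_++b_{kj}\cdot(-1)\cdot(-1)$... more carefully, $[b_{kj}]_+ - [-b_{kj}]_+ - d_k$-free part cancels via $[b]_+-[-b]_+=b$ against the contribution $b_{kj}$ coming from the $\overline Z_k$ factor's monomial, leaving $y_j''=y_j$. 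I would write this cancellation cleanly rather than verbosely.

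Finally I would treat the cluster $\bfx$, which is the step I expect to be the main obstacle, since it is where the degree-$d_k$ powers and the $\hat y$-variables interact. For $i\ne k$ nothing happens. For the $k$-th variable, I would use that the $\hat y$-variables mutate exactly like $y$-variables (equation \eqref{eq:GCA hat coefficient exchange}), so in particular $\hat y_k'=\hat y_k^{-1}$; combined with $B'=B$-mutated, one computes $\prod_i (x_i')^{[-b_{ik}']_+}=\prod_i x_i^{[b_{ik}]_+}$ (again $i=k$ contributes nothing since $x_k'$ is excluded on the right after re-mutation, and one uses $b_{ik}'=-b_{ik}$). Then
\[
 x_k''x_k'=\bigg(\prod_{i} x_i^{[b_{ik}]_+}\bigg)^{d_k}\frac{\overline{Z}_k(\hat y_k^{-1})}{\overline{Z}_k\big|_\PP(y_k^{-1})}
 =\bigg(\prod_{i} x_i^{[b_{ik}]_+}\bigg)^{d_k}\frac{\hat y_k^{-d_k}Z_k(\hat y_k)}{y_k^{-d_k}Z_k\big|_\PP(y_k)},
\]
applying the same reversal identity in $\cF$ and in $\PP$ respectively. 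Since $\hat y_k=y_k\prod_i x_i^{b_{ik}}$, the ratio $\hat y_k^{-d_k}/y_k^{-d_k}=\big(\prod_i x_i^{b_{ik}}\big)^{-d_k}=\big(\prod_i x_i^{[b_{ik}]_+}\big)^{-d_k}\big(\prod_i x_i^{[-b_{ik}]_+}\big)^{d_k}$ via $b=[b]_+-[-b]_+$, so the $\big(\prod_i x_i^{[b_{ik}]_+}\big)^{d_k}$ factors cancel and we are left with $x_k''x_k'=\big(\prod_i x_i^{[-b_{ik}]_+}\big)^{d_k}\frac{Z_k(\hat y_k)}{Z_k|_\PP(y_k)}=x_k'x_k$ by \eqref{eq:GCA exchange}. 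Cancelling the nonzero element $x_k'\in\cF$ gives $x_k''=x_k$, completing the proof. The only genuinely delicate point is making sure the reversal identity $\overline{Z}_k(u^{-1})=u^{-d_k}Z_k(u)$ is applied consistently in the right field ($\cF$ for the numerator, $\PP$ for the denominator) and that the degree $d_k$ is the same before and after the first mutation, which is exactly what the $\bfZ$-step established.
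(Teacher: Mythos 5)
Your proof is correct and follows essentially the same route as the paper's: a componentwise check using the reversal identity $\overline{Z}_k(u^{-1})=u^{-d_k}Z_k(u)$ (applied in $\cF$ for $\hat y_k$ and in $\PP$ for $y_k$), the identity $[b]_+-[-b]_+=b$, and reduction of the matrix step to classical mutation (the paper phrases this as $DB'=(DB)'$ and $B'D=(BD)'$, which is the cleaner version of your substitution remark). The only blemish is the garbled sentence tracking the exponent of $y_k^{d_k}$ in the $y_j''$ computation, but the correct cancellation $[b_{kj}]_+-[-b_{kj}]_+-b_{kj}=0$ is implicit in the ingredients you name.
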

\begin{proof}
 Let $(\bfx',\bfy',B',\bfZ')=\mu_k(\bfx,\bfy,B,\bfZ)$ and write $(\bfx'',\bfy'',B'',\bfZ'')=\mu_k(\bfx',\bfy',B',\bfZ')$.  To begin note that $x''_i=x'_i=x_i$ for $i\ne k$ and $\big(\hat{y}'_k\big)^{-1}=\hat{y}_k$.  Then $x''_k$ is given by
 \begin{align*}
  x_k''
  &=\frac{1}{x'_k}\bigg(\prod_{i=1}^n (x'_i)^{[-b'_{ik}]_+}\bigg)^{d_k}\frac{\overline{Z}_k\big(\hat{y}'_k\big)}{\overline{Z}_k\big|_\PP(y'_k)}=\frac{1}{x'_k}\bigg(\prod_{i=1}^n x_i^{[b_{ik}]_+}\bigg)^{d_k}\frac{\hat{y}_k^{-d_k}Z_k\big(\hat{y}_k\big)}{y_k^{-d_k}Z_k\big|_\PP(y_k)}\\
  &=\frac{1}{x'_k}\bigg(\prod_{i=1}^n x_i^{[b_{ik}]_+-b_{ik}}\bigg)^{d_k}\frac{Z_k\big(\hat{y}_k\big)}{Z_k\big|_\PP(y_k)}=\frac{1}{x'_k}\bigg(\prod_{i=1}^n x_i^{[-b_{ik}]_+}\bigg)^{d_k}\frac{Z_k\big(\hat{y}_k\big)}{Z_k\big|_\PP(y_k)}=x_k.
 \end{align*}
 Also $y_k''=(y_k')^{-1}=y_k$, while for $j\ne k$ we have
 \begin{align*}
  y_j''
  &=y'_j\big((y'_k)^{d_k}\big)^{[b'_{kj}]_+}\overline{Z}_k\big|_\PP(y'_k)^{-b'_{kj}}=y_j\big(y_k^{d_k}\big)^{[b_{kj}]_+}Z_k\big|_\PP(y_k)^{-b_{kj}} \big(y_k^{d_k}\big)^{-[-b_{kj}]_+}\Big(y_k^{-d_k}Z_k\big|_\PP(y_k)\Big)^{b_{kj}}=y_j.
 \end{align*}

 To see that the matrix mutation is involutive notice that we may apply the classical matrix mutation \eqref{eq:CA matrix mutation} to obtain exchange matrices $(DB)'$ and $(BD)'$ where $D=(d_i\delta_{ij})$.  Then it is immediate from \eqref{eq:GCA matrix mutation} that we have $DB'=(DB)'$ and $B'D=(BD)'$, the involutivity of matrix mutation \eqref{eq:GCA matrix mutation} follows.  Finally the equality $\bfZ''=\bfZ$ is immediate from the definitions.
\end{proof}

\erase{
Fix a collection $\bfd=(d_1,\ldots,d_n)$ of positive integers called \emph{mutation degrees}.  We also fix a collection $\bfz=( z_{i,s})$ where $1\le i\le n$ and $0\le s\le d_i$ called \emph{semi-frozen coefficients} or \emph{$z$-variables} subject to the requirement $ z_{i,0}= z_{i,d_i}=1$.  Together the pair $(\bfd,\bfz)$ is called \emph{mutation data}.  This data allows to generalize the notion of seed mutation in such a way that all nice properties and constructions related to cluster algebras carry over to the new setting.
\begin{definition}\label{def:GCA mutation}
 For $1\le k\le n$ we define the \emph{$(\bfd,\bfz)$-mutation in direction $k$} by $\mu_k(\bfx,\bfy, B)=(\bfx',\bfy', B')$ where
 \begin{itemize}
  \item the cluster $\bfx'=( x'_1,\ldots, x'_n)$ is given by $ x'_i= x_i$ for $i\ne k$ and $ x'_k$ is determined using the \emph{exchange relation}:
  \begin{equation}\label{eq:GCA exchange}
    x'_k x_k=\bigg(\prod_{i=1}^n  x_i^{[- b_{ik}]_+}\bigg)^{d_k}\cdot\frac{\sum\limits_{s=0}^{d_k}  z_{k,s}\hat { y}_k^s}{\bigoplus\limits_{s=0}^{d_k}  z_{k,s} y_k^s},\quad \hat { y}_k= y_k\prod_{i=1}^n  x_i^{ b_{ik}};=\frac{Z_k\Big(\prod_{i=1}^n x_i^{[-b_{ik}]_+},y_k\prod_{i=1}^n x_i^{[b_{ik}]_+}\Big)}{Z_k\big|_\PP(1,y_k)}
  \end{equation}
  \item the coefficient tuple $\bfy'=( y'_1,\ldots, y'_n)$ is given by $ y'_k= y_k^{-1}$ and for $j\ne k$ we set
  \begin{equation}\label{eq:GCA coefficient exchange}
    y'_j= y_j \Big( y_k^{[ b_{kj}]_+}\Big)^{d_k}\bigg(\bigoplus\limits_{s=0}^{d_k}  z_{k,s} y_k^s\bigg)^{- b_{kj}};
  \end{equation}
  \item the matrix $B'=( b'_{ij})$ is given by
  \begin{equation}\label{eq:GCA matrix mutation}
    b'_{ij}=\begin{cases}
           - b_{ij} & \text{ if $i=k$ or $j=k$;}\\
            b_{ij}+[ b_{ik}]_+d_k b_{kj}+ b_{ik}d_k[- b_{kj}]_+ & \text{ otherwise.}\\
          \end{cases}
  \end{equation}
 \end{itemize}
\end{definition}}

The generalized seeds and their mutations we have defined here are a specialization of the setup in \cite{chekhov-shapiro}.  There a generalized seed over $\PP$ is a triple $(\bfx,\bfp,B)$ where $\bfx$ is a cluster, $B$ is an exchange matrix, and $\bfp=(p_{i,s})$, where $1\le i\le n$ and $0\le s\le d_i$, is a collection of elements of $\PP$.  The mutation $\mu_k(\bfx,\bfp,B)=(\bfx',\bfp',B')$ is given by replacing \eqref{eq:GCA exchange} with
\begin{equation}\label{eq:CK exchange}
 x'_kx_k=\bigg(\prod_{i=1}^n x_i^{[-b_{ik}]_+}\bigg)^{d_k}\sum\limits_{s=0}^{d_k} p_{k,s} w_k^s,\quad w_k=\prod_{i=1}^n x_i^{b_{ik}}
\end{equation}
and by replacing \eqref{eq:GCA coefficient exchange} with
\[p'_{k,s}=p_{k,d_k-s}\quad\text{ and }\quad\frac{p'_{j,s}}{p'_{j,0}}=\frac{p_{j,s}}{p_{j,0}}\bigg(\frac{p_{k,d_k}}{p_{k,0}}\bigg)^{s[b_{kj}]_+}p_{k,0}^{sb_{kj}}.\]

Our generalized seed mutations can be related to the more general setting of \cite{chekhov-shapiro} by defining
\begin{equation}\label{eq:CK via NR}
 p_{i,s}=\frac{z_{i,s}y_i^s}{Z_i\big|_\PP(y_i)}
\end{equation}
where we note the identities 
\[\bigoplus_{s=0}^{d_i} p_{i,s}=1\quad\text{ and }\quad\frac{p_{i,d_i}}{p_{i,0}}=y_i^{d_i}.\]
\begin{proposition}\label{le:pca is gca}
 Generalized seeds of the form $(\bfx,\bfy,B,\bfZ)$ are in bijection with generalized seeds of the form $(\bfx,\bfp,B)$ satisfying
\begin{enumerate}
 \item (normalization condition) $\bigoplus\limits_{s=0}^{d_i} p_{i,s}=1$;
 \item (power condition) $\frac{p_{i,d_i}}{p_{i,0}}=y_i^{d_i}$ for some $y_i\in\PP$.
\end{enumerate}
Moreover, this bijection is compatible with mutations.
\end{proposition}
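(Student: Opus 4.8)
The plan is to make the bijection completely explicit, using \eqref{eq:CK via NR} in one direction and the two conditions to build the inverse in the other, and then to verify compatibility with mutations coordinate by coordinate. Throughout, the mutation degrees $d_1,\dots,d_n$ are regarded as fixed, so that the only data varying are $(\bfy,\bfZ)$ on one side and $\bfp$ on the other. First I would define the forward map $\Phi\colon(\bfx,\bfy,B,\bfZ)\mapsto(\bfx,\bfp,B)$ by $p_{i,s}=z_{i,s}y_i^s\big/Z_i\big|_\PP(y_i)$. That $\Phi$ lands among the $\bfp$ satisfying the normalization and power conditions is exactly the pair of identities recorded after \eqref{eq:CK via NR}: the first is $\bigoplus_s p_{i,s}=\big(\bigoplus_s z_{i,s}y_i^s\big)\big/Z_i\big|_\PP(y_i)=1$ by the very definition of $Z_i\big|_\PP$, and the second is $p_{i,d_i}/p_{i,0}=(z_{i,d_i}y_i^{d_i})/z_{i,0}=y_i^{d_i}$ using $z_{i,0}=z_{i,d_i}=1$.

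Next I would define the inverse map $\Psi$. Given $(\bfx,\bfp,B)$ with $\bigoplus_s p_{i,s}=1$ and $p_{i,d_i}/p_{i,0}=y_i^{d_i}$ for some $y_i\in\PP$, set $\bfy=(y_1,\dots,y_n)$ and $z_{i,s}=(p_{i,s}/p_{i,0})\,y_i^{-s}$. One must check $y_i$ is \emph{uniquely} determined by $\bfp$: if $y_i^{d_i}=(y_i')^{d_i}$ then $(y_i/y_i')^{d_i}=1$, and since $\PP$ is torsion-free this forces $y_i=y_i'$. Then $z_{i,0}=1$ is immediate and $z_{i,d_i}=(p_{i,d_i}/p_{i,0})y_i^{-d_i}=1$ by the power condition, so $(\bfx,\bfy,B,\bfZ)$ is a genuine generalized seed. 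That $\Phi$ and $\Psi$ are mutually inverse is a direct substitution: starting from $\bfp$, the normalization condition gives $Z_i\big|_\PP(y_i)=\bigoplus_s z_{i,s}y_i^s=\bigoplus_s(p_{i,s}/p_{i,0})=1/p_{i,0}$, hence $z_{i,s}y_i^s/Z_i\big|_\PP(y_i)=p_{i,s}$; the reverse composite is similar and even shorter.

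Finally I would check compatibility with mutations. The exchange matrix mutates by the same rule \eqref{eq:GCA matrix mutation} on both sides, so there is nothing to verify there. For the cluster, substituting $p_{k,s}=z_{k,s}y_k^s/Z_k\big|_\PP(y_k)$ and $w_k=\hat y_k/y_k$ into the Chekhov--Shapiro exchange relation \eqref{eq:CK exchange} collapses $\sum_s p_{k,s}w_k^s$ to $Z_k(\hat y_k)/Z_k\big|_\PP(y_k)$, which is precisely \eqref{eq:GCA exchange}; thus the clusters agree. For the coefficient data I treat $i=k$ and $j\neq k$ separately. When $i=k$: the rules $Z'_k=\overline Z_k$ and $y'_k=y_k^{-1}$ give, after reindexing the $\oplus$-sum, $\overline Z_k\big|_\PP(y_k^{-1})=\bigoplus_s z_{k,d_k-s}y_k^{-s}=y_k^{-d_k}\bigoplus_s z_{k,s}y_k^s=y_k^{-d_k}Z_k\big|_\PP(y_k)$, whence $z'_{k,s}(y'_k)^s/\overline Z_k\big|_\PP(y'_k)=z_{k,d_k-s}y_k^{d_k-s}/Z_k\big|_\PP(y_k)=p_{k,d_k-s}$, matching $p'_{k,s}=p_{k,d_k-s}$. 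When $j\neq k$: from $Z'_j=Z_j$ and \eqref{eq:GCA coefficient exchange} one gets $p'_{j,s}/p'_{j,0}=z_{j,s}(y'_j)^s=(p_{j,s}/p_{j,0})(p_{k,d_k}/p_{k,0})^{s[b_{kj}]_+}p_{k,0}^{sb_{kj}}$ after substituting $p_{k,d_k}/p_{k,0}=y_k^{d_k}$ and $p_{k,0}=Z_k\big|_\PP(y_k)^{-1}$; this is exactly the Chekhov--Shapiro coefficient rule. Since $\Phi$ automatically enforces the normalization condition on both of the resulting tuples, matching $p'_{k,s}$ together with the ratios $p'_{j,s}/p'_{j,0}$ pins down $\bfp'$ completely, so $\Phi$ intertwines $\mu_k$.

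I expect no serious obstacle: every step is a substitution, and the bulk of the work is bookkeeping. The only genuinely non-formal points, which I would state carefully, are (i) the uniqueness of the $d_i$-th root $y_i$ inside $\PP$, which is the sole place where torsion-freeness of $\PP$ is used and which is what makes $\Psi$ well defined, and (ii) keeping careful track of which operations are the group multiplication of $\PP$ and which are the auxiliary addition $\oplus$, since the identities $\bigoplus_s z_{i,s}y_i^s=Z_i\big|_\PP(y_i)$, the distributivity used in $\overline Z_k\big|_\PP(y_k^{-1})=y_k^{-d_k}Z_k\big|_\PP(y_k)$, and the normalization-driven determination of the $p'_{j,0}$ are precisely what drive the argument.
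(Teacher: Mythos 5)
Your proposal is correct and follows essentially the same route as the paper: the bijection is defined by \eqref{eq:CK via NR} and inverted via $z_{i,s}=y_i^{-s}p_{i,s}/p_{i,0}$ (with torsion-freeness of $\PP$ giving uniqueness of $y_i$), and mutation compatibility is checked by the same direct substitutions for $p'_{k,s}$ and the ratios $p'_{j,s}/p'_{j,0}$. The only cosmetic difference is that the paper also verifies the backward intertwining (recomputing $y'_i$ and $z'_{i,s}$ from the mutated $\bfp$-seed) explicitly, whereas you deduce it from bijectivity plus the normalization condition pinning down $\bfp'$; both are valid.
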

\begin{remark}
  Such $y_i$ as in (2) is unique since $\PP$ is torsion-free, i.e. if $(y'_i)^{d_i}=y_i^{d_i}$ then $\Big(\frac{y'_i}{y_i}\Big)^{d_i}=1$ and so $\frac{y'_i}{y_i}=1$.
 \end{remark}
\begin{proof}
 For a generalized seed $(\bfx,\bfy,B,\bfZ)$ define $p_{i,s}$ as in \eqref{eq:CK via NR}.  Write $(\bfx',\bfy',B',\bfZ')=\mu_k(\bfx,\bfy,B,\bfZ)$ and again use \eqref{eq:CK via NR} to define $p'_{i,s}$ in terms of this seed.  Then we have
 \[p'_{k,s}=\frac{z'_{k,s}(y'_k)^s}{\overline{Z}_k\big|_\PP(y'_k)}=\frac{z_{k,d_k-s}y_k^{-s}}{y_k^{-d_k}Z_k\big|_\PP(y_k)}=\frac{z_{k,d_k-s}y_k^{d_k-s}}{Z_k\big|_\PP(y_k)}=p_{k,d_k-s}\]
 while for $j\ne k$ we have
 \begin{align*}
  \frac{p'_{j,s}}{p'_{j,0}}
  &=\frac{z'_{j,s}(y'_j)^s}{\overline{Z}_j\big|_\PP(y'_j)}\frac{\overline{Z}_j\big|_\PP(y'_j)}{z'_{j,0}}=z_{j,s}\bigg(y_j\big(y_k^{d_k}\big)^{[b_{kj}]_+}Z_k\big|_\PP(y_k)^{-b_{kj}}\bigg)^s=\frac{p_{j,s}}{p_{j,0}}\bigg(\frac{p_{k,d_k}}{p_{k,0}}\bigg)^{s[b_{kj}]_+}p_{k,0}^{sb_{kj}}
 \end{align*}
 as desired.

 Conversely, let $(\bfx,\bfp,B)$ be a generalized seed satisfying (1) and (2) where we define $y_i$ using (2).  Set $z_{i,s}=y_i^{-s}\frac{p_{i,s}}{p_{i,0}}$.  Notice that the definitions immediately imply $z_{i,0}=z_{i,d_i}=1$.  Since $p_{i,s}=z_{i,s}y_i^sp_{i,0}$, by the normalization condition we have $p_{i,0}^{-1}=Z_i|_\PP(y_i)$ where we write $Z_i=z_{i,0}+z_{i,1}u+\cdots+z_{i,d_i-1}u^{d_i-1}+z_{i,d_i}u^{d_i}$.  Write $(\bfx',\bfp',B')=\mu_k(\bfx,\bfp,B)$ so that we may define $y'_i$ and $z'_{i,s}$ as above using this generalized seed.  Then we have
 \[(y'_k)^{d_k}=\frac{p'_{k,d_k}}{p'_{k,0}}=\frac{p_{k,0}}{p_{k,d_k}}=y_k^{-d_k}\]
 while for $j\ne k$ we have
 \[(y'_j)^{d_j}=\frac{p'_{j,d_j}}{p'_{j,0}}=\frac{p_{j,d_j}}{p_{j,0}}\bigg(\frac{p_{k,d_k}}{p_{k,0}}\bigg)^{d_j[ b_{kj}]_+}p_{k,0}^{d_j b_{kj}}=\bigg(y_j\big(y_k^{d_k}\big)^{[ b_{kj}]_+}Z_k\big|_\PP(y_k)^{- b_{kj}}\bigg)^{d_j},\]
 so the coefficients mutate as desired.  Similarly we have
 \begin{align*}
  z'_{k,s}=(y'_k)^{-s}\frac{p'_{k,s}}{p'_{k,0}}=y_k^s\frac{p_{k,d_k-s}}{p_{k,d_k}}=y_k^s\frac{p_{k,d_k-s}}{p_{k,0}}\frac{p_{k,0}}{p_{k,d_k}}=y_k^sy_k^{d_k-s}z_{k,d_k-s}y_k^{-d_k}=z_{k,d_k-s}
 \end{align*}
 and for $j\ne k$ we have
 \begin{align*}
  z'_{j,s}&=(y'_j)^{-s}\frac{p'_{j,s}}{p'_{j,0}}=\bigg(y_j\big(y_k^{d_k}\big)^{[ b_{kj}]_+}Z_k\big|_\PP(y_k)^{- b_{kj}}\bigg)^{-s}\frac{p_{j,s}}{p_{j,0}}\bigg(\frac{p_{k,d_k}}{p_{k,0}}\bigg)^{s[b_{kj}]_+}p_{k,0}^{sb_{kj}}=z_{j,s}
 \end{align*}
 as desired.
\end{proof}

A \emph{generalized cluster pattern $\Sigma$ over $\PP$} is in assignment of a generalized seed $\Sigma^t$ to each vertex $t\in\TT_n$ such that whenever $t\stackrel{k}{\text{---}}t'$ we have $\mu_k\Sigma^t=\Sigma^{t'}$.  As for cluster algebras, the entire generalized cluster pattern $\Sigma$ is uniquely determined from any choice of initial seed $\Sigma^{t_0}=(\bfx,\bfy, B,\bfZ)$.  We maintain the notation $\Sigma^t=(\bfx^t,\bfy^t, B^t,\bfZ^t)$ from above where we write $\bfZ^t=(Z_1^t,\ldots,Z_n^t)$.
\begin{definition}
 The \emph{generalized cluster algebra} $\cA=\cA(\bfx,\bfy, B,\bfZ)$ is the $\ZZ\PP$-subalgebra of $\cF$ generated by all cluster variables from seeds appearing in the generalized cluster pattern $\Sigma$, more precisely 
 \[\cA(\bfx,\bfy,B,\bfZ)=\ZZ\PP[ x^t_i:t\in\TT_n,1\le i\le n]\subset\cF.\]
\end{definition}

The main feature of cluster algebras to which one might attribute their ubiquity is the Laurent Phenomenon, a first indication that generalized cluster algebras will find themselves as useful is the following consequence of Proposition~\ref{le:pca is gca} and \cite[Th. 2.5]{chekhov-shapiro}.
\begin{corollary}
 Fix an initial generalized seed $(\bfx,\bfy,B,\bfZ)$ over a semifield $\PP$.  For any vertex $t\in\TT_n$ each cluster variable $x_i^t$ can be expressed as a Laurent polynomial of $\bfx$ with coefficients in $\ZZ\PP$.
\end{corollary}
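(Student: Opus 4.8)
The plan is to deduce this from the Laurent Phenomenon of Chekhov and Shapiro in the more general framework recalled just above, by transporting our generalized seed across the bijection of Proposition~\ref{le:pca is gca}. Thus the whole argument is essentially a matter of matching up two bookkeeping conventions; there is no real obstacle once Proposition~\ref{le:pca is gca} is available.

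First I would take the initial generalized seed $(\bfx,\bfy,B,\bfZ)$ and form the Chekhov--Shapiro seed $(\bfx,\bfp,B)$ with the same cluster and the same exchange matrix, where $p_{i,s}=\frac{z_{i,s}y_i^s}{Z_i|_\PP(y_i)}\in\PP$ as in \eqref{eq:CK via NR}. Proposition~\ref{le:pca is gca} asserts that this assignment intertwines the generalized seed mutations \eqref{eq:GCA exchange}--\eqref{eq:GCA coefficient exchange} with the Chekhov--Shapiro mutations obtained by replacing \eqref{eq:GCA exchange} by \eqref{eq:CK exchange} (the matrix mutation \eqref{eq:GCA matrix mutation} being common to both setups). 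Iterating this compatibility along the tree $\TT_n$ starting from $t_0$, one concludes that the generalized cluster pattern attached to $(\bfx,\bfy,B,\bfZ)$ has, at every vertex $t$, the same cluster $\bfx^t$ as the Chekhov--Shapiro cluster pattern attached to $(\bfx,\bfp,B)$; in particular the cluster variables $x_i^t$ of $\cA(\bfx,\bfy,B,\bfZ)$ coincide with those of the Chekhov--Shapiro algebra on $(\bfx,\bfp,B)$.

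Next I would invoke \cite[Th.~2.5]{chekhov-shapiro}, which says precisely that each such $x_i^t$ is a Laurent polynomial in the initial cluster $\bfx=(x_1,\ldots,x_n)$ whose coefficients lie in the subring of $\PP$ generated by the initial coefficients $p_{j,s}$. Since each $p_{j,s}\in\PP$, that subring is contained in $\ZZ\PP$, whence $x_i^t\in\ZZ\PP[x_1^{\pm1},\ldots,x_n^{\pm1}]$, which is the claim. The only point that deserves a word of care is exactly this last ring identification: one must check that the a priori coefficient ring $\ZZ[p_{j,s}:1\le j\le n,\ 0\le s\le d_j]$ in the Chekhov--Shapiro expansion sits inside $\ZZ\PP$, and this is immediate from \eqref{eq:CK via NR}. (Should one want a self-contained proof instead, one could mimic the Fomin--Zelevinsky induction on $\TT_n$ directly for the exchange relation \eqref{eq:GCA exchange}, tracking that the numerator $Z_k(\hat y_k)$ expands to a Laurent polynomial in $\bfx$ over $\ZZ\PP$ and that the coefficient data $Z_i|_\PP(y_i)$ cause no denominators; but routing through Proposition~\ref{le:pca is gca} and \cite[Th.~2.5]{chekhov-shapiro} is the shortest path.)
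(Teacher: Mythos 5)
Your proposal is correct and follows exactly the route the paper intends: the paper presents this corollary as an immediate consequence of Proposition~\ref{le:pca is gca} and \cite[Th.~2.5]{chekhov-shapiro}, with no further argument given. Your write-up merely spells out the details (the agreement of the exchange relations \eqref{eq:GCA exchange} and \eqref{eq:CK exchange} under the substitution \eqref{eq:CK via NR}, and the observation that the coefficients $p_{j,s}$ lie in $\PP$ so the Chekhov--Shapiro coefficient ring sits inside $\ZZ\PP$), all of which is consistent with what the paper leaves implicit.
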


\begin{example}\label{ex:type G2}
 Consider the rank 2 generalized seed $(\bfx,\bfy,B,\bfZ)$ over $\PP$ where $\bfx=(x_1,x_2)$, $\bfy=(y_1,y_2)$, $B=\left[\begin{array}{cc}0 & -1\\ 1 & 0\end{array}\right]$, and $\bfZ=(Z_1,Z_2)$ where $Z_1(u)=1+z_1u+z_2u^2+u^3$ and $Z_2(u)=1+u$.  In this case we have $\hat y_1=y_1 x_2$ and $\hat y_2=y_2 x_1^{-1}$.  Write $\Sigma(1)=(\bfx(1),\bfy(1),B(1),\bfZ(1))$ for the initial generalized seed $(\bfx,\bfy,B,\bfZ)$ and define seeds $\Sigma(t)$ for $t=2,\ldots,9$ inductively via the alternating mutation sequence below:
 \begin{equation}\label{eq:mut seq}
  \Sigma(1)\stackrel{\mu_1}{\longleftrightarrow}\Sigma(2)\stackrel{\mu_2}{\longleftrightarrow}\Sigma(3)\stackrel{\mu_1}{\longleftrightarrow}\Sigma(4)\stackrel{\mu_2}{\longleftrightarrow}\Sigma(5)\stackrel{\mu_1}{\longleftrightarrow}\Sigma(6)\stackrel{\mu_2}{\longleftrightarrow}\Sigma(7)\stackrel{\mu_1}{\longleftrightarrow}\Sigma(8)\stackrel{\mu_2}{\longleftrightarrow}\Sigma(9).
 \end{equation}
 Then the exchange matrices and exchange polynomials of these generalized seeds are given by
 \begin{equation*}
  B(t)=(-1)^{t+1}B,\quad\quad Z_2(t)=Z_2,\quad\quad\text{and}\quad\quad Z_1(t)=\begin{cases}Z_1 & \text{if $t$ is odd;}\\ \overline{Z}_1 & \text{if $t$ is even.}\end{cases}
 \end{equation*}
 The resulting cluster variables and coefficients are presented in Table 1.
\begin{table}
\begin{align*}
  &\begin{cases}
  x_1(1)=x_1\\
  x_2(1)=x_2
  \end{cases} 
  && \!\!\!\!\!\!\!\begin{cases}
  y_1(1)=y_1\\
  y_2(1)=y_2
  \end{cases}\\
  &\begin{cases}
  x_1(2)=x_1^{-1}\frac{1+z_1\hat y_1+z_2\hat y_1^2+\hat y_1^3}{1\oplus z_1y_1\oplus z_2y_1^2\oplus y_1^3}\\
  x_2(2)=x_2
  \end{cases} 
  && \!\!\!\!\!\!\!\begin{cases}
  y_1(2)=y_1^{-1}\\
  y_2(2)=y_2(1\oplus z_1y_1\oplus z_2y_1^2\oplus y_1^3)
  \end{cases}\\
  &\begin{cases}
  x_1(3)=x_1^{-1}\frac{1+z_1\hat y_1+z_2\hat y_1^2+\hat y_1^3}{1\oplus z_1y_1\oplus z_2y_1^2\oplus y_1^3}\\
  x_2(3)=x_2^{-1}\frac{1+\hat y_2+z_1\hat y_1\hat y_2+z_2\hat y_1^2\hat y_2+\hat y_1^3\hat y_2}{1\oplus y_2\oplus z_1y_1y_2\oplus z_2y_1^2y_2\oplus y_1^3y_2}
  \end{cases} 
  && \!\!\!\!\!\!\!\begin{cases}
  y_1(3)=y_1^{-1}(1\oplus y_2\oplus z_1y_1y_2\oplus z_2y_1^2y_2\oplus y_1^3y_2)\\
  y_2(3)=y_2^{-1}(1\oplus z_1y_1\oplus z_2y_1^2\oplus y_1^3)^{-1}
  \end{cases}\\
  &\begin{cases}
  x_1(4)=\mathrlap{x_1x_2^{-3}\frac{\substack{1+3\hat y_2+3\hat y_2^2+\hat y_2^3+2z_1\hat y_1\hat y_2+4z_1\hat y_1\hat y_2^2+2z_1\hat y_1\hat y_2^3+z_2\hat y_1^2\hat y_2+z_1^2\hat y_1^2\hat y_2^2+3z_2\hat y_1^2\hat y_2^2+z_1^2\hat y_1^2\hat y_2^3+2z_2\hat y_1^2\hat y_2^3\\+3\hat y_1^3\hat y_2^2+z_1z_2\hat y_1^3\hat y_2^2+2\hat y_1^3\hat y_2^3+2z_1z_2\hat y_1^3\hat y_2^3+z_1\hat y_1^4\hat y_2^2+2z_1\hat y_1^4\hat y_2^3+z_2^2\hat y_1^4\hat y_2^3+2z_2\hat y_1^5\hat y_2^3+\hat y_1^6\hat y_2^3}}
  {\substack{1\oplus 3y_2\oplus 3y_2^2\oplus y_2^3\oplus 2z_1y_1y_2\oplus 4z_1y_1y_2^2\oplus 2z_1y_1y_2^3\oplus z_2y_1^2y_2\oplus z_1^2y_1^2y_2^2\oplus 3z_2y_1^2y_2^2\oplus z_1^2y_1^2y_2^3\oplus 2z_2y_1^2y_2^3\\\oplus 3y_1^3y_2^2\oplus z_1z_2y_1^3y_2^2\oplus 2y_1^3y_2^3\oplus 2z_1z_2y_1^3y_2^3\oplus z_1y_1^4y_2^2\oplus 2z_1y_1^4y_2^3\oplus z_2^2y_1^4y_2^3\oplus 2z_2y_1^5y_2^3\oplus y_1^6y_2^3}}}\\
  x_2(4)=x_2^{-1}\frac{1+\hat y_2+z_1\hat y_1\hat y_2+z_2\hat y_1^2\hat y_2+\hat y_1^3\hat y_2}{1\oplus y_2\oplus z_1y_1y_2\oplus z_2y_1^2y_2\oplus y_1^3y_2}
  \end{cases}\\
  &&& \!\!\!\!\!\!\!\begin{cases}
  y_1(4)=y_1(1\oplus y_2\oplus z_1y_1y_2\oplus z_2y_1^2y_2\oplus y_1^3y_2)^{-1}\\
  y_2(4)=y_1^{-3}y_2^{-1}(1\oplus 3y_2\oplus 3y_2^2\oplus y_2^3\oplus 2z_1y_1y_2\oplus 4z_1y_1y_2^2\oplus 2z_1y_1y_2^3\\
  \quad\quad\quad\quad\oplus z_2y_1^2y_2\oplus z_1^2y_1^2y_2^2\oplus 3z_2y_1^2y_2^2\oplus z_1^2y_1^2y_2^3\oplus 2z_2y_1^2y_2^3\\
  \quad\quad\quad\quad\oplus z_1z_2y_1^3y_2^2\oplus 2z_1z_2y_1^3y_2^3\oplus 3y_1^3y_2^2\oplus 2y_1^3y_2^3\\
  \quad\quad\quad\quad\oplus z_1y_1^4y_2^2\oplus 2z_1y_1^4y_2^3\oplus z_2^2y_1^4y_2^3\oplus 2z_2y_1^5y_2^3\oplus y_1^6y_2^3)
  \end{cases}\\
  &\begin{cases}
  x_1(5)=\mathrlap{x_1x_2^{-3}\frac{\substack{1+3\hat y_2+3\hat y_2^2+\hat y_2^3+2z_1\hat y_1\hat y_2+4z_1\hat y_1\hat y_2^2+2z_1\hat y_1\hat y_2^3+z_2\hat y_1^2\hat y_2+z_1^2\hat y_1^2\hat y_2^2+3z_2\hat y_1^2\hat y_2^2+z_1^2\hat y_1^2\hat y_2^3+2z_2\hat y_1^2\hat y_2^3\\+3\hat y_1^3\hat y_2^2+z_1z_2\hat y_1^3\hat y_2^2+2\hat y_1^3\hat y_2^3+2z_1z_2\hat y_1^3\hat y_2^3+z_1\hat y_1^4\hat y_2^2+2z_1\hat y_1^4\hat y_2^3+z_2^2\hat y_1^4\hat y_2^3+2z_2\hat y_1^5\hat y_2^3+\hat y_1^6\hat y_2^3}}
  {\substack{1\oplus 3y_2\oplus 3y_2^2\oplus y_2^3\oplus 2z_1y_1y_2\oplus 4z_1y_1y_2^2\oplus 2z_1y_1y_2^3\oplus z_2y_1^2y_2\oplus z_1^2y_1^2y_2^2\oplus 3z_2y_1^2y_2^2\oplus z_1^2y_1^2y_2^3\oplus 2z_2y_1^2y_2^3\\\oplus 3y_1^3y_2^2\oplus z_1z_2y_1^3y_2^2\oplus 2y_1^3y_2^3\oplus 2z_1z_2y_1^3y_2^3\oplus z_1y_1^4y_2^2\oplus 2z_1y_1^4y_2^3\oplus z_2^2y_1^4y_2^3\oplus 2z_2y_1^5y_2^3\oplus y_1^6y_2^3}}}\\
  x_2(5)=\mathrlap{x_1x_2^{-2}\frac{1+2\hat y_2+\hat y_2^2+z_1\hat y_1\hat y_2+z_1\hat y_1\hat y_2^2+z_2\hat y_1^2\hat y_2^2+\hat y_1^3\hat y_2^2}{1\oplus 2y_2\oplus y_2^2\oplus z_1y_1y_2\oplus z_1y_1y_2^2\oplus z_2y_1^2y_2^2\oplus y_1^3y_2^2}}
  \end{cases}\\
  &&& \!\!\!\!\!\!\!\begin{cases}
  y_1(5)=y_1^{-2}y_2^{-1}(1\oplus 2y_2\oplus y_2^2\oplus z_1y_1y_2\oplus z_1y_1y_2^2\oplus z_2y_1^2y_2^2\oplus y_1^3y_2^2)\\
  y_2(5)=y_1^3y_2(1\oplus 3y_2\oplus 3y_2^2\oplus y_2^3\oplus 2z_1y_1y_2\oplus 4z_1y_1y_2^2\oplus 2z_1y_1y_2^3\\
  \quad\quad\quad\quad\oplus z_2y_1^2y_2\oplus z_1^2y_1^2y_2^2\oplus 3z_2y_1^2y_2^2\oplus z_1^2y_1^2y_2^3\oplus 2z_2y_1^2y_2^3\\
  \quad\quad\quad\quad\oplus 3y_1^3y_2^2\oplus z_1z_2y_1^3y_2^2\oplus 2y_1^3y_2^3\oplus 2z_1z_2y_1^3y_2^3\\
  \quad\quad\quad\quad\oplus z_1y_1^4y_2^2\oplus 2z_1y_1^4y_2^3\oplus z_2^2y_1^4y_2^3\oplus 2z_2y_1^5y_2^3\oplus y_1^6y_2^3)^{-1}
  \end{cases}\\
  &\begin{cases}
  x_1(6)=\mathrlap{x_1^2x_2^{-3}\frac{1+3\hat y_2+3\hat y_2^2+\hat y_2^3+z_1\hat y_1\hat y_2+2z_1\hat y_1\hat y_2^2+z_1\hat y_1\hat y_2^3+z_2\hat y_1^2\hat y_2^2+z_2\hat y_1^2\hat y_2^3+\hat y_1^3\hat y_2^3}{1\oplus 3y_2\oplus 3y_2^2\oplus y_2^3\oplus z_1y_1y_2\oplus 2z_1y_1y_2^2\oplus z_1y_1y_2^3\oplus z_2y_1^2y_2^2\oplus z_2y_1^2y_2^3\oplus y_1^3y_2^3}}\\
  x_2(6)=\mathrlap{x_1x_2^{-2}\frac{1+2\hat y_2+\hat y_2^2+z_1\hat y_1\hat y_2+z_1\hat y_1\hat y_2^2+z_2\hat y_1^2\hat y_2^2+\hat y_1^3\hat y_2^2}{1\oplus 2y_2\oplus y_2^2\oplus z_1y_1y_2\oplus z_1y_1y_2^2\oplus z_2y_1^2y_2^2\oplus y_1^3y_2^2}}
  \end{cases}\\
  &&& \!\!\!\!\!\!\!\begin{cases}
  y_1(6)=y_1^2y_2(1\oplus 2y_2\oplus y_2^2\oplus z_1y_1y_2\oplus z_1y_1y_2^2\oplus z_2y_1^2y_2^2\oplus y_1^3y_2^2)^{-1}\\
  y_2(6)=y_1^{-3}y_2^{-2}(1\oplus 3y_2\oplus 3y_2^2\oplus y_2^3\oplus z_1y_1y_2\oplus 2z_1y_1y_2^2\oplus z_1y_1y_2^3\\
  \quad\quad\quad\quad\oplus z_2y_1^2y_2^2\oplus z_2y_1^2y_2^3\oplus y_1^3y_2^3)
  \end{cases}\\
  &\begin{cases}
  x_1(7)=\mathrlap{x_1^2x_2^{-3}\frac{1+3\hat y_2+3\hat y_2^2+\hat y_2^3+z_1\hat y_1\hat y_2+2z_1\hat y_1\hat y_2^2+z_1\hat y_1\hat y_2^3+z_2\hat y_1^2\hat y_2^2+z_2\hat y_1^2\hat y_2^3+\hat y_1^3\hat y_2^3}{1\oplus 3y_2\oplus 3y_2^2\oplus y_2^3\oplus z_1y_1y_2\oplus 2z_1y_1y_2^2\oplus z_1y_1y_2^3\oplus z_2y_1^2y_2^2\oplus z_2y_1^2y_2^3\oplus y_1^3y_2^3}}\\
  x_2(7)=x_1x_2^{-1}\frac{1+\hat y_2}{1\oplus y_2}
  \end{cases}\\
  &&& \!\!\!\!\!\!\!\begin{cases}
  y_1(7)=y_1^{-1}y_2^{-1}(1\oplus y_2)\\
  y_2(7)=y_1^3y_2^2(1\oplus 3y_2\oplus 3y_2^2\oplus y_2^3\oplus z_1y_1y_2\oplus 2z_1y_1y_2^2\oplus z_1y_1y_2^3\\
  \quad\quad\quad\quad\oplus z_2y_1^2y_2^2\oplus z_2y_1^2y_2^3\oplus y_1^3y_2^3)^{-1}
  \end{cases}\\
  &\begin{cases}
  x_1(8)=x_1\\
  x_2(8)=x_1x_2^{-1}\frac{1+\hat y_2}{1\oplus y_2}
  \end{cases} 
  && \!\!\!\!\!\!\!\begin{cases}
  y_1(8)=&y_1y_2(1\oplus y_2)^{-1}\\
  y_2(8)=&y_2^{-1}
  \end{cases}\\
  &\begin{cases}
  x_1(9)=x_1\\
  x_2(9)=x_2
  \end{cases} 
  && \!\!\!\!\!\!\!\begin{cases}
  y_1(9)=&y_1\\
  y_2(9)=&y_2
  \end{cases}
 \end{align*}
\caption{Cluster variables and coefficients for the mutation sequence \eqref{eq:mut seq}.}
\end{table}
\end{example}

Following the same formal procedure as in section~\ref{sec:CA}, we may define \emph{$X$-functions} $X^t_i\in\QQ_\sf(\bfx,\bfy,\bfz)$ and \emph{$Y$-functions} $Y^t_j\in\QQ_\sf(\bfy,\bfz)$ by computing $x_i^t$ and $y_i^t$, respectively, in the field $\QQ(\bfx,\bfy,\bfz)$ where $\bfx$, $\bfy$, and $\bfz$ represent collections of formal indeterminates.  Using that $z_{i,0}=z_{i,d_i}=1$, the specialization of the $Y$-functions in the tropical semifield $\PP=\Trop(\bfy,\bfz)$ again produces monomials $Y^t_j\big|_{\Trop(\bfy,\bfz)}=\stackrel[i=1]{n}{\prod} y_i^{c^t_{ij}}$ where we write $C^t$ for the resulting matrix whose columns $\bfc^t_j\in\ZZ^n$ we continue to call \emph{$c$-vectors}.
\begin{proposition}(cf. \cite[Prop. 3.8]{nakanishi})\label{prop:GCA c-vectors}
 The $c$-vectors satisfy the following recurrence relation for $t\stackrel{k}{\text{---}}t'$:
 \begin{equation}
  c_{ij}^{t'}=\begin{cases}-c_{ik}^t & \text{if $j=k$;}\\ c_{ij}^t+c_{ik}^t[d_k b_{kj}^t]_++[-c_{ik}^t]_+ d_k b_{kj}^t & \text{if $j\ne k$.}\end{cases}
 \end{equation}
\end{proposition}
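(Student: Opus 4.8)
The plan is to read the recursion straight off the coefficient mutation \eqref{eq:GCA coefficient exchange} after specializing to the tropical semifield. Recall that the $Y$-functions $Y^t_j$ are obtained by carrying out the mutations \eqref{eq:GCA coefficient exchange} inside the universal semifield $\QQ_\sf(\bfy,\bfz)$; hence for every semifield $\PP$ the specialization homomorphism $\QQ_\sf(\bfy,\bfz)\to\PP$ intertwines the $Y$-mutations with the $\bfy$-mutations, and I apply this with $\PP=\Trop(\bfy,\bfz)$. Since by definition $Y^t_j\big|_{\Trop(\bfy,\bfz)}=\prod_i y_i^{c^t_{ij}}$, everything comes down to understanding the troublesome factor $Z_k^t\big|_\PP(y_k^t)$ tropically.

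First I would record that under generalized mutation each exchange polynomial is either fixed or has its coefficients reversed, so $z_{i,s}^t$ is, up to reindexing $s\mapsto d_i-s$, one of the original indeterminates $z_{i,s}$; in particular $z_{i,0}^t=z_{i,d_i}^t=1$ for every vertex $t$. Then, for $t\stackrel{k}{\text{---}}t'$, writing $y_k^t\big|_{\Trop(\bfy,\bfz)}=\prod_\ell y_\ell^{c^t_{\ell k}}$, I would compute
\[
Z_k^t\big|_{\Trop(\bfy,\bfz)}(y_k^t)=\bigoplus_{s=0}^{d_k} z_{k,s}^t\Big(\prod_\ell y_\ell^{c^t_{\ell k}}\Big)^{s},
\]
using that $\oplus$ in $\Trop(\bfy,\bfz)$ is the coordinatewise minimum of exponent vectors. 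Because $z_{k,0}^t=z_{k,d_k}^t=1$ while $z_{k,s}^t$ for $0<s<d_k$ is a genuine generator, the minimum in every $z$-coordinate is $0$ (attained at $s=0$), and the minimum in the $y_\ell$-coordinate is $\min_{0\le s\le d_k} s\,c^t_{\ell k}=-d_k[-c^t_{\ell k}]_+$ (attained at $s=0$ if $c^t_{\ell k}\ge 0$ and at $s=d_k$ if $c^t_{\ell k}<0$). Hence $Z_k^t\big|_{\Trop(\bfy,\bfz)}(y_k^t)=\prod_\ell y_\ell^{-d_k[-c^t_{\ell k}]_+}$; as a byproduct this also re-confirms, inductively along $\TT_n$, that $Y^t_j\big|_{\Trop(\bfy,\bfz)}$ is a monomial in the $y_i$ alone, so the $c$-vectors lie in $\ZZ^n$.

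Substituting into \eqref{eq:GCA coefficient exchange} read in $\Trop(\bfy,\bfz)$ then finishes the argument: the relation $y_k^{t'}=(y_k^t)^{-1}$ gives $c^{t'}_{ik}=-c^t_{ik}$, while for $j\ne k$ comparing $y_i$-exponents in
\[
\prod_i y_i^{c^{t'}_{ij}}=\Big(\prod_i y_i^{c^t_{ij}}\Big)\Big(\prod_i y_i^{c^t_{ik}}\Big)^{d_k[b^t_{kj}]_+}\Big(\prod_i y_i^{-d_k[-c^t_{ik}]_+}\Big)^{-b^t_{kj}}
\]
and using $d_k[b]_+=[d_kb]_+$ yields $c^{t'}_{ij}=c^t_{ij}+c^t_{ik}[d_kb^t_{kj}]_++[-c^t_{ik}]_+d_kb^t_{kj}$, as claimed. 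The only substantive step is the tropical evaluation of $Z_k^t$, where the normalization $z_{k,0}^t=z_{k,d_k}^t=1$ does all the work by forcing the two extreme monomials of $Z_k^t$ to dominate; the rest is bookkeeping with exponents.
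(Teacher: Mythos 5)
Your argument is correct. The paper itself gives no proof of this proposition, deferring to the cited reference (Nakanishi, Prop.~3.8), and your derivation is exactly the standard one that reference relies on: tropicalize the coefficient mutation \eqref{eq:GCA coefficient exchange}, observe that the normalization $z_{k,0}^t=z_{k,d_k}^t=1$ (preserved under mutation since $Z_k$ is merely fixed or reversed) forces $Z_k^t\big|_{\Trop(\bfy,\bfz)}(y_k^t)=\prod_\ell y_\ell^{-d_k[-c^t_{\ell k}]_+}$, and read off the exponents using $d_k[b]_+=[d_kb]_+$. You also correctly flag the two points that a careless write-up would skip --- that the specialization homomorphism intertwines the mutations, and that the monomiality of $Y^t_j\big|_{\Trop(\bfy,\bfz)}$ in the $y_i$ alone is part of the induction --- so nothing is missing.
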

\begin{remark}
 It immediately follows that the $c$-vectors of $\cA(\bfx,\bfy,B,\bfZ)$ do not depend on the choice of exchange polynomials $\bfZ$, only their degrees.
\end{remark}

As in section~\ref{sec:CA} the $X$-functions become particularly nice.
\begin{proposition}(cf. \cite[Prop. 3.3]{nakanishi})\label{prop:GCA X-function polynomiality}
 Each $X$-function $X^t_i$ is contained in $\ZZ[\bfx^{\pm1},\bfy,\bfz]$.
\end{proposition}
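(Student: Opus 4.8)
The plan is to mimic the classical argument for Proposition~\ref{prop:CA polynomial coefficients}, proving the stronger assertion by induction on the distance of $t$ from the initial vertex $t_0$ in $\TT_n$, working throughout over the tropical semifield $\PP = \Trop(\bfy,\bfz)$ so that everything literally lives in $\ZZ[\bfx^{\pm1},\bfy^{\pm1},\bfz^{\pm1}]$ by the Laurent Phenomenon (the Corollary following Proposition~\ref{le:pca is gca}). The base case $t=t_0$ is clear since $X^{t_0}_i = x_i$. For the inductive step, suppose $t\stackrel{k}{\text{---}}t'$ with $t$ closer to $t_0$, so $X^t_i \in \ZZ[\bfx^{\pm1},\bfy,\bfz]$ for all $i$, and we must show the same for $X^{t'}_k$ (the only variable that changes). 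Here we need the analogue of Proposition~\ref{prop:CA F-polynomials}: specializing the generalized exchange relation \eqref{eq:GCA exchange} to the tropical semifield, $X^{t'}_k$ is obtained from
\[
 X^{t'}_k X^t_k = \Bigl(\prod_{i=1}^n (X^t_i)^{[-b^t_{ik}]_+}\Bigr)^{d_k}\,\frac{Z^t_k(\hat{X}^t_k)}{Z^t_k\big|_{\Trop(\bfy,\bfz)}(y^t_k)},
\]
where $\hat X^t_k = y^t_k \prod_i (X^t_i)^{b^t_{ik}}$ and $Z^t_k(u) = \sum_{s=0}^{d_k} z^t_{k,s} u^s$ with $z^t_{k,0} = z^t_{k,d_k} = 1$ and each $z^t_{k,s}$ a monomial in $\bfz$ (since the $\bfZ$-mutation only permutes coefficients). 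The denominator $Z^t_k\big|_{\Trop}(y^t_k) = \bigoplus_s z^t_{k,s}(y^t_k)^s$ is a single monomial in $\bfy,\bfz$ — this is precisely the point of passing to the tropical semifield — so it contributes no subtractions and, crucially, no $\bfx$-dependence.

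The core of the argument is then to show two things. First, the right-hand side, after substituting the inductive Laurent expressions, is divisible by $X^t_k$ in $\ZZ[\bfx^{\pm1},\bfy,\bfz]$; this is exactly the content of the Laurent Phenomenon applied at $t'$, so $X^{t'}_k \in \ZZ[\bfx^{\pm1},\bfy^{\pm1},\bfz^{\pm1}]$ and we only have to rule out negative powers of $y_i$ and $z_{i,s}$. Second, no negative powers of the $y$'s or $z$'s survive. For the $y$-part one argues exactly as in \cite[Prop. 3.6]{fomin-zelevinsky4}: the $y$-variables $y^t_j$ over $\Trop(\bfy,\bfz)$ are Laurent \emph{monomials}, and the $c$-vector recursion of Proposition~\ref{prop:GCA c-vectors} together with the sign-coherence of $c$-vectors (which holds here since the $c$-vectors agree with those of the companion cluster algebra $\RcA$ with matrix $BD$, by the Remark after Proposition~\ref{prop:GCA c-vectors}) guarantees that in the numerator $Z^t_k(\hat X^t_k) = \sum_s z^t_{k,s}(y^t_k)^s\prod_i (X^t_i)^{s\,b^t_{ik}}$ the lowest-order-in-$\bfy$ contribution matches the monomial denominator, so the quotient is $y$-polynomial; the prefactor $\bigl(\prod_i (X^t_i)^{[-b^t_{ik}]_+}\bigr)^{d_k}$ is already $y$-polynomial by induction. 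For the $\bfz$-part the same reasoning applies: the tropical evaluation $\bigoplus_s z^t_{k,s}(y^t_k)^s$ picks out the $\min$ over $s$ of the $\bfz$-degrees, and since $z^t_{k,0}=z^t_{k,d_k}=1$ this minimum involves only the $\bfz$-degrees of the middle coefficients, each of which is dominated termwise in the numerator sum $\sum_s z^t_{k,s}(\cdots)$.

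The step I expect to be the main obstacle is the sign-coherence / lowest-term matching needed to cancel the tropical denominator — i.e. verifying that the monomial $Z^t_k\big|_{\Trop(\bfy,\bfz)}(y^t_k)$ genuinely divides the numerator polynomial in $\ZZ[\bfx^{\pm1},\bfy,\bfz]$ rather than merely in the fraction field. In the classical case this rests on the sign-coherence of $c$-vectors, a nontrivial input; here one should instead invoke the identification of the generalized $c$-vectors with those of the ordinary cluster algebra with exchange matrix $BD$ (forthcoming in the next section, or citing \cite{nakanishi}), for which sign-coherence is known from \cite{fomin-zelevinsky4} (or its subsequent proofs). Modulo that input, the divisibility argument and the exclusion of negative $\bfz$-powers are the same bookkeeping as in the classical proof, carried through the degree-$d_k$ exponents introduced by the generalized exchange relation. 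One should also note at the outset that $X^{t'}_k$ is independent of which semifield one works over — it is a fixed subtraction-free rational function — so proving polynomiality over $\Trop(\bfy,\bfz)$ establishes it universally, in particular giving the claimed containment in $\ZZ[\bfx^{\pm1},\bfy,\bfz]$.
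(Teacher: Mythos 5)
The paper itself gives no proof of this proposition: it is stated with a pointer to \cite[Prop.~3.3]{nakanishi}, so there is no in-paper argument to compare against and your attempt has to be judged on its own terms. The skeleton is right (induction on the distance from $t_0$, working over $\Trop(\bfy,\bfz)$, mutating via the tropicalized exchange relation), but the crux is misplaced. The cancellation of the tropical denominator $Z^t_k\big|_{\Trop(\bfy,\bfz)}(y^t_k)=\prod_i y_i^{-d_k[-c^t_{ik}]_+}$ against the numerator is elementary and needs no sign-coherence: after clearing it, the $s$-th term of $Z^t_k(\hat{X}^t_k)$ carries $y_i$ to the power $d_k[-c^t_{ik}]_+ + s\,c^t_{ik}$, which is nonnegative for every $0\le s\le d_k$ whatever the sign of $c^t_{ik}$, carries each $z$-generator to a nonnegative power (because $z^t_{k,0}=z^t_{k,d_k}=1$ forces the tropical minimum of the $\bfz$-exponents to be $0$), and, once combined with the prefactor $\bigl(\prod_i (X^t_i)^{[-b^t_{ik}]_+}\bigr)^{d_k}$, carries each $X^t_i$ to the nonnegative power $d_k[-b^t_{ik}]_+ + s\,b^t_{ik}$. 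So $X^{t'}_kX^t_k\in\ZZ[\bfx^{\pm1},\bfy,\bfz]$ follows from the inductive hypothesis with no further input.

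The genuine gap is the step you dispatch with ``this is exactly the content of the Laurent Phenomenon'': the division by $X^t_k$. The Laurent phenomenon only places $X^{t'}_k$ in $\ZZ[\bfx^{\pm1},\bfy^{\pm1},\bfz^{\pm1}]$; to conclude membership in $\ZZ[\bfx^{\pm1},\bfy,\bfz]$ from $X^{t'}_kX^t_k\in\ZZ[\bfx^{\pm1},\bfy,\bfz]$ you must know that $X^t_k$ is not divisible by any of the primes $y_j$, $z_{i,s}$ in the UFD $\ZZ[\bfx^{\pm1},\bfy,\bfz]$, and you never establish this. That is exactly where nontrivial input is required: either the sharpened Laurent phenomenon (cluster variables are \emph{polynomial} in the generators of the coefficient semifield, which is how \cite{fomin-zelevinsky4} actually handles Prop.~3.6, and which must be transported to the Chekhov--Shapiro setting), or an inductive proof that $X^t_k(\bfx,\boldsymbol{0},\cdot)\ne0$ --- and it is this last statement, not the tropical-denominator cancellation, that is tied to sign-coherence of $c$-vectors. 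Your appeal to sign-coherence is therefore attached to a step that does not need it, while the step that does need it (or needs the sharpened Laurent phenomenon) is left unjustified. Two smaller inaccuracies: the remark after Proposition~\ref{prop:GCA c-vectors} only asserts independence of the $c$-vectors from $\bfZ$, not their identification with companion $c$-vectors (that is Corollary~\ref{cor:c-vectors}), and it is the \emph{left} companion, with exchange matrix $DB$, whose $c$-vectors literally coincide with those of $\cA$.
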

Using essentially the same $\ZZ^n$-grading these $X$-functions will once again be homogeneous.
\begin{proposition}(cf. \cite[Prop. 3.15]{nakanishi})\label{prop:GCA g-vectors}
 Under the $\ZZ^n$-grading
 \[\deg(x_i)=\bfe_i,\quad\quad\deg(y_j)=-\bfb_j,\quad\quad\text{and}\quad\quad\deg(z_{i,s})=\boldsymbol{0},\]
 each $X$-function is homogeneous and we write $\deg\big(X^t_j\big)=\bfg^t_j=\sum\limits_{i=1}^n g^t_{ij}\bfe_i$.  Moreover, these $g$-vectors satisfy the following recurrence relation for $t\stackrel{k}{\text{---}}t'$:
 \[g^{t'}_{ij}=\begin{cases} g^t_{ij} & \text{if $j\ne k$;}\\ -g^t_{ik}+\sum\limits_{\ell=1}^n g_{i\ell}^t[-b_{\ell k}^t d_k]_+-\sum\limits_{\ell=1}^n b_{i\ell}^t[-c_{\ell k}^t d_k]_+ & \text{if $j=k$.}\end{cases}\]
\end{proposition}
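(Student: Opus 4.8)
The plan is to imitate the proof of the classical statement, Proposition~\ref{prop:CA g-vectors} (namely \cite[Prop.~6.1, Prop.~6.6]{fomin-zelevinsky4}), by induction on the distance of $t$ from $t_0$ in $\TT_n$; throughout, Proposition~\ref{prop:GCA X-function polynomiality} guarantees $X^t_i\in\ZZ[\bfx^{\pm1},\bfy,\bfz]$, so that homogeneity is a statement about the indicated $\ZZ^n$-graded ring. The base case is immediate, since $X^{t_0}_j=x_j$ has degree $\bfe_j=\bfg^{t_0}_j$. For the inductive step, suppose $t\stackrel{k}{\text{---}}t'$ with the claim known at $t$; then $X^{t'}_j=X^t_j$ and $\bfg^{t'}_j=\bfg^t_j$ for $j\ne k$, so everything reduces to the degree of $X^{t'}_k$, which the exchange relation \eqref{eq:GCA exchange} (written at the seed $\Sigma^t$, in the formal setting with $\PP=\Trop(\bfy,\bfz)$) gives as
\[X^{t'}_k=\frac{1}{X^t_k}\bigg(\prod_{i=1}^n (X^t_i)^{[-b^t_{ik}]_+}\bigg)^{d_k}\frac{Z^t_k(\hat y^t_k)}{Z^t_k\big|_{\Trop(\bfy,\bfz)}(y^t_k)}.\]

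The only feature not already present in the classical computation is the numerator $Z^t_k(\hat y^t_k)=\sum_{s=0}^{d_k} z^t_{k,s}(\hat y^t_k)^s$, and the point is that it is still homogeneous of degree $\boldsymbol{0}$, just as $1+\hat y_k$ is classically. To see this I would first record that every $\hat y$-variable has degree $\boldsymbol{0}$: at $t_0$ one has $\deg\hat y_i=\deg y_i+\sum_\ell b_{\ell i}\deg x_\ell=-\bfb_i+\bfb_i=\boldsymbol{0}$, and the $\hat y$-mutation rule \eqref{eq:GCA hat coefficient exchange} preserves this, since the only new factor $Z_k(\hat y_k)$ is itself a nonzero sum of degree-$\boldsymbol{0}$ monomials (using $\deg z_{i,s}=\boldsymbol{0}$). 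A straightforward induction then yields $\deg\hat y^t_j=\boldsymbol{0}$ for all $t,j$, hence $\deg Z^t_k(\hat y^t_k)=\boldsymbol{0}$.

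Next I would compute the scalar denominator. Using $y^t_k\big|_{\Trop(\bfy,\bfz)}=\prod_i y_i^{c^t_{ik}}$, the normalization $z^t_{k,0}=z^t_{k,d_k}=1$, and the fact that $z^t_{k,1},\dots,z^t_{k,d_k-1}$ are pairwise distinct free generators of $\Trop(\bfy,\bfz)$ (each $z^t_{k,s}$ is either $z_{k,s}$ or $z_{k,d_k-s}$, by the rule $Z'_k=\overline{Z}_k$), the tropical sum $\bigoplus_{s=0}^{d_k} z^t_{k,s}\prod_i y_i^{sc^t_{ik}}$ has trivial $\bfz$-part and $y_i$-exponent $\min_{0\le s\le d_k} sc^t_{ik}=\min(0,d_kc^t_{ik})=-d_k[-c^t_{ik}]_+$. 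Hence $Z^t_k\big|_{\Trop(\bfy,\bfz)}(y^t_k)=\prod_i y_i^{-d_k[-c^t_{ik}]_+}$, which has degree $d_k\sum_i[-c^t_{ik}]_+\bfb_i$, where $\bfb_i$ is the $i$-th column of the initial matrix $B$.

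Combining these and cancelling the degree-$\boldsymbol{0}$ numerator, the exchange relation yields
\[\deg X^{t'}_k=-\bfg^t_k+d_k\sum_{i=1}^n[-b^t_{ik}]_+\bfg^t_i-d_k\sum_{i=1}^n[-c^t_{ik}]_+\bfb_i,\]
and since $d_k[-b^t_{ik}]_+=[-d_kb^t_{ik}]_+$, $d_k[-c^t_{ik}]_+=[-d_kc^t_{ik}]_+$, and $\bfb_i=\sum_\ell b_{\ell i}\bfe_\ell$, this is exactly the claimed recursion for $\bfg^{t'}_k$. This proves homogeneity and the recursion simultaneously, closing the induction. The one place I expect to need real care is the denominator computation---specifically the observation that the intermediate coefficients $z^t_{k,1},\dots,z^t_{k,d_k-1}$ are distinct free variables and hence tropicalize to trivial exponents---since everything else is the argument of \cite{fomin-zelevinsky4} with $b^t_{ik}$ and $c^t_{ik}$ uniformly rescaled by $d_k$.
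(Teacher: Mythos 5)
The paper does not actually prove this proposition---it is quoted from \cite[Prop.~3.15]{nakanishi}---so there is no in-text argument to compare against; judged on its own, your proof is correct and is the natural adaptation of the Fomin--Zelevinsky degree argument. The two genuinely new ingredients are handled properly: the numerator $Z^t_k(\hat y^t_k)$ is homogeneous of degree $\boldsymbol{0}$ because every $\hat y^t_j$ has degree $\boldsymbol{0}$ (your induction via \eqref{eq:GCA hat coefficient exchange} is valid, the base case using $\deg y_i+\sum_\ell b_{\ell i}\deg x_\ell=-\bfb_i+\bfb_i=\boldsymbol{0}$), and the tropical denominator collapses to $\prod_i y_i^{-d_k[-c^t_{ik}]_+}$ with trivial $\bfz$-part. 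For the latter, note that the distinctness of $z^t_{k,1},\dots,z^t_{k,d_k-1}$ is not even needed: the $s=0$ term of the tropical sum already has every $z$-generator appearing with exponent $0$, and no term has a negative $z$-exponent, so the minimum is $0$ regardless. One point worth flagging: your computation produces the \emph{initial} exchange matrix entries $b_{i\ell}=b^{t_0}_{i\ell}$ in the last sum (since the grading fixes $\deg(y_\ell)=-\bfb_\ell$ once and for all), whereas the proposition as printed writes $b^t_{i\ell}$. Your version is the right one---it agrees with \cite[Prop.~6.6]{fomin-zelevinsky4} and can be checked against the transition from $\Sigma(4)$ to $\Sigma(5)$ in Table 2, where using $B(4)=-B$ in that sum gives the wrong $G(5)$---so the superscript $t$ in the statement should be read as $t_0$, and your claim that you recover ``exactly the claimed recursion'' should be qualified accordingly.
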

\begin{remark}
 It immediately follows that the $g$-vectors of $\cA(\bfx,\bfy,B,\bfZ)$ also do not depend on the choice of exchange polynomials $\bfZ$, only their degrees.
\end{remark}

Continuing to follow the developments of section~\ref{sec:CA} we may define $F$-polynomials $ F^t_i(\bfy,\bfz)\in\ZZ[\bfy,\bfz]$ by specializing all cluster variables $ x_i$ to $1$ in the $X$-functions, i.e. $F^t_i(\bfy,\bfz)=X^t_i(\boldsymbol{1},\bfy,\bfz)$.
\begin{proposition}(cf. \cite[Prop. 3.12]{nakanishi})\label{prop:GCA F-polynomials}
 The $F$-polynomials satisfy the following recurrence relation for $t\stackrel{k}{\text{---}}t'$:
 \begin{equation}
   F_j^{t'}=\begin{cases} F_j^t & \text{if $j\ne k$;}\\ \displaystyle\big(F_k^t\big)^{-1}\bigg(\stackrel[i=1]{n}{\prod}y_i^{[-c_{ik}^t]_+}\big(F_i^t\big)^{[-b_{ik}^t]_+}\bigg)^{d_k}Z_k\bigg(\stackrel[i=1]{n}{\prod} y_i^{c_{ik}^t}\big(F_i^t\big)^{b_{ik}^t}\bigg) & \text{if $j=k$.}\end{cases}
 \end{equation}
\end{proposition}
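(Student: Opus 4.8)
The plan is to obtain the recursion by specializing, at $\bfx=\boldsymbol 1$, the exchange relation \eqref{eq:GCA exchange} at the level of $X$-functions, mirroring the classical argument \cite[Prop.~5.1]{fomin-zelevinsky4}. Fix the initial generalized seed and work throughout over $\PP=\Trop(\bfy,\bfz)$ with $\bfx$ a tuple of formal indeterminates, so that the cluster variables of the resulting generalized cluster pattern are exactly the $X$-functions and $F^t_i(\bfy,\bfz)=X^t_i(\boldsymbol 1,\bfy,\bfz)$. First, the case $j\ne k$ is immediate: generalized seed mutation fixes $x^{t'}_j=x^t_j$, hence $X^{t'}_j=X^t_j$ and $F^{t'}_j=F^t_j$. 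So assume $j=k$. Writing \eqref{eq:GCA exchange} at the seed $\Sigma^t$ in direction $k$ at the level of $X$-functions gives
\[
X^{t'}_k=\big(X^t_k\big)^{-1}\bigg(\prod_{i=1}^n (X^t_i)^{[-b^t_{ik}]_+}\bigg)^{d_k}\frac{Z^t_k\big(\hat y^t_k\big)}{Z^t_k\big|_\PP\big(y^t_k\big)},\qquad \hat y^t_k=y^t_k\prod_{i=1}^n (X^t_i)^{b^t_{ik}},
\]
where the coefficient $y^t_k\in\PP$ equals $\prod_{i=1}^n y_i^{c^t_{ik}}$ by the definition of the $c$-vectors (recall $\PP=\Trop(\bfy,\bfz)$). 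Setting $\bfx=\boldsymbol 1$ replaces each $X^t_i$ by $F^t_i$ and $\hat y^t_k$ by $\prod_{i=1}^n y_i^{c^t_{ik}}\big(F^t_i\big)^{b^t_{ik}}$, so everything will follow once we evaluate the tropical denominator $Z^t_k\big|_\PP\big(\prod_i y_i^{c^t_{ik}}\big)$.

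That evaluation is the one step requiring care. Write $Z^t_k(u)=\sum_{s=0}^{d_k}z^t_{k,s}u^s$; by Definition~\ref{def:GCA mutation} each $z^t_{k,s}$ is, up to the reversal $z^t_{k,s}=z_{k,d_k-s}$, one of the free generators $z_{k,s}$ of $\Trop(\bfy,\bfz)$, and the normalization $z_{k,0}=z_{k,d_k}=1$ forces the $s=0$ and $s=d_k$ terms to involve no $\bfz$ while every intermediate term $0<s<d_k$ carries a genuine $\bfz$-generator to the first power. Since the auxiliary addition of a tropical semifield is the componentwise minimum of exponent vectors, the $\bfz$-exponent of $Z^t_k|_\PP(\prod_i y_i^{c^t_{ik}})$ is the minimum of $\boldsymbol 0$ and finitely many nonnegative vectors, hence $\boldsymbol 0$; and its $y_i$-exponent is $\min_{0\le s\le d_k}(s\,c^t_{ik})$, which by monotonicity in $s$ equals $\min(0,d_k c^t_{ik})=-d_k[-c^t_{ik}]_+$. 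Therefore $Z^t_k\big|_\PP\big(\prod_{i=1}^n y_i^{c^t_{ik}}\big)=\big(\prod_{i=1}^n y_i^{[-c^t_{ik}]_+}\big)^{-d_k}$.

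Substituting this back into the $\bfx=\boldsymbol 1$ specialization of the exchange relation and gathering the $d_k$-th powers yields
\[
F^{t'}_k=\big(F^t_k\big)^{-1}\bigg(\prod_{i=1}^n y_i^{[-c^t_{ik}]_+}\big(F^t_i\big)^{[-b^t_{ik}]_+}\bigg)^{d_k}Z^t_k\bigg(\prod_{i=1}^n y_i^{c^t_{ik}}\big(F^t_i\big)^{b^t_{ik}}\bigg),
\]
which is the asserted formula (the exchange polynomial occurring is the one $Z^t_k$ attached to $\Sigma^t$). The main — and essentially only — obstacle is the tropical computation in the previous paragraph, in which the hypotheses $z_{i,0}=z_{i,d_i}=1$ are used in an essential way to see that the denominator is $\bfz$-free; the rest is bookkeeping around the definitions of the $X$-functions and $c$-vectors. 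An alternative would be to propagate the formula by a direct induction along $\TT_n$ using Proposition~\ref{prop:GCA c-vectors}, but the one-step specialization above is cleaner and also makes transparent why the $\bfz$-dependence of the denominator drops out.
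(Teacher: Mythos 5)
Your argument is correct, and since the paper offers no proof of this proposition---it is imported from \cite[Prop.~3.12]{nakanishi}---there is nothing to compare it against except the expected route, which is exactly the one you take: the direct analogue of \cite[Prop.~5.1]{fomin-zelevinsky4}, specializing the exchange relation \eqref{eq:GCA exchange} for the pattern over $\Trop(\bfy,\bfz)$ at $\bfx=\boldsymbol{1}$. You correctly isolate the one nontrivial step, the tropical evaluation $Z^t_k\big|_{\Trop(\bfy,\bfz)}\big(\prod_i y_i^{c^t_{ik}}\big)=\prod_i y_i^{-d_k[-c^t_{ik}]_+}$, and your use of $z_{k,0}=z_{k,d_k}=1$ to see that the $\bfz$-dependence drops out is precisely where the hypothesis on $\bfZ$ enters (the same hypothesis is what makes the tropical $Y$-specialization a pure $\bfy$-monomial, which you also quietly use when writing $y^t_k=\prod_i y_i^{c^t_{ik}}$). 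One point in your write-up is worth emphasizing rather than burying in a parenthesis: the exchange polynomial occurring in the recursion is $Z^t_k$, the one attached to $\Sigma^t$, not the initial $Z_k$. Because this paper assumes only the power condition and not reciprocity, $Z^t_k$ may equal $\overline{Z}_k\ne Z_k$, and applying the recursion twice along $t\stackrel{k}{\text{---}}t'\stackrel{k}{\text{---}}t$ shows that the version with a fixed initial $Z_k$ would force $\overline{Z}_k=Z_k$ and hence be inconsistent in general; so the ``$Z_k$'' in the statement must be read as $Z^t_k$, exactly as your derivation produces it.
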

\erase{
\begin{proposition}(cf. \cite[Prop. 3.12]{nakanishi})\label{prop:GCA F-polynomials}
 The $F$-polynomials satisfy the following recurrence relation for $t\stackrel{k}{\text{---}}t'$:
 \begin{equation}
   F_j^{t'}=\begin{cases} F_j^t & \text{if $j\ne k$;}\\ \displaystyle\big( F_k^t\big)^{-1}\bigg(\stackrel[i=1]{n}{\prod}  y_i^{[- c_{ik}^t]_+}\big( F_i^t\big)^{[- b_{ik}^t]_+}\bigg)^{d_k}\sum\limits_{s=0}^{d_k} z_{k,s}\bigg(\stackrel[i=1]{n}{\prod}  y_i^{ c_{ik}^t}\big( F_i^t\big)^{ b_{ik}^t}\bigg)^s & \text{if $j=k$.}\end{cases}
 \end{equation}
\end{proposition}}

The coefficients $ y^t_j$ can still be computed using the $c$-vectors and $F$-polynomials.
\begin{theorem}(cf. \cite[Th. 3.23]{nakanishi})\label{th:GCA y-variables}
 Fix an initial generalized seed $(\bfx,\bfy,B,\bfZ)$ over a semifield $\PP$.  For any vertex $t\in\TT_n$ each coefficient $y^t_j$ of $\cA(\bfx,\bfy,B,\bfZ)$ can be computed as
 \begin{equation}
  y_j^t=\bigg(\stackrel[i=1]{n}{\prod}y_i^{c_{ij}^t}\bigg)\stackrel[i=1]{n}{\prod} F_i^t\big|_\PP(\bfy,\bfz)^{b_{ij}^t}.
 \end{equation}
\end{theorem}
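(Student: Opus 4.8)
The plan is to follow the classical proof of the separation-of-additions formula for coefficients \cite[Prop. 3.13]{fomin-zelevinsky4} (cf. \cite[Th. 3.23]{nakanishi}), arguing by induction on the distance from the root $t_0$ in $\TT_n$. Introduce the candidate expression
\[\tilde y_j^t:=\bigg(\prod_{i=1}^n y_i^{c_{ij}^t}\bigg)\prod_{i=1}^n F_i^t\big|_\PP(\bfy,\bfz)^{b_{ij}^t}\in\PP,\]
which is well defined since each $F_i^t$ is subtraction-free and $\PP$ is a semifield, and the task is to prove $\tilde y_j^t=y_j^t$. At $t_0$ one has $c_{ij}^{t_0}=\delta_{ij}$ and $F_i^{t_0}=1$, so $\tilde y_j^{t_0}=y_j=y_j^{t_0}$. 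Thus it suffices to show that whenever $t\stackrel{k}{\text{---}}t'$ the tuple $(\tilde y_j^{t'})_j$ is obtained from $(\tilde y_j^{t})_j$ by the coefficient mutation \eqref{eq:GCA coefficient exchange} in direction $k$ relative to $B^t$; combined with the inductive hypothesis $\tilde y^t=y^t$ and the definition of the generalized cluster pattern this yields $\tilde y^{t'}=y^{t'}$. The direction $j=k$ is immediate: since $b_{kk}^t=b_{kk}^{t'}=0$ the mutated polynomial $F_k^{t'}$ never appears in $\tilde y_k^{t'}$, and $c_{ik}^{t'}=-c_{ik}^t$, $b_{ik}^{t'}=-b_{ik}^t$ give $\tilde y_k^{t'}=(\tilde y_k^t)^{-1}$.

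For $j\ne k$ the mechanism is the following. By Proposition~\ref{prop:GCA F-polynomials} the argument of the exchange polynomial $Z_k$ in the recursion for $F_k^{t'}$ is $\prod_{i=1}^n y_i^{c_{ik}^t}(F_i^t)^{b_{ik}^t}$, which under $|_\PP$ becomes exactly $\tilde y_k^t$; hence
\[F_k^{t'}\big|_\PP=\big(F_k^t\big|_\PP\big)^{-1}\bigg(\prod_{i=1}^n y_i^{[-c_{ik}^t]_+}\big(F_i^t\big|_\PP\big)^{[-b_{ik}^t]_+}\bigg)^{d_k}Z_k\big|_\PP\big(\tilde y_k^t\big).\]
I would substitute this, together with the $c$-vector recursion of Proposition~\ref{prop:GCA c-vectors} (using $[d_k b_{kj}^t]_+=d_k[b_{kj}^t]_+$) and the matrix mutation \eqref{eq:GCA matrix mutation}, into $\tilde y_j^{t'}=(\prod_i y_i^{c_{ij}^{t'}})\prod_i(F_i^{t'}\big|_\PP)^{b_{ij}^{t'}}$, and compare with the claimed value $\tilde y_j^t\big((\tilde y_k^t)^{d_k}\big)^{[b_{kj}^t]_+}Z_k\big|_\PP(\tilde y_k^t)^{-b_{kj}^t}$. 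The factor $Z_k\big|_\PP(\tilde y_k^t)^{-b_{kj}^t}$ appears on both sides and cancels; what remains on either side is a Laurent monomial in the $y_i$ and the $F_i^t\big|_\PP$, and the equality of the three families of exponents is a routine check using only $b_{kk}^t=0$ together with the elementary identities $[x]_+-[-x]_+=x$ and $x+[-x]_+=[x]_+$ — for instance the exponent of $F_i^t\big|_\PP$ with $i\ne k$ collapses from $b_{ij}^{t'}-[-b_{ik}^t]_+d_k b_{kj}^t$ to $b_{ij}^t+b_{ik}^t d_k[b_{kj}^t]_+$, which matches the coefficient produced on the other side by $((\tilde y_k^t)^{d_k})^{[b_{kj}^t]_+}$, while the bare $y_i$-exponents match after the term $[-c_{ik}^t]_+d_k b_{kj}^t$ from the $c$-recursion is cancelled by its counterpart in $F_k^{t'}\big|_\PP$.

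The main obstacle is organizational rather than conceptual: one must keep careful track of the three kinds of factors — bare coefficients $y_i$, powers of the $F_i^t\big|_\PP$, and the single power of $Z_k\big|_\PP(\tilde y_k^t)$ — as the degrees $d_k$ thread through the $c$-, $b$-, and $F$-recursions. It is worth noting that, exactly as in \cite[Prop. 3.13]{fomin-zelevinsky4}, this argument uses no sign-coherence of the $c$-vectors; it is a formal identity whose only ingredients beyond the three recursions are $b_{kk}^t=0$ and the above $\max$-identities, the sole new features relative to the classical case being the extra factors $d_k$ and the replacement of $1+(\cdot)$ by $Z_k(\cdot)$. If one wishes to sidestep manipulating the auxiliary addition, the whole computation may instead be carried out at the level of the $Y$-functions in the universal semifield $\QQ_\sf(\bfy,\bfz)$ — where ordinary addition agrees with $\oplus$ — and then transported to an arbitrary $\PP$ along the specialization homomorphism $\QQ_\sf(\bfy,\bfz)\to\PP$.
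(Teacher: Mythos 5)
Your argument is correct: the paper itself gives no proof of this statement, deferring to \cite[Th. 3.23]{nakanishi}, and your induction is precisely the standard one — the base case at $t_0$, the observation that the argument of $Z_k$ in the $F$-polynomial recursion specializes to $\tilde y_k^t$, and the exponent bookkeeping via $[x]_+-[-x]_+=x$ and $b_{kk}^t=0$ all check out, so the tuple $(\tilde y_j^t)$ satisfies the mutation rule \eqref{eq:GCA coefficient exchange} and hence coincides with $(y_j^t)$. The only point worth making explicit is that the polynomial appearing in both \eqref{eq:GCA coefficient exchange} and Proposition~\ref{prop:GCA F-polynomials} at the step $t\stackrel{k}{\text{---}}t'$ is the exchange polynomial $Z_k^t$ of the seed at $t$ (which may be $\overline{Z}_k$ rather than $Z_k$); since the same polynomial occurs on both sides of your comparison, this does not affect the proof.
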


Finally the separation of additions formula still holds for cluster variables of $\cA(\bfx,\bfy,B,\bfZ)$.
\begin{theorem}(cf. \cite[Th. 3.24]{nakanishi})\label{th:GCA x-variables}
 Fix an initial generalized seed $(\bfx,\bfy,B,\bfZ)$ over a semifield $\PP$.  For any vertex $t\in\TT_n$ each cluster variable $ x^t_j$ of $\cA(\bfx,\bfy,B,\bfZ)$ can be computed as
 \begin{equation}
  x_j^t=\bigg(\stackrel[i=1]{n}{\prod} x_i^{g_{ij}^t}\bigg)\frac{F_j^t\big|_\cF(\hat{\bfy},\bfz)}{F_j^t\big|_\PP(\bfy,\bfz)},
 \end{equation}
 where $\displaystyle\hat{y}_k=y_k\prod_{i=1}^n x_i^{b_{ik}}$.
\end{theorem}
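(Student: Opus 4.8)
The plan is to mirror the two-step proof of the ordinary separation formula (Theorem~\ref{th:CA x-variables}): first isolate the $\hat\bfy$-dependence from the homogeneity of the $X$-functions, then control the discrepancy between the tropical evaluation $X^t_j$ and the genuine cluster variable $x^t_j$ by an induction along $\TT_n$. At the outset it is convenient to reduce to the universal semifield $\PP=\QQ_\sf(\bfy,\bfz)$: for an arbitrary $\PP$ with actual coefficients there is a specialization homomorphism $\QQ_\sf(\bfy,\bfz)\to\PP$, and since $x^t_j$ (as a function of $\bfx$ and the initial coefficients), the $F$-polynomials, and $\hat y_i=y_i\prod_{\ell}x_\ell^{b_{\ell i}}$ are all produced by subtraction-free operations, it suffices to prove the identity over $\QQ_\sf(\bfy,\bfz)$ and then apply the specialization; over this $\PP$ one has $F^t_j|_\PP(\bfy,\bfz)=F^t_j(\bfy,\bfz)$ literally.

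Next I would record the homogeneity factorization of the $X$-functions. By Proposition~\ref{prop:GCA X-function polynomiality}, $X^t_j\in\ZZ_{\ge0}[\bfx^{\pm1},\bfy,\bfz]$, and by Proposition~\ref{prop:GCA g-vectors} it is homogeneous of degree $\bfg^t_j$ for the grading with $\deg x_i=\bfe_i$, $\deg y_j=-\bfb_j$, $\deg z_{i,s}=\boldsymbol0$. Since the $x_i$ carry distinct degrees, every monomial of $X^t_j$ has the form $\gamma\,\bfx^{\bfg^t_j+\sum_i m_i\bfb_i}\bfy^{\bfm}\bfz^{\bfp}$ with $\gamma\in\ZZ_{\ge0}$; factoring out $\bfx^{\bfg^t_j}$, using $y_i\prod_{\ell}x_\ell^{b_{\ell i}}=\hat y_i$, and recognizing the residual sum as $F^t_j(\bfy,\bfz)=X^t_j(\boldsymbol1,\bfy,\bfz)$ evaluated at $\hat\bfy$ gives the identity
\[
X^t_j=\Big(\prod_{i=1}^n x_i^{g^t_{ij}}\Big)F^t_j(\hat\bfy,\bfz),
\]
an identity of subtraction-free rational expressions, hence valid after evaluation in $\cF$ with the genuine $\bfx$ and $\bfy,\bfz$. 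Consequently the theorem is equivalent to the assertion $x^t_j\cdot F^t_j|_\PP(\bfy,\bfz)=X^t_j\big|_\cF$, i.e.\ that multiplying the genuine cluster variable by the specialized $F$-polynomial recovers the tropical $X$-function.

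This last identity I would prove by induction on the distance of $t$ from $t_0$ in $\TT_n$. The base case is immediate ($F^{t_0}_j=1$, $X^{t_0}_j=x_j$), and for $t\stackrel{k}{\text{---}}t'$ with $j\ne k$ nothing changes. For $j=k$ one substitutes the exchange relation~\eqref{eq:GCA exchange} for $x^{t'}_k$ (over $\PP$) and the analogous tropical recursion for $X^{t'}_k$, inserts the inductive hypothesis for the $x^t_i$, and matches the two sides. The monomial $\prod_i x_i^{g^t_{ik}}$-part is governed by the $g$-vector recursion (Proposition~\ref{prop:GCA g-vectors}), which is the same formal bookkeeping as in the ordinary case and uses only $\deg z_{i,s}=\boldsymbol0$. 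The polynomial part is handled by the $F$-polynomial recursion (Proposition~\ref{prop:GCA F-polynomials}): after rewriting $\hat y^t_k=y^t_k\prod_i(x^t_i)^{b^t_{ik}}$ through the $\hat y$-mutation~\eqref{eq:GCA hat coefficient exchange}, Theorem~\ref{th:GCA y-variables} identifies $\prod_i y_i^{c^t_{ik}}\prod_i F^t_i|_\PP(\bfy,\bfz)^{b^t_{ik}}$ with $y^t_k$, so that the argument of $Z_k$ in the $F$-polynomial recursion becomes $y^t_k$ and its $\PP$-specialization becomes precisely the denominator $Z^t_k|_\PP(y^t_k)$ appearing in~\eqref{eq:GCA exchange}.

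I expect the main obstacle to be exactly this $j=k$ matching. Two delicate points arise: first, the exchange polynomial at $t$ in direction $k$ is $Z^t_k\in\{Z_k,\overline Z_k\}$ with $\overline Z_k(u)=u^{d_k}Z_k(u^{-1})$, so one must track the alternation of the $Z$'s together with the sign of the exponents of $y^t_k$ --- equivalently the sign-coherence of $\bfc^t_{\cdot k}$, cf.\ Proposition~\ref{prop:GCA c-vectors} --- to confirm that $Z_k$ evaluated at $\prod_i y_i^{c^t_{ik}}\prod_i F^t_i|_\PP(\bfy,\bfz)^{b^t_{ik}}$ and $Z^t_k|_\PP(y^t_k)$ agree up to the expected monomial; second, one must check that this monomial together with the $\big(\prod_i(x^t_i)^{[-b^t_{ik}]_+}\big)^{d_k}$ prefactor is absorbed correctly by the $g$-vector recursion. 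The homogeneity factorization above and the universal-semifield reduction are routine, and the remaining computation is the same as in \cite[Cor.~6.3]{fomin-zelevinsky4}, modified only by the degree-$d_k$ powers and by the polynomial $Z_k$ replacing the binomial $1+\hat y_k$.
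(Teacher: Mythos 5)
The paper itself does not prove this theorem; it is imported verbatim from \cite[Th.~3.24]{nakanishi}, so there is no internal argument to compare against. Your outline reconstructs the standard two-step proof used there and in \cite[Cor.~6.3]{fomin-zelevinsky4}: the homogeneity factorization $X^t_j=\big(\prod_i x_i^{g^t_{ij}}\big)F^t_j(\hat\bfy,\bfz)$ (which needs only Propositions~\ref{prop:GCA X-function polynomiality} and~\ref{prop:GCA g-vectors}, not the positivity $\ZZ_{\ge0}$ you assert in passing), followed by the reduction to the universal semifield and the inductive identity $x^t_j\,F^t_j\big|_\PP(\bfy,\bfz)=X^t_j\big|_\cF$. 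This is the right route and the skeleton is sound. Two corrections on the part you flag as delicate. First, sign-coherence of the $c$-vectors is not what is needed, and it certainly does not follow from Proposition~\ref{prop:GCA c-vectors}, which is only a recursion; what actually drives the $j=k$ step is the elementary tropical evaluation
\[
Z^t_k\big|_{\Trop(\bfy,\bfz)}\Big(\prod_i y_i^{c^t_{ik}}\Big)=\prod_i y_i^{-d_k[-c^t_{ik}]_+},
\]
which uses only $z^t_{k,0}=z^t_{k,d_k}=1$ and holds for any signs of the $c^t_{ik}$. Second, the genuine crux --- matching the mutated exchange polynomial $Z^t_k\in\{Z_k,\overline Z_k\}$ appearing in \eqref{eq:GCA exchange} against the fixed initial $Z_k$ appearing in Proposition~\ref{prop:GCA F-polynomials}, via $\overline Z_k(u)=u^{d_k}Z_k(u^{-1})$ and the absorption of the resulting monomial into the $g$-vector recursion --- is named but not carried out in your sketch, so what you have is a correct plan whose hardest computation is deferred to the cited source rather than completed.
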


\begin{example}
 Following Theorems~\ref{th:GCA y-variables} and~\ref{th:GCA x-variables} we may immediately extract the $C$-matrix, $G$-matrix, and $F$-polynomials associated to each of the seeds $\Sigma(t)$ in Example~\ref{ex:type G2}.  Writing $C(t)$, $G(t)$, and $F(t)$ for these quantities associated to the generalized seed $\Sigma(t)$ we obtain Table 2.
 \begin{table}
 \begin{align*}
  &C(1)=\left(\!\!\begin{array}{cc}1&0\\0&1\end{array}\!\!\right), && G(1)=\left(\!\!\begin{array}{cc}1&0\\0&1\end{array}\!\!\right), && \begin{cases}F_1(1)=1\\ F_2(1)=1\end{cases}\\
  &C(2)=\left(\!\begin{array}{cc}\!\!-1&0\\0&1\end{array}\!\!\right), && G(2)=\left(\!\begin{array}{cc}\!\!-1&0\\0&1\end{array}\!\!\right), && \begin{cases}F_1(2)=1+z_1y_1+z_2y_1^2+y_1^3\\ F_2(2)=1\end{cases}\\
  &C(3)=\left(\!\begin{array}{cc}\!\!-1&0\\0&\!\!-1\end{array}\!\!\!\right), && G(3)=\left(\!\begin{array}{cc}\!\!-1&0\\0&\!\!-1\end{array}\!\!\!\right), && \begin{cases}F_1(3)=1+z_1y_1+z_2y_1^2+y_1^3\\ F_2(3)=1+y_2+z_1y_1y_2+z_2y_1^2y_2+y_1^3y_2\end{cases}\\
  &C(4)=\left(\!\!\begin{array}{cc}1&-3\\0&-1\end{array}\!\!\right), && G(4)=\left(\!\begin{array}{cc}1&0\\ \!\!-3&\!\!-1\end{array}\!\!\!\right), && \begin{cases}F_1(4)=1+3y_2+3y_2^2+y_2^3+2z_1y_1y_2+4z_1y_1y_2^2+2z_1y_1y_2^3\\
  \quad\quad\quad\quad+z_2y_1^2y_2+z_1^2y_1^2y_2^2+3z_2y_1^2y_2^2+z_1^2y_1^2y_2^3+2z_2y_1^2y_2^3\\
  \quad\quad\quad\quad+z_1z_2y_1^3y_2^2+2z_1z_2y_1^3y_2^3+3y_1^3y_2^2+2y_1^3y_2^3\\
  \quad\quad\quad\quad+z_1y_1^4y_2^2+2z_1y_1^4y_2^3+z_2^2y_1^4y_2^3+2z_2y_1^5y_2^3+y_1^6y_2^3\\ F_2(4)=1+y_2+z_1y_1y_2+z_2y_1^2y_2+y_1^3y_2\end{cases}\\
  &C(5)=\left(\!\!\!\begin{array}{cc}-2&3\\-1&1\end{array}\!\!\right), && G(5)=\left(\!\begin{array}{cc}1&1\\ \!\!-3&\!\!-2\end{array}\!\!\!\right), && \begin{cases}F_1(5)=1+3y_2+3y_2^2+y_2^3+2z_1y_1y_2+4z_1y_1y_2^2+2z_1y_1y_2^3\\
  \quad\quad\quad\quad+z_2y_1^2y_2+z_1^2y_1^2y_2^2+3z_2y_1^2y_2^2+z_1^2y_1^2y_2^3+2z_2y_1^2y_2^3\\
  \quad\quad\quad\quad+z_1z_2y_1^3y_2^2+2z_1z_2y_1^3y_2^3+3y_1^3y_2^2+2y_1^3y_2^3\\
  \quad\quad\quad\quad+z_1y_1^4y_2^2+2z_1y_1^4y_2^3+z_2^2y_1^4y_2^3+2z_2y_1^5y_2^3+y_1^6y_2^3\\ F_2(5)=1+2y_2+y_2^2+z_1y_1y_2+z_1y_1y_2^2+z_2y_1^2y_2^2+y_1^3y_2^2\end{cases}\\
  &C(6)=\left(\!\!\begin{array}{cc}2&-3\\1&-2\end{array}\!\!\right), && G(6)=\left(\!\!\begin{array}{cc}2&1\\ \!\!-3&\!\!-2\end{array}\!\!\!\right), && \begin{cases}F_1(6)=1+3y_2+3y_2^2+y_2^3+z_1y_1y_2+2z_1y_1y_2^2+z_1y_1y_2^3\\
  \quad\quad\quad\quad+z_2y_1^2y_2^2+z_2y_1^2y_2^3+y_1^3y_2^3\\ F_2(6)=1+2y_2+y_2^2+z_1y_1y_2+z_1y_1y_2^2+z_2y_1^2y_2^2+y_1^3y_2^2\end{cases}\\
  &C(7)=\left(\!\!\!\begin{array}{cc}-1&3\\-1&2\end{array}\!\!\right), && G(7)=\left(\!\!\begin{array}{cc}2&1\\ \!\!-3&\!\!-1\end{array}\!\!\!\right), && \begin{cases}F_1(7)=1+3y_2+3y_2^2+y_2^3+z_1y_1y_2+2z_1y_1y_2^2+z_1y_1y_2^3\\
  \quad\quad\quad\quad+z_2y_1^2y_2^2+z_2y_1^2y_2^3+y_1^3y_2^3\\ F_2(7)=1+y_2\end{cases}\\
  &C(8)=\left(\!\!\begin{array}{cc}1&0\\1&\!\!-1\end{array}\!\!\!\right), && G(8)=\left(\!\!\begin{array}{cc}1&1\\0&-1\end{array}\!\!\right), && \begin{cases}F_1(8)=1\\ F_2(8)=1+y_2\end{cases}\\
  &C(9)=\left(\!\!\begin{array}{cc}1&0\\0&1\end{array}\!\!\right), && G(9)=\left(\!\!\begin{array}{cc}1&0\\0&1\end{array}\!\!\right), && \begin{cases}F_1(9)=1\\ F_2(9)=1\end{cases}
 \end{align*}
 \caption{$C$-matrices, $G$-matrices, and $F$-polynomials for the mutation sequence \eqref{eq:mut seq}.}
 \end{table}
\end{example}

\section{Companion Cluster Algebras}\label{sec:companions}
Fix an initial generalized seed $(\bfx,\bfy,B,\bfZ)$ over a semifield $\PP$.  Write $D=(d_i\delta_{ij})$ where $d_i$ is the degree of the exchange polynomial $Z_i$.  

Denote by $\Lbfx:=\bfx^{1/\bfd}$ the collection $({}^L\!x_1,\cdots,{}^L\!x_n):=(x_1^{1/d_1},\ldots,x_n^{1/d_n})$ in the extension field $\QQ\PP(\bfx^{1/\bfd})$ of $\QQ\PP(\bfx)$.  For clarity we also write $\Lbfy=\bfy$, i.e. ${}^L\!y_j=y_j$.  Define the \emph{left-companion cluster algebra} $\LcA$ of $\cA$ to be $\cA(\Lbfx,\Lbfy,DB)\subset\QQ\PP(\bfx^{1/\bfd})$.  Write $(\Lbfx^t,\Lbfy^t,\LB^t)$ for the seed associated to vertex $t\in\TT_n$ in the construction of $\LcA$ and denote by ${}^L\!\bfc^t_j$, ${}^L\!\bfg^t_j$, and ${}^L\!F^t_j$ the $c$-vectors, $g$-vectors, and $F$-polynomials of $\LcA$.

Write $\Rbfx=\bfx$, i.e. ${}^R\!x_i=x_i$, and denote by $\Rbfy:=\bfy^\bfd$ the collection $({}^R\!y_1,\ldots,{}^R\!y_n)=(y_1^{d_1},\ldots,y_n^{d_n})$.  Define the \emph{right-companion cluster algebra} $\RcA$ of $\cA$ to be $\cA(\Rbfx,\Rbfy,BD)\subset\QQ\PP(\bfx)$.  Write $(\Rbfx^t,\Rbfy^t,\RB^t)$ for the seed associated to vertex $t\in\TT_n$ in the construction of $\RcA$ and denote by ${}^R\!\bfc^t_j$, ${}^R\!\bfg^t_j$, and ${}^R\!F^t_j$ the $c$-vectors, $g$-vectors, and $F$-polynomials of $\RcA$.  

We immediately obtain the following result as a consequence of Proposition~\ref{prop:CA c-vectors} and Proposition~\ref{prop:GCA c-vectors} (cf. \cite[Props. 3.9 and 3.10]{nakanishi}).
\begin{corollary}\label{cor:c-vectors}
 The $c$-vectors of the generalized cluster algebra $\cA(\bfx,\bfy,B,\bfZ)$ coincide with the $c$-vectors of its left-companion cluster algebra $\cA(\bfx^{1/\bfd},\bfy,DB)$ while the $c$-vectors of its right-companion cluster algebra $\cA(\bfx,\bfy^\bfd,BD)$ can be obtained from those of $\cA(\bfx,\bfy,B,\bfZ)$ by the transformation ${}^R\!\bfc^t_j=d_i^{-1} c^t_{ij} d_j$.
\end{corollary}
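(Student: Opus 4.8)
The plan is to reduce everything to a comparison of the two $c$-vector recursions in Propositions~\ref{prop:CA c-vectors} and~\ref{prop:GCA c-vectors}, once we have identified the exchange matrices of the companion cluster algebras. First I would show by induction on the distance from $t_0$ in $\TT_n$ that $\LB^t=DB^t$ and $\RB^t=B^tD$ for every $t$. At $t_0$ these hold by definition, since $\LB^{t_0}=DB=DB^{t_0}$ and $\RB^{t_0}=BD=B^{t_0}D$. For the inductive step with $t\stackrel{k}{\text{---}}t'$, the computation already carried out in the proof of involutivity of generalized seed mutation shows $D(B^t)'=(DB^t)'$ and $(B^t)'D=(B^tD)'$, where $(\,\cdot\,)'$ denotes classical matrix mutation~\eqref{eq:CA matrix mutation} in direction $k$; since $\LcA$ and $\RcA$ are ordinary cluster algebras we also have $\LB^{t'}=(\LB^t)'$ and $\RB^{t'}=(\RB^t)'$, and combining these gives the claim. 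In particular $\LB^t_{kj}=d_kb^t_{kj}$ and $\RB^t_{kj}=b^t_{kj}d_j$.

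For the left companion, plug $\LB^t_{kj}=d_kb^t_{kj}$ into the ordinary $c$-vector recursion of Proposition~\ref{prop:CA c-vectors}: for $t\stackrel{k}{\text{---}}t'$ it reads ${}^Lc^{t'}_{ij}=-{}^Lc^t_{ik}$ when $j=k$ and ${}^Lc^{t'}_{ij}={}^Lc^t_{ij}+{}^Lc^t_{ik}[d_kb^t_{kj}]_++[-{}^Lc^t_{ik}]_+d_kb^t_{kj}$ when $j\ne k$, which is \emph{verbatim} the recursion of Proposition~\ref{prop:GCA c-vectors} for the generalized cluster algebra. Since at $t_0$ all $c$-vectors of $\cA$ and of $\LcA$ equal the standard basis vectors $\bfe_j$ (the coefficients $y_j$ tropicalize to themselves in either setting), a straightforward induction on $\TT_n$ yields ${}^L\bfc^t_j=\bfc^t_j$ for all $t$ and $j$.

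For the right companion I would instead prove the sharper entrywise statement ${}^Rc^t_{ij}=d_i^{-1}c^t_{ij}d_j$ by induction on $\TT_n$; at $t_0$ both sides equal $\delta_{ij}$. For $t\stackrel{k}{\text{---}}t'$, the case $j=k$ follows at once from ${}^Rc^{t'}_{ik}=-{}^Rc^t_{ik}$. For $j\ne k$, substituting the inductive hypothesis and $\RB^t_{kj}=b^t_{kj}d_j$ into Proposition~\ref{prop:CA c-vectors} applied to $\RcA$, and repeatedly using that the $d_m$ are positive integers so that $[b^t_{kj}d_j]_+=d_j[b^t_{kj}]_+$ and $[-d_i^{-1}c^t_{ik}d_k]_+=d_i^{-1}d_k[-c^t_{ik}]_+$, one factors out $d_i^{-1}d_j$ and recognizes the remaining bracket as $c^{t}_{ij}+c^t_{ik}[d_kb^t_{kj}]_++[-c^t_{ik}]_+d_kb^t_{kj}=c^{t'}_{ij}$ via Proposition~\ref{prop:GCA c-vectors}. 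The only mildly delicate point in the whole argument is this bookkeeping with $[\,\cdot\,]_+$ and the scalars $d_m$; there is no conceptual obstacle, and the result is a routine double induction once the companion exchange matrices have been pinned down.
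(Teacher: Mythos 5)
Your proposal is correct and follows the same route the paper intends: the paper presents this corollary as an immediate consequence of Propositions~\ref{prop:CA c-vectors} and~\ref{prop:GCA c-vectors}, and your argument simply fills in the details (identifying $\LB^t=DB^t$, $\RB^t=B^tD$ via the computation from the involutivity proof, then matching the two $c$-vector recursions by induction, using positivity of the $d_m$ to move them through $[\,\cdot\,]_+$). All the bookkeeping checks out.
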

Similarly the following result is an immediate consequence of Proposition~\ref{prop:CA g-vectors} and Proposition~\ref{prop:GCA g-vectors} (cf. \cite[Props. 3.16 and 3.17]{nakanishi}).
\begin{corollary}\label{cor:g-vectors}
 The $g$-vectors of the generalized cluster algebra $\cA(\bfx,\bfy,B,\bfZ)$ coincide with the $g$-vectors of its right-companion cluster algebra $\cA(\bfx,\bfy^\bfd,BD)$ while the $g$-vectors of its left-companion cluster algebra $\cA(\bfx^{1/\bfd},\bfy,DB)$ can be obtained from those of $\cA(\bfx,\bfy,B,\bfZ)$ by the transformation ${}^L\!\bfg^t_j=d_i g^t_{ij} d_j^{-1}$.
\end{corollary}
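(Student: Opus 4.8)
The plan is to run parallel inductions on the distance from the initial vertex $t_0$ in $\TT_n$, matching the classical $g$-vector recursion of Proposition~\ref{prop:CA g-vectors}, applied to each companion, against the recursion of Proposition~\ref{prop:GCA g-vectors} for $\cA$. First I would collect the three ingredients that do the work: (i) the matrix identities $\LB^t=DB^t$ and $\RB^t=B^tD$, which hold along every mutation sequence by the same reasoning used in the involutivity proof (applying classical matrix mutation \eqref{eq:CA matrix mutation} to $DB$, resp.\ $BD$, agrees with \eqref{eq:GCA matrix mutation}); (ii) the $c$-vector relations ${}^L\!c^t_{ij}=c^t_{ij}$ and ${}^R\!c^t_{ij}=d_i^{-1}c^t_{ij}d_j$ from Corollary~\ref{cor:c-vectors}; and (iii) the identity $[\lambda x]_+=\lambda[x]_+$ for any positive rational $\lambda$, which lets the scalars $d_i,d_j,d_k$ pass in and out of the $[-\,\cdot\,]_+$ expressions. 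Since the initial $g$-matrices of $\LcA$, $\RcA$, and $\cA$ are all the identity and the $j\ne k$ branch of every recursion is inert, everything reduces to the $j=k$ branch.

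For the right companion I would prove ${}^R\!g^t_{ij}=g^t_{ij}$. Assuming this at $t$ and substituting ${}^R\!b^t_{\ell k}=b^t_{\ell k}d_k$, ${}^R\!b^t_{i\ell}=b^t_{i\ell}d_\ell$, and ${}^R\!c^t_{\ell k}=d_\ell^{-1}c^t_{\ell k}d_k$ into the classical recursion for the $j=k$ branch, the factor $d_\ell$ from ${}^R\!b^t_{i\ell}$ cancels the $d_\ell^{-1}$ hidden inside $[-{}^R\!c^t_{\ell k}]_+$, turning the last sum into $\sum_\ell b^t_{i\ell}[-c^t_{\ell k}d_k]_+$, while the middle sum becomes $\sum_\ell g^t_{i\ell}[-b^t_{\ell k}d_k]_+$ directly. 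What remains is precisely the $j=k$ branch of Proposition~\ref{prop:GCA g-vectors}, so ${}^R\!g^{t'}_{ik}=g^{t'}_{ik}$.

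For the left companion I would prove ${}^L\!g^t_{ij}=d_ig^t_{ij}d_j^{-1}$ (equivalently, $D$-conjugation of the $g$-matrix), the base case holding since $d_i\delta_{ij}d_j^{-1}=\delta_{ij}$. Assuming the relation at $t$ and substituting ${}^L\!g^t_{i\ell}=d_ig^t_{i\ell}d_\ell^{-1}$, ${}^L\!b^t_{\ell k}=d_\ell b^t_{\ell k}$, ${}^L\!b^t_{i\ell}=d_ib^t_{i\ell}$, and ${}^L\!c^t_{\ell k}=c^t_{\ell k}$ into the classical recursion, the $d_\ell$'s collapse and one is left with $-d_ig^t_{ik}d_k^{-1}+d_i\sum_\ell g^t_{i\ell}[-b^t_{\ell k}]_+-d_i\sum_\ell b^t_{i\ell}[-c^t_{\ell k}]_+$; on the other hand, multiplying the $j=k$ branch of Proposition~\ref{prop:GCA g-vectors} through by $d_i(\,\cdot\,)d_k^{-1}$ and pulling the $d_k^{-1}$ inside each $[-(\,\cdot\,)d_k]_+$ produces the same expression, so ${}^L\!g^{t'}_{ik}=d_ig^{t'}_{ik}d_k^{-1}$.

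I expect no real obstacle: the statement is presented as immediate, and the entire argument is bookkeeping of the diagonal rescalings. The only point that warrants a second look is the $j=k$ branch, where one must check that the $d_\ell$ produced by the mutated exchange matrix is exactly cancelled by the $d_\ell^{-1}$ carried inside the positive part of the companion $c$-vector; once $[\lambda x]_+=\lambda[x]_+$ is invoked this cancellation is automatic and both identities fall out.
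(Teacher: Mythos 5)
Your proposal is correct and matches the paper's intent exactly: the paper declares the corollary an ``immediate consequence'' of Propositions~\ref{prop:CA g-vectors} and~\ref{prop:GCA g-vectors}, and your parallel induction with the substitutions $\LB^t=DB^t$, $\RB^t=B^tD$, the $c$-vector relations from Corollary~\ref{cor:c-vectors}, and the identity $[\lambda x]_+=\lambda[x]_+$ for $\lambda>0$ is precisely the omitted bookkeeping. No gaps.
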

We see from Corollary~\ref{cor:c-vectors} and Corollary~\ref{cor:g-vectors} that the $c$- and $g$-vectors of the generalized cluster algebra $\cA(\bfx,\bfy,B,\bfZ)$ are intimately related to those of its left- and right-companion cluster algebras.  The same is true for $F$-polynomials, however the precise relationship for left- and right-companions are very different.

We begin with the left-companion.  For $1\le i\le n$ and $0\le s\le d_i$ we will write $\bfz^\bin=(z^\bin_{i,s})$ where $z^{\bin}_{i,s}={d_i\choose s}\in\ZZ$.
\begin{proposition}\label{prop:left F-polynomials}
 Let $(\bfx,\bfy,B,\bfZ)$ be a generalized seed over $\PP$.  For any $t\in\TT_n$ and any $1\le j\le n$ we have the following equalities in $\QQ_\sf(\bfy)$ and $\QQ_\sf(\bfx,\bfy)$ respectively:
 \begin{equation}
  F^t_j(\bfy,\bfz^{\bin})={}^L\!F^t_j\big({}^L\!\bfy\big)^{d_j}\quad\text{ and }\quad F^t_j(\hat{\bfy},\bfz^{\bin})={}^L\!F^t_j\big({}^L\!\hat\bfy\big)^{d_j},
 \end{equation}
 where ${}^L\!y_i=y_i$ and ${}^L\!\hat{y}_i=\hat{y}_i$.
\end{proposition}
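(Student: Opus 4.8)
The plan is to prove the first identity $F^t_j(\bfy,\bfz^{\bin})={}^L\!F^t_j({}^L\!\bfy)^{d_j}$ by induction on the distance in $\TT_n$ from the initial vertex $t_0$, using the $F$-polynomial recursions in Proposition~\ref{prop:GCA F-polynomials} for the left-hand side and Proposition~\ref{prop:CA F-polynomials} (applied to the companion seed $(\Lbfx,\Lbfy,DB)$) for the right-hand side; the second identity then follows formally by substituting $\hat\bfy$ for $\bfy$, since both sides are subtraction-free rational expressions and $\hat\bfy$ mutates exactly as $\bfy$ does. The base case is immediate: $F^{t_0}_j=1={}^L\!F^{t_0}_j$. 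For the inductive step at an edge $t\stackrel{k}{\text{---}}t'$, only the $k$-th polynomial changes, so it suffices to show the identity for $j=k$ assuming it holds at $t$ for all indices.

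The key computation is to match the two recursions. For the generalized side, specializing $\bfz=\bfz^{\bin}$ turns the exchange polynomial into $Z_k(u)\big|_{\bfz=\bfz^{\bin}}=\sum_{s=0}^{d_k}\binom{d_k}{s}u^s=(1+u)^{d_k}$ by the binomial theorem. Thus
\[
 F^{t'}_k\big(\bfy,\bfz^{\bin}\big)=\big(F^t_k\big)^{-1}\bigg(\prod_{i=1}^n y_i^{[-c^t_{ik}]_+}\big(F^t_i\big)^{[-b^t_{ik}]_+}\bigg)^{d_k}\bigg(1+\prod_{i=1}^n y_i^{c^t_{ik}}\big(F^t_i\big)^{b^t_{ik}}\bigg)^{d_k},
\]
where all $F^t_i$ here already carry the specialization $\bfz=\bfz^{\bin}$. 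Now substitute the inductive hypothesis $F^t_i(\bfy,\bfz^{\bin})={}^L\!F^t_i({}^L\!\bfy)^{d_i}$. Using $b^t_{ik}$ together with $d_i$ combine into the companion exchange matrix entry: since ${}^L\!B^t=DB^t$ (this is the matrix-mutation compatibility $DB'=(DB)'$ noted in the proof that $\mu_k$ is involutive), we have ${}^L\!b^t_{ik}=d_i b^t_{ik}$, and likewise $[-{}^L\!b^t_{ik}]_+=d_i[-b^t_{ik}]_+$. Because the $c$-vectors of $\cA$ and of $\LcA$ coincide (Corollary~\ref{cor:c-vectors}), we have ${}^L\!c^t_{ik}=c^t_{ik}$. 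Feeding these in, the exponent of each ${}^L\!F^t_i$ becomes $d_i[-b^t_{ik}]_+\cdot d_k\cdot\frac{1}{?}$ — more carefully, $\big(({}^L\!F^t_i)^{d_i}\big)^{[-b^t_{ik}]_+}=({}^L\!F^t_i)^{[-{}^L\!b^t_{ik}]_+}$, and the overall $d_k$-th power together with $(F^t_k)^{-1}=({}^L\!F^t_k)^{-d_k}$ reorganizes the right-hand side into exactly $\big({}^L\!F^{t'}_k({}^L\!\bfy)\big)^{d_k}$ via the classical recursion for $\LcA$. This is the heart of the argument, and the main obstacle is bookkeeping: one must verify that the factor of $d_k$ in front of the generalized recursion is precisely what converts the classical companion recursion (which has a power $1$, not $d_k$, on its binomial factor, but whose matrix is $DB^t$) into the $d_k$-th power — i.e. checking that $d_k[\,\cdot\,]_+$ distributes correctly between the "$DB$" part (already absorbed into ${}^L\!b$) and the leftover scalar $d_k$ that produces the outer $d_k$-th power.

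Once the $\bfy$-version is established, the $\hat\bfy$-version is obtained by the universal-property / formal-indeterminate argument used throughout Section~\ref{sec:CA}: both $F^t_j(\bfy,\bfz^{\bin})$ and ${}^L\!F^t_j({}^L\!\bfy)^{d_j}$ are elements of $\QQ_\sf(\bfy)$ built by the same subtraction-free recursions, and the identity, being an identity of subtraction-free rational functions in the formal variables $\bfy$, persists under the substitution $y_i\mapsto\hat y_i$ (equivalently $y_i\mapsto y_i\prod_j x_j^{b_{ji}}$) inside $\QQ_\sf(\bfx,\bfy)$; the remark that ${}^L\!\hat y_i=\hat y_i$ follows because $\hat y$ mutates by \eqref{eq:GCA hat coefficient exchange}, which is the $y$-mutation formula for the companion data $DB$. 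I expect the whole proof to be short modulo the exponent-chasing in the inductive step.
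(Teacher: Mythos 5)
Your proposal is correct and follows essentially the same route as the paper: induction on the distance from $t_0$, the base case $F^{t_0}_j=1={}^L\!F^{t_0}_j$, the observation that $Z_k|_{\bfz=\bfz^\bin}(u)=(1+u)^{d_k}$, matching the two recursions via ${}^L\!b^t_{ik}=d_ib^t_{ik}$ and the coincidence of $c$-vectors (Corollary~\ref{cor:c-vectors}), and finally the substitution $y_i\mapsto\hat y_i$ justified by $\hat y_i={}^L\!\hat y_i$. The exponent bookkeeping you flag as the "main obstacle" works out exactly as you indicate, since $d_i[-b^t_{ik}]_+=[-d_ib^t_{ik}]_+$ for $d_i>0$.
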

\begin{proof}
 We will proceed by induction on the distance from $t_0$ to $t$ in $\TT_n$.  To begin, note that by definition we have $\big({}^L\!x^{t_0}_j\big)^{d_j}=\big(x_j^{1/d_j}\big)^{d_j}=x_j=x^{t_0}_j$ so that $F^{t_0}_j=1={}^L\!F^{t_0}_j$, in particular $F^{t_0}_j=\big({}^L\!F^{t_0}_j\big)^{d_j}$.  Consider $t\stackrel{k}{\text{---}}t'$ with $t'$ further from $t_0$ than $t$ and suppose $F^t_j=\big({}^L\!F^t_j\big)^{d_j}$ for all $j$.  Then by Proposition~\ref{prop:CA F-polynomials} and Proposition~\ref{prop:GCA F-polynomials} we see for $j\ne k$ that $F^{t'}_j=F^t_j=\big({}^L\!F^t_j\big)^{d_j}=\big({}^L\!F^{t'}_j\big)^{d_j}$ while taking $j=k$ we have
 \begin{align*}
  {}^L\!F^{t'}_k\big({}^L\!\bfy\big)^{d_k}
  &=\Bigg(\big({}^L\!F_k^t\big)^{-1}\bigg(\stackrel[i=1]{n}{\prod} {}^L\!y_i^{[-{}^L\!c_{ik}^t]_+}\big({}^L\!F_i^t\big)^{[-d_i b_{ik}^t]_+}\bigg)\bigg(1+\stackrel[i=1]{n}{\prod} {}^L\!y_i^{{}^L\!c_{ik}^t}\big({}^L\!F_i^t\big)^{d_i b_{ik}^t}\bigg)\Bigg)^{d_k}\\
  &=\Big(\big({}^L\!F_k^t\big)^{d_k}\Big)^{-1}\bigg(\stackrel[i=1]{n}{\prod} {}^L\!y_i^{[-{}^L\!c_{ik}^t]_+}\Big(\big({}^L\!F_i^t\big)^{d_i}\Big)^{[- b_{ik}^t]_+}\bigg)^{d_k}\sum\limits_{s=0}^{d_k}{d_k\choose s}\bigg(\stackrel[i=1]{n}{\prod} {}^L\!y_i^{{}^L\!c_{ik}^t}\Big(\big({}^L\!F_i^t\big)^{d_i}\Big)^{ b_{ik}^t}\bigg)^s\\
  &=\big( F_k^t\big)^{-1}\bigg(\stackrel[i=1]{n}{\prod} y_i^{[-c_{ik}^t]_+}\big(F_i^t\big)^{[-b_{ik}^t]_+}\bigg)^{d_k}\sum\limits_{s=0}^{d_k}{d_k\choose s}\bigg(\stackrel[i=1]{n}{\prod} y_i^{c_{ik}^t}\big(F_i^t\big)^{b_{ik}^t}\bigg)^s\\
  &= F^{t'}_k(\bfy,\bfz^{\bin}),
 \end{align*}
 where we used Corollary~\ref{cor:c-vectors} in the third equality above.  It follows by induction that $F^t_j(\bfy,\bfz^{\bin})={}^L\!F^t_j\big({}^L\!\bfy\big)^{d_j}$ for all $t\in\TT_n$ and $1\le j\le n$.  Note that $\hat{y}_j=y_j\stackrel[i=1]{n}{\prod} x_i^{b_{ij}}={}^L\!y_j\stackrel[i=1]{n}{\prod} {}^L\!x_i^{d_i b_{ij}}={}^L\!\hat y_j$ so that substituting the variables $\hat{y}_j$ into this identity gives $F^t_j(\hat{\bfy},\bfz^{\bin})={}^L\!F^t_j\big({}^L\!\hat\bfy\big)^{d_j}$ for all $t\in\TT_n$ and $1\le j\le n$.
\end{proof}
Write $x_i^t\big|_{\bfz=\bfz^\bin}\in\cF$ and $y_j^t\big|_{\bfz=\bfz^\bin}\in\PP$ for the variables obtained by applying equations \eqref{th:GCA x-variables} and \eqref{th:GCA y-variables} respectively using the specialized $F$-polynomials $F^t_j(\bfy,\bfz^{\bin})$ in place of the generic $F$-polynomials $F^t_j(\bfy,\bfz)$.
\begin{theorem}\label{th:left companions}
 We have $x_i^t\big|_{\bfz=\bfz^\bin}=\big({}^L\!x^t_i\big)^{d_i}$ and $y_j^t\big|_{\bfz=\bfz^\bin}={}^L\!y^t_i$.
\end{theorem}
\begin{proof}
 For coefficients we apply Theorem~\ref{th:CA y-variables} and Theorem~\ref{th:GCA y-variables} along with Corollary~\ref{cor:c-vectors} to get
 \begin{align*}
  {}^L\!y^t_j
  &=\bigg(\stackrel[i=1]{n}{\prod} {}^L\!y_i^{{}^L\!c^t_{ij}}\bigg)\stackrel[i=1]{n}{\prod}{}^L\!F^t_i\big|_\PP\big(\Lbfy\big)^{d_i b^t_{ij}}=\bigg(\stackrel[i=1]{n}{\prod}  y_i^{c^t_{ij}}\bigg)\stackrel[i=1]{n}{\prod} F^t_i\big|_\PP(\bfy,\bfz^{\bin})^{ b^t_{ij}}= y^t_j.
 \end{align*}

 To finish, we may apply Theorem~\ref{th:CA x-variables} and Theorem~\ref{th:GCA x-variables} along with Corollary~\ref{cor:g-vectors} to get
 \begin{align*}
  \big({}^L\!x^t_j\big)^{d_j}
  &=\Bigg(\bigg(\stackrel[i=1]{n}{\prod}{}^L\!x_i^{{}^L\!g_{ij}^t}\bigg)\frac{{}^L\!F_j^t\big|_\cF\big({}^L\!\hat\bfy\big)}{{}^L\!F_j^t\big|_\PP\big({}^L\!\bfy\big)}\Bigg)^{d_j}=\bigg(\stackrel[i=1]{n}{\prod}{}^L\!x_i^{{}^L\!g_{ij}^td_j}\bigg)\frac{{}^L\!F_j^t\big|_\cF\big({}^L\!\hat\bfy\big)^{d_j}}{{}^L\!F_j^t\big|_\PP\big({}^L\!\bfy\big)^{d_j}}\\
  &=\bigg(\stackrel[i=1]{n}{\prod}\Big({}^L\!x_i^{1/d_i}\Big)^{ g_{ij}^t}\bigg)\frac{ F_j^t\big|_\cF(\hat{\bfy},\bfz^{\bin})}{ F_j^t\big|_\PP(\bfy,\bfz^{\bin})}=\bigg(\stackrel[i=1]{n}{\prod}x_i^{ g_{ij}^t}\bigg)\frac{ F_j^t\big|_\cF(\hat{\bfy},\bfz^{\bin})}{ F_j^t\big|_\PP(\bfy,\bfz^{\bin})}= x^t_j.
 \end{align*}
\end{proof}
\begin{example}
 As an illustration of Corollaries~\ref{cor:c-vectors} and~\ref{cor:g-vectors} as well as Theorem~\ref{th:left companions} we now present the $C$-matrices, $G$-matrices, and $F$-polynomials for the left companion cluster algebra $\LcA$ in Table 3 from which we invite the reader to directly verify these results.
 \begin{table}
 \begin{align*}
  &{}^L\!C(1)=\left(\!\!\begin{array}{cc}1&0\\0&1\end{array}\!\!\right), && {}^L\!G(1)=\left(\!\!\begin{array}{cc}1&0\\0&1\end{array}\!\!\right), && \begin{cases}{}^L\!F_1(1)=1\\ {}^L\!F_2(1)=1\end{cases}\\
  &{}^L\!C(2)=\left(\!\begin{array}{cc}\!\!-1&0\\0&1\end{array}\!\!\right), && {}^L\!G(2)=\left(\!\begin{array}{cc}\!\!-1&0\\0&1\end{array}\!\!\right), && \begin{cases}{}^L\!F_1(2)=1+{}^L\!y_1\\ {}^L\!F_2(2)=1\end{cases}\\
  &{}^L\!C(3)=\left(\!\begin{array}{cc}\!\!-1&0\\0&\!\!-1\end{array}\!\!\!\right), && {}^L\!G(3)=\left(\!\begin{array}{cc}\!\!-1&0\\0&\!\!-1\end{array}\!\!\!\right), && \begin{cases}{}^L\!F_1(3)=1+{}^L\!y_1\\ {}^L\!F_2(3)=1+{}^L\!y_2+3{}^L\!y_1{}^L\!y_2+3{}^L\!y_1^2{}^L\!y_2+{}^L\!y_1^3{}^L\!y_2\end{cases}\\
  &{}^L\!C(4)=\left(\!\!\begin{array}{cc}1&-3\\0&-1\end{array}\!\!\right), && {}^L\!G(4)=\left(\!\begin{array}{cc}1&0\\ \!\!-1&\!\!-1\end{array}\!\!\!\right), && \begin{cases}{}^L\!F_1(4)=1+{}^L\!y_2+2{}^L\!y_1{}^L\!y_2+{}^L\!y_1^2{}^L\!y_2\\ 
  {}^L\!F_2(4)=1+{}^L\!y_2+3{}^L\!y_1{}^L\!y_2+3{}^L\!y_1^2{}^L\!y_2+{}^L\!y_1^3{}^L\!y_2\end{cases}\\
  &{}^L\!C(5)=\left(\!\!\!\begin{array}{cc}-2&3\\-1&1\end{array}\!\!\right), && {}^L\!G(5)=\left(\!\begin{array}{cc}1&3\\ \!\!-1&\!\!-2\end{array}\!\!\!\right), && \begin{cases}{}^L\!F_1(5)=1+{}^L\!y_2+2{}^L\!y_1{}^L\!y_2+{}^L\!y_1^2{}^L\!y_2\\ 
  {}^L\!F_2(5)=1+2{}^L\!y_2+{}^L\!y_2^2+3{}^L\!y_1{}^L\!y_2\\
  \quad\quad\quad\quad+3{}^L\!y_1{}^L\!y_2^2+3{}^L\!y_1^2{}^L\!y_2^2+{}^L\!y_1^3{}^L\!y_2^2\end{cases}\\
  &{}^L\!C(6)=\left(\!\!\begin{array}{cc}2&-3\\1&-2\end{array}\!\!\right), && {}^L\!G(6)=\left(\!\!\begin{array}{cc}2&3\\ \!\!-1&\!\!-2\end{array}\!\!\!\right), && \begin{cases}{}^L\!F_1(6)=1+{}^L\!y_2+{}^L\!y_1{}^L\!y_2\\ {}^L\!F_2(6)=1+2{}^L\!y_2+{}^L\!y_2^2+3{}^L\!y_1{}^L\!y_2\\
  \quad\quad\quad\quad+3{}^L\!y_1{}^L\!y_2^2+3{}^L\!y_1^2{}^L\!y_2^2+{}^L\!y_1^3{}^L\!y_2^2\end{cases}\\
  &{}^L\!C(7)=\left(\!\!\!\begin{array}{cc}-1&3\\-1&2\end{array}\!\!\right), && {}^L\!G(7)=\left(\!\!\begin{array}{cc}2&3\\ \!\!-1&\!\!-1\end{array}\!\!\!\right), && \begin{cases}{}^L\!F_1(7)=1+{}^L\!y_2+{}^L\!y_1{}^L\!y_2\\ {}^L\!F_2(7)=1+{}^L\!y_2\end{cases}\\
  &{}^L\!C(8)=\left(\!\!\begin{array}{cc}1&0\\1&\!\!-1\end{array}\!\!\!\right), && {}^L\!G(8)=\left(\!\!\begin{array}{cc}1&3\\0&-1\end{array}\!\!\right), && \begin{cases}{}^L\!F_1(8)=1\\ {}^L\!F_2(8)=1+{}^L\!y_2\end{cases}\\
  &{}^L\!C(9)=\left(\!\!\begin{array}{cc}1&0\\0&1\end{array}\!\!\right), && {}^L\!G(9)=\left(\!\!\begin{array}{cc}1&0\\0&1\end{array}\!\!\right), && \begin{cases}{}^L\!F_1(9)=1\\ {}^L\!F_2(9)=1\end{cases}
 \end{align*}
 \caption{$C$-matrices, $G$-matrices, and $F$-polynomials for the same mutation sequence \eqref{eq:mut seq} applied to the seeds of $\LcA$.}
 \end{table}
\end{example}

To state a relationship between a generalized cluster algebra and its right-companion we need the following analogue of Proposition~\ref{prop:left F-polynomials}.
\begin{proposition}\label{prop:right F-polynomials}
 Let $(\bfx,\bfy,B,\bfZ)$ be a generalized seed over $\PP$.  For any $t\in\TT_n$ and any $1\le j\le n$ we have the following equalities in $\QQ_\sf(\bfy)$ and $\QQ_\sf(\bfx,\bfy)$ respectively:
 \begin{equation}
  F^t_j(\bfy,\boldsymbol{0})={}^R\!F^t_j\big({}^R\!\bfy\big)\quad\text{ and }\quad F^t_j(\hat{\bfy},\boldsymbol{0})={}^R\!F^t_j\big({}^R\!\hat\bfy\big),
 \end{equation}
 where ${}^R\!y_i=y_i^{d_i}$ and ${}^R\hat{y}_i=\hat{y}_i^{d_i}$.
\end{proposition}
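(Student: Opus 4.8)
The approach mirrors the proof of Proposition~\ref{prop:left F-polynomials}: I would induct on the distance in $\TT_n$ from the initial vertex $t_0$ to $t$, proving the first identity $F^t_j(\bfy,\boldsymbol{0})={}^R\!F^t_j\big(\Rbfy\big)$ in $\QQ_\sf(\bfy)$ and then obtaining the second by substituting $\hat{\bfy}$ for $\bfy$. The base case is immediate: at $t_0$ both sides equal $1$, since $F^{t_0}_j = 1 = {}^R\!F^{t_0}_j$. For the inductive step, fix an edge $t\stackrel{k}{\text{---}}t'$ with $t'$ farther from $t_0$, and assume $F^t_j(\bfy,\boldsymbol{0})={}^R\!F^t_j\big(\Rbfy\big)$ for all $j$. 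The case $j\ne k$ is trivial from the recursions in Proposition~\ref{prop:CA F-polynomials} and Proposition~\ref{prop:GCA F-polynomials}, both of which leave $F_j$ unchanged. So the heart of the argument is the case $j=k$.

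For $j=k$, I would start from the generalized $F$-polynomial recursion (Proposition~\ref{prop:GCA F-polynomials}) and specialize $\bfz=\boldsymbol{0}$. The key observation is that for each exchange polynomial we have $Z_k(u)=1+z_{k,1}u+\cdots+z_{k,d_k-1}u^{d_k-1}+u^{d_k}$, and specializing all the $z_{k,s}$ to $0$ for $1\le s\le d_k-1$ (the constant and leading coefficients are forced to be $1$, not $0$) leaves $Z_k\big|_{\bfz=\boldsymbol{0}}(u)=1+u^{d_k}$. Hence the specialized recursion reads
\[
F^{t'}_k(\bfy,\boldsymbol{0})=\big(F^t_k(\bfy,\boldsymbol{0})\big)^{-1}\bigg(\stackrel[i=1]{n}{\prod}y_i^{[-c^t_{ik}]_+}\big(F^t_i(\bfy,\boldsymbol{0})\big)^{[-b^t_{ik}]_+}\bigg)^{d_k}\bigg(1+\stackrel[i=1]{n}{\prod}y_i^{d_k c^t_{ik}}\big(F^t_i(\bfy,\boldsymbol{0})\big)^{d_k b^t_{ik}}\bigg).
\]
On the other side, the classical $F$-polynomial recursion for $\RcA$, whose exchange matrix is $BD$ (so $\RB^t = B^t D$, i.e. its $(i,k)$ entry is $b^t_{ik}d_k$) and whose coefficients are $\Rbfy=\bfy^\bfd$, gives
\[
{}^R\!F^{t'}_k(\Rbfy)=\big({}^R\!F^t_k(\Rbfy)\big)^{-1}\bigg(\stackrel[i=1]{n}{\prod}({}^R\!y_i)^{[-{}^R\!c^t_{ik}]_+}\big({}^R\!F^t_i(\Rbfy)\big)^{[-d_k b^t_{ik}]_+}\bigg)\bigg(1+\stackrel[i=1]{n}{\prod}({}^R\!y_i)^{{}^R\!c^t_{ik}}\big({}^R\!F^t_i(\Rbfy)\big)^{d_k b^t_{ik}}\bigg).
\]
Now I would plug in the inductive hypothesis ${}^R\!F^t_i(\Rbfy)=F^t_i(\bfy,\boldsymbol{0})$ and the relation between $c$-vectors from Corollary~\ref{cor:c-vectors}, namely ${}^R\!c^t_{ik}=d_i^{-1}c^t_{ik}d_k$, together with $({}^R\!y_i)^{{}^R\!c^t_{ik}}=(y_i^{d_i})^{d_i^{-1}c^t_{ik}d_k}=y_i^{d_k c^t_{ik}}$. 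Matching the $[-b^t_{ik}]_+$ versus $[-d_k b^t_{ik}]_+$ exponents uses that $d_k>0$ so $[-d_k b^t_{ik}]_+ = d_k[-b^t_{ik}]_+$, and likewise $[-{}^R\!c^t_{ik}]_+ = [-d_i^{-1}d_k c^t_{ik}]_+$; I would check the $y_i$-exponent coming from this term compares correctly with the $d_k[-c^t_{ik}]_+$ appearing after raising the bracketed product in the generalized recursion to the $d_k$ power — here the slight asymmetry (the generalized side has $y_i^{[-c^t_{ik}]_+}$ raised to $d_k$, the right side has $({}^R\!y_i)^{[-{}^R\!c^t_{ik}]_+}$) needs one line of bookkeeping with $[-{}^R\!c^t_{ik}]_+ d_i = [-c^t_{ik}d_k]_+ = d_k[-c^t_{ik}]_+$. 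After these substitutions the two right-hand sides agree, completing the induction.

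Finally, for the $\hat{\bfy}$ statement I would note, as in Proposition~\ref{prop:left F-polynomials}, that ${}^R\hat{y}_i = ({}^R\!y_i)\stackrel[\ell=1]{n}{\prod}x_\ell^{({}^R\!B)_{\ell i}} = y_i^{d_i}\stackrel[\ell=1]{n}{\prod}x_\ell^{b_{\ell i}d_i} = \hat{y}_i^{d_i}$, so substituting $\hat{\bfy}$ for $\bfy$ in the first identity yields $F^t_j(\hat{\bfy},\boldsymbol{0})={}^R\!F^t_j\big(\Rhbfy\big)$ directly. The main obstacle, such as it is, is purely the exponent bookkeeping in the $j=k$ step: one must carefully track how the degree matrix $D$ is distributed between the exchange matrix ($BD$ for $\RcA$) and the coefficients ($\bfy^\bfd$), and verify that the single factor of $1+u^{d_k}$ surviving the $\bfz=\boldsymbol{0}$ specialization of $Z_k$ reproduces exactly the binomial $1 + (\text{monomial})$ appearing in the classical recursion for $\RcA$ — no genuine difficulty, but the place where an error would most easily creep in.
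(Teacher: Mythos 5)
Your proposal is correct and follows essentially the same route as the paper's own proof: induction on the distance from $t_0$, with the $j=k$ step handled by observing that the $\bfz=\boldsymbol{0}$ specialization turns $Z_k$ into $1+u^{d_k}$ and then matching exponents against the classical recursion for $\RcA$ using ${}^R\!c^t_{ik}=d_i^{-1}c^t_{ik}d_k$ and the identities $[-c^t_{ik}d_k]_+=d_k[-c^t_{ik}]_+$, $[-b^t_{ik}d_k]_+=d_k[-b^t_{ik}]_+$. The concluding substitution argument via ${}^R\hat{y}_i=\hat{y}_i^{d_i}$ is likewise exactly the paper's.
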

\begin{proof}
 We will proceed by induction on the distance from $t_0$ to $t$ in $\TT_n$.  To begin, note that by definition we have $ F^{t_0}_j=1={}^R\!F^{t_0}_j$.  Consider $t\stackrel{k}{\text{---}}t'$ with $t'$ further from $t_0$ than $t$ and suppose $ F^t_j={}^R\!F^t_j$ for all $j$.  Then by Proposition~\ref{prop:CA F-polynomials} and Proposition~\ref{prop:GCA F-polynomials} we see for $j\ne k$ that $ F^{t'}_j= F^t_j={}^R\!F^t_j={}^R\!F^{t'}_j$ while taking $j=k$ we have
 \begin{align*}
  {}^R\!F^{t'}_k\big({}^R\bfy\big)
  &=\big({}^R\!F_k^t\big)^{-1}\bigg(\stackrel[i=1]{n}{\prod} {}^R\!y_i^{[-{}^R\!c_{ik}^t]_+}\big({}^R\!F_i^t\big)^{[- b_{ik}^td_k]_+}\bigg)\bigg(1+\stackrel[i=1]{n}{\prod} {}^R\!y_i^{{}^R\!c_{ik}^t}\big({}^R\!F_i^t\big)^{ b_{ik}^td_k}\bigg)\\
  &=\big( F_k^t\big)^{-1}\bigg(\stackrel[i=1]{n}{\prod}  y_i^{[- c_{ik}^td_k]_+}\big( F_i^t\big)^{[- b_{ik}^td_k]_+}\bigg)\bigg(1+\stackrel[i=1]{n}{\prod}  y_i^{ c_{ik}^td_k}\big( F_i^t\big)^{ b_{ik}^td_k}\bigg)\\
  &=\big( F_k^t\big)^{-1}\bigg(\stackrel[i=1]{n}{\prod}  y_i^{[- c_{ik}^t]_+}\big( F_i^t\big)^{[- b_{ik}^t]_+}\bigg)^{d_k}\Bigg(1+\bigg(\stackrel[i=1]{n}{\prod}  y_i^{ c_{ik}^t}\big( F_i^t\big)^{ b_{ik}^t}\bigg)^{d_k}\Bigg)\\
  &= F^{t'}_k(\bfy,\boldsymbol{0}).
 \end{align*}
 It follows by induction that $F^t_j(\bfy,\boldsymbol{0})={}^R\!F^t_j\big({}^R\!\bfy\big)$ for all $t\in\TT_n$ and $1\le j\le n$.  Finally notice that ${}^R\!\hat y_j={}^R\!y_j\stackrel[i=1]{n}{\prod} {}^R\!x_i^{ b_{ij}d_j}=y_j^{d_j}\stackrel[i=1]{n}{\prod}  x_i^{ b_{ij}d_j}=\hat{ y}_j^{d_j}$ so that substituting the variables $\hat{y}_j$ into this identity gives $F^t_j(\hat{\bfy},\boldsymbol{0})={}^R\!F^t_j\big({}^R\!\hat\bfy\big)$ for all $t\in\TT_n$ and $1\le j\le n$.
\end{proof}

Write $x_i^t\big|_{\bfz=\boldsymbol{0}}\in\cF$ and $y_j^t\big|_{\bfz=\boldsymbol{0}}\in\PP$ for the variables obtained by applying equations \eqref{th:GCA x-variables} and \eqref{th:GCA y-variables} respectively using the specialized $F$-polynomials $F^t_j(\bfy,\boldsymbol{0})$ in place of the generic $F$-polynomials $F^t_j(\bfy,\bfz)$.
\begin{theorem}\label{th:right companions}
 We have $x_i^t\big|_{\bfz=\boldsymbol{0}}={}^Rx^t_i$ and $\big(y_j^t\big|_{\bfz=\boldsymbol{0}}\big)^{d_j}={}^Ry^t_j$.
\end{theorem}
\begin{proof}
 To see the claim for coefficients we apply Theorem~\ref{th:CA y-variables} and Theorem~\ref{th:GCA y-variables} along with Corollary~\ref{cor:c-vectors} and Proposition~\ref{prop:right F-polynomials} to get
 \begin{align*}
  {}^R\!y^t_j
  &=\bigg(\stackrel[i=1]{n}{\prod} {}^R\!y_i^{{}^R\!c^t_{ij}}\bigg)\stackrel[i=1]{n}{\prod}{}^R\!F^t_i\big|_\PP(\Rbfy)^{b^t_{ij}d_j}=\bigg(\stackrel[i=1]{n}{\prod}  y_i^{ c^t_{ij}d_j}\bigg)\stackrel[i=1]{n}{\prod}F^t_i\big|_\PP(\bfy,\boldsymbol{0})^{ b^t_{ij}d_j}=\big( y^t_j\big)^{d_j}.
 \end{align*}
 Finally to see the claim for cluster variables we apply Theorem~\ref{th:CA x-variables} and Theorem~\ref{th:GCA x-variables} along with Corollary~\ref{cor:g-vectors} and Proposition~\ref{prop:right F-polynomials} to get
 \begin{align*}
  {}^R\!x^t_j
  &=\bigg(\stackrel[i=1]{n}{\prod}{}^R\!x_i^{{}^R\!g_{ij}^t}\bigg)\frac{{}^R\!F_j^t\big|_\cF({}^R\!\hat\bfy)}{{}^R\!F_j^t\big|_\PP({}^R\!\bfy)}=\bigg(\stackrel[i=1]{n}{\prod}x_i^{ g_{ij}^t}\bigg)\frac{ F_j^t\big|_\cF(\hat{\bfy},\boldsymbol{0})}{ F_j^t\big|_\PP(\bfy,\boldsymbol{0})}= x^t_j.
 \end{align*}
\end{proof}

\begin{example}
 As an illustration of Corollaries~\ref{cor:c-vectors} and~\ref{cor:g-vectors} as well as Theorem~\ref{th:right companions} we now present the $C$-matrices, $G$-matrices, and $F$-polynomials for the right companion cluster algebra $\RcA$ in Table 4 from which we invite the reader to directly verify these results.
 \begin{table}
 \begin{align*}
  &{}^R\!C(1)=\left(\!\!\begin{array}{cc}1&0\\0&1\end{array}\!\!\right), && {}^R\!G(1)=\left(\!\!\begin{array}{cc}1&0\\0&1\end{array}\!\!\right), && \begin{cases}{}^R\!F_1(1)=1\\ {}^R\!F_2(1)=1\end{cases}\\
  &{}^R\!C(2)=\left(\!\begin{array}{cc}\!\!-1&0\\0&1\end{array}\!\!\right), && {}^R\!G(2)=\left(\!\begin{array}{cc}\!\!-1&0\\0&1\end{array}\!\!\right), && \begin{cases}{}^R\!F_1(2)=1+{}^R\!y_1\\ {}^R\!F_2(2)=1\end{cases}\\
  &{}^R\!C(3)=\left(\!\begin{array}{cc}\!\!-1&0\\0&\!\!-1\end{array}\!\!\!\right), && {}^R\!G(3)=\left(\!\begin{array}{cc}\!\!-1&0\\0&\!\!-1\end{array}\!\!\!\right), && \begin{cases}{}^R\!F_1(3)=1+{}^R\!y_1\\ {}^R\!F_2(3)=1+{}^R\!y_2+{}^R\!y_1{}^R\!y_2\end{cases}\\
  &{}^R\!C(4)=\left(\!\!\begin{array}{cc}1&-1\\0&-1\end{array}\!\!\right), && {}^R\!G(4)=\left(\!\begin{array}{cc}1&0\\ \!\!-3&\!\!-1\end{array}\!\!\!\right), && \begin{cases}{}^R\!F_1(4)=1+3{}^R\!y_2+3{}^R\!y_2^2+{}^R\!y_2^3\\
  \quad\quad\quad\quad+3{}^R\!y_1{}^R\!y_2^2+2{}^R\!y_1{}^R\!y_2^3+{}^R\!y_1^2{}^R\!y_2^3\\ {}^R\!F_2(4)=1+{}^R\!y_2+{}^R\!y_1{}^R\!y_2\end{cases}\\
  &{}^R\!C(5)=\left(\!\!\!\begin{array}{cc}-2&1\\-3&1\end{array}\!\!\right), && {}^R\!G(5)=\left(\!\begin{array}{cc}1&1\\ \!\!-3&\!\!-2\end{array}\!\!\!\right), && \begin{cases}{}^R\!F_1(5)=1+3{}^R\!y_2+3{}^R\!y_2^2+{}^R\!y_2^3\\
  \quad\quad\quad\quad+3{}^R\!y_1{}^R\!y_2^2+2{}^R\!y_1{}^R\!y_2^3+{}^R\!y_1^2{}^R\!y_2^3\\ {}^R\!F_2(5)=1+2{}^R\!y_2+{}^R\!y_2^2+{}^R\!y_1{}^R\!y_2^2\end{cases}\\
  &{}^R\!C(6)=\left(\!\!\begin{array}{cc}2&-1\\3&-2\end{array}\!\!\right), && {}^R\!G(6)=\left(\!\!\begin{array}{cc}2&1\\ \!\!-3&\!\!-2\end{array}\!\!\!\right), && \begin{cases}{}^R\!F_1(6)=1+3{}^R\!y_2+3{}^R\!y_2^2+{}^R\!y_2^3+{}^R\!y_1{}^R\!y_2^3\\ {}^R\!F_2(6)=1+2{}^R\!y_2+{}^R\!y_2^2+{}^R\!y_1{}^R\!y_2^2\end{cases}\\
  &{}^R\!C(7)=\left(\!\!\!\begin{array}{cc}-1&1\\-3&2\end{array}\!\!\right), && {}^R\!G(7)=\left(\!\!\begin{array}{cc}2&1\\ \!\!-3&\!\!-1\end{array}\!\!\!\right), && \begin{cases}{}^R\!F_1(7)=1+3{}^R\!y_2+3{}^R\!y_2^2+{}^R\!y_2^3+{}^R\!y_1{}^R\!y_2^3\\ {}^R\!F_2(7)=1+{}^R\!y_2\end{cases}\\
  &{}^R\!C(8)=\left(\!\!\begin{array}{cc}1&0\\3&\!\!-1\end{array}\!\!\!\right), && {}^R\!G(8)=\left(\!\!\begin{array}{cc}1&1\\0&-1\end{array}\!\!\right), && \begin{cases}{}^R\!F_1(8)=1\\ {}^R\!F_2(8)=1+{}^R\!y_2\end{cases}\\
  &{}^R\!C(9)=\left(\!\!\begin{array}{cc}1&0\\0&1\end{array}\!\!\right), && {}^R\!G(9)=\left(\!\!\begin{array}{cc}1&0\\0&1\end{array}\!\!\right), && \begin{cases}{}^R\!F_1(9)=1\\ {}^R\!F_2(9)=1\end{cases}
 \end{align*}
 \caption{$C$-matrices, $G$-matrices, and $F$-polynomials for the same mutation sequence \eqref{eq:mut seq} applied to the seeds of $\RcA$.}
 \end{table}
\end{example}

\end{document}